\newcommand{\arxiv}[1]{\href{http://www.arXiv.org/abs/#1}{arXiv:#1}}
\newtheorem{theorem}{Theorem}[section]
\newtheorem{corollary}[theorem]{Corollary}
\newtheorem{definition}[theorem]{Definition}
\newtheorem{example}[theorem]{Example}
\newtheorem{lemma}[theorem]{Lemma}
\newtheorem{proposition}[theorem]{Proposition}
\newtheorem{remark}[theorem]{Remark}
\DeclareMathAlphabet{\mathbbold}{U}{bbold}{m}{n}
\def\supp{\operatorname{supp}}
\def\bzero{{\mathbbold 0}}
\def\bunity{{\mathbbold 1}}
\def\mysup{\vee}
\def\R{{\mathbb R}}
\def\Z{{\mathbb Z}}
\def\N{{\mathbb N}}
\def\T{{\mathbb T}}
\def\Rmax{\T}
\def\Rmaxplus{\R_{\max,+}}
\def\Rmaxtimes{\R_{\max,\times}}
\def\Rinv{\T_{+}}
\def\Rpn{\R_+^n}
\def\rec{\operatorname{rec}}
\def\ConicSec{{\cW}}
\def\CompConicSec{{\complement \ConicSec}}
\def\Sector{{\cS}}
\def\CompSector{{\complement \Sector}}
\def\cH{{\mathcal H}}
\def\cC{{\mathcal C}}
\def\cD{{\mathcal D}}
\def\cV{{\mathcal V}}
\def\cW{{\mathcal W}}
\def\cS{{\mathcal S}}
\def\cone{\operatorname{span}}
\def\conv{\operatorname{conv}}
\def\mysup{\bigoplus}
\def\bez{\backslash}
\newcommand{\uvector}[1]{{\rm e}^{#1}}
\subjclass[2010]{14T05, 52A01, secondary: 16Y60.
}
\begin{document}

\title{Characterization of tropical hemispaces by $(P,R)$-decompositions}

\author{Ricardo D. Katz}

\address{CONICET. Postal address: Instituto de Matem\'atica
``Beppo Levi'', Universidad Nacional de Rosario,
Avenida Pellegrini 250, 2000 Rosario, Argentina.}
\email{rkatz@fceia.unr.edu.ar}

\author{Viorel Nitica}

\address{Department of Mathematics, West Chester University, PA
19383, USA, and Institute of Mathematics, P.O. Box 1-764, Bucharest, Romania.}
\email{vnitica@wcupa.edu}

\author{Serge\u{\i} Sergeev}

\address{University of Birmingham,
School of Mathematics, Watson Building, Edgbaston B15 2TT, UK.}
\email{sergiej@gmail.com}

\thanks{R. D. Katz is partially supported by CONICET Grant PIP 112-201101-01026.
V. Nitica is supported by Simons Foundation Grant 208729. S. Sergeev
is supported by EPSRC grant EP/J00829X/1 and (partially) by
RFBR-CNRS grant 11-0193106 and RFBR grant 12-01-00886.}

\keywords{Tropical convexity, abstract convexity, max-plus algebra, hemispace, semispace, rank-one matrix}

\begin{abstract}
We consider tropical hemispaces, defined as tropically convex sets
whose complements are also tropically convex, and tropical
semispaces, defined as maximal tropically convex sets not containing
a given point. We introduce the concept of $(P,R)$-decomposition.
This yields (to our knowledge) a new kind of representation of tropically
convex sets extending the classical idea of representing convex sets
by means of extreme points and rays. We characterize
tropical hemispaces as tropically convex sets that admit a
$(P,R)$-decomposition of certain kind. In this characterization,
with each tropical hemispace we associate a matrix with coefficients
in the completed tropical semifield, satisfying an extended rank-one
condition. Our proof techniques are based on homogenization (lifting
a convex set to a cone), and the relation between tropical
hemispaces and semispaces.
\end{abstract}

\maketitle

\section{Introduction}

{\em Max-plus algebra} is the algebraic structure
obtained when considering the {\em max-plus semifield} $\Rmaxplus$.
This semifield is defined as the set $\R\cup \{-\infty\}$
endowed with $\alpha \oplus \beta:= \max(\alpha, \beta)$
as addition and the usual real numbers addition
$\alpha \otimes \beta:=\alpha +\beta$ as multiplication. Thus,
in the max-plus semifield,
the neutral elements for addition and
multiplication are $-\infty$ and $0$ respectively.

The max-plus semifield is algebraically isomorphic to the
{\em max-times semifield} $\Rmaxtimes$,
also known as the max-prod semifield (see e.g.~\cite{NS-07I,NS-07II}),
which is given by the set $\R_+=[0,+\infty)$
endowed with $\alpha\oplus \beta:= \max(\alpha, \beta)$
as addition and the usual real numbers product
$\alpha\otimes \beta:=\alpha \beta$ as multiplication. Consequently,
in the max-times semifield,
$0$ is the neutral element for addition and
$1$ is the neutral element for multiplication.

In this paper we consider both of these semifields at
the same time, under the common notation $\Rmax$ and under the common name {\em tropical algebra}.
In what follows $\Rmax$ denotes either the max-plus semifield $\Rmaxplus$ or the max-times semifield $\Rmaxtimes$.
We will use $\bzero$ to denote the neutral element for addition, $\bunity $ to denote the neutral element
for multiplication, and $\Rinv$ to denote the set of all
invertible elements with respect to the multiplication,
i.e., all the elements of $\Rmax$ different from $\bzero$.

The space $\Rmax^n$ of $n$-dimensional vectors $x=(x_1,\dots,x_n)$,
endowed naturally with the component-wise addition
(also denoted by $\oplus$) and $\lambda x:=(\lambda \otimes x_1,\dots,\lambda \otimes x_n)$
as the multiplication of a scalar $\lambda\in\Rmax$ by a vector $x$,
is a semimodule over $\Rmax$.
The vector $(\bzero, \dots, \bzero)\in \Rmax^n$ is also denoted by $\bzero$,
and it is the identity for $\oplus$.

In {\em tropical convexity}, one first defines the
{\em tropical segment} joining the points $x,y\in\Rmax^n$ as the set $\left\{\alpha x\oplus\beta y\in\Rmax^n\mid
\alpha,\beta\in\Rmax,\,\alpha\oplus\beta= \bunity\right\}$,
and then calls a set $\cC \subseteq \Rmax^n$ {\em tropically convex}
if it contains the tropical segment
joining any two of its points
(see Figure~\ref{f:segment-plane} below for an illustration of
tropical segments in dimension $2$).
Similarly, the notions of {\em cone, halfspace, semispace, hemispace, convex hull, linear span, convex and linear combination},
can be transferred to the tropical setting (precise definitions are given below).
Henceforth
all these terms used without precisions should always be understood in the max-plus or max-times (i.e. tropical) sense.

The interest in this convexity (also known as {\em max-plus convexity} when $\T=\Rmaxplus$, or
{\em max-times convexity} or {\em $\mathbb{B}$-convexity} when $\T=\Rmaxtimes $) comes from several fields,
some of which we next review.
Convexity in $\Rmax^n$ and in more general semimodules
was introduced by Zimmermann~\cite{Zim-77}
under the name ``extremal convexity'' with applications e.g. to discrete optimization problems and it was studied by
Maslov, Kolokoltsov, Litvinov, Shpiz and others as part of the
Idempotent Analysis~\cite{KM:97,LMS-01,MS:92},
inspired by the fact that the
solutions of a Hamilton-Jacobi equation associated with a deterministic
optimal control problem belong to structures similar to convex cones.
Another motivation arises from the algebraic approach to discrete event
systems initiated by Cohen et al.~\cite{CDQV-85},
since the reachable and observable
spaces of certain timed discrete event systems are naturally
equipped with structures of cones of $\Rmax^n$ (see e.g. Cohen et al.~\cite{cgq-1999}).
Motivated by tropical algebraic geometry and applications in phylogenetic analysis,
Develin and Sturmfels studied polyhedral convex sets in $\Rmax^n$
thinking of them as classical
polyhedral complexes~\cite{DS-04}.

Many results that are part of classical convexity
theory can be carried over to the setting of $\Rmax^n$:
separation of convex sets and projection operators (Gaubert and Sergeev~\cite{GS-08}),
minimization of distance and description of sets of best
approximation (Akian et al.~\cite{AGNS-11}), discrete convexity
results such as Minkowski theorem (Gaubert and Katz~\cite{GK-06,GK-07}),
Helly, Caratheodory and Radon theorems (Briec and Horvath~\cite{BH-04}), colorful Caratheodory
and Tverberg theorems (Gaubert and Meunier~\cite{GM-09}), to quote a few.

Here we investigate {\em hemispaces in $\Rmax^n$}, which are
convex sets in $\Rmax^n$ whose complements in $\Rmax^n$ are also convex.
The definition of hemispaces makes sense in other structures once the notion of convex set is defined. Hemispaces also appear in the
literature under the name of halfspaces, convex halfspaces,
and generalized halfspaces.
As general convex sets are quite complicated in many convexity structures,
a simple description of hemispaces is highly desirable.
Usual hemispaces in $\R^n$ are described by Lassak in~\cite{Lassak-84}.
Mart\'{\i}nez-Legaz and Singer~\cite{MLegSin-84}
give several geometric characterization of usual hemispaces in $\R^n$
with the aid of linear
operators and lexicographic order in $\R^n$.

Hemispaces play a role in abstract convexity (see Singer~\cite{Sin:97}, Van de Vel~\cite{VdV}),
where they are used in the Kakutani Theorem to separate
two convex sets from each other.
The proof of Kakutani Theorem makes use of
Zorn's Lemma (relying on the Pasch axiom,
which holds both in tropical
and usual convexity).
A different approach is to start from the
separation of a point from a closed convex set,
as investigated in many works (e.g., Zimmermann~\cite{Zim-77},
Litvinov~et~al.~\cite{LMS-01},
Cohen~et~al.~\cite{CGQ-04,CGQS-05},
Develin and Sturmfels~\cite{DS-04}, Briec~et~al.~\cite{BHR-05}).
This Hahn-Banach type result is extended to the
separation of several convex sets by an application of
non-linear Perron-Frobenius theory by Gaubert and Sergeev in~\cite{GS-08}.

In the Hahn-Banach approach, tropically convex sets
are separated by means of closed halfspaces in $\Rmax^n$,
defined as sets of vectors
$x$ in $\Rmax^n$ satisfying an inequality of the form
$\mysup_j \gamma_jx_j\oplus \alpha\leq \mysup_i \beta_ix_i\oplus\delta$.
As shown by Joswig~\cite{Jos-05},
closed halfspaces in $\Rmax^n$
are unions of several closed sectors, which are convex tropically and in the ordinary sense.

Briec and Horvath~\cite{BH-08} proved that the topological
closure of any  hemispace in $\Rmax^n$
is a closed halfspace in $\Rmax^n$.
Hence closed halfspaces,
with respect to general hemispaces,
are ``almost everything''. However,
the borderline between a hemispace
and its complement in $\Rmax^n$ has a generally unknown intricate pattern,
with some pieces belonging to
one hemispace and the rest to the other.
This pattern was not
revealed by Briec and Horvath.

The present paper gives a complete characterization of hemispaces in
$\Rmax^n$ by means of the so-called $(P,R)$-decompositions (see
Definition~\ref{def:PR} below). In dimension 2 the borderline is described
explicitly and all the types of hemispaces in $\Rmax^2$ that may
appear are shown in Figures~\ref{f:hemispace-plane}
and~\ref{f:hemispace-line}. Thus, our result is more general than
the one established in~\cite{BH-08} even in dimension 2. In higher
dimensions one may use the characterization in terms of
$(P,R)$-decompositions to describe the thin structure of the
borderline quite explicitly.

We now describe the basic idea of the proof of this characterization.
Let us first recall that like in usual convexity, a closed convex set in
$\Rmax^n$ can be decomposed as the (tropical) Minkowski sum of the convex hull of its extreme points and its recession cone (Gaubert and Katz~\cite{GK-06,GK-07}). As a relaxation of this traditional
approach, we suggest the concept of $(P,R)$-decomposition to
describe general convex sets in $\Rmax^n$.
Developed here in the context of tropical convexity,
this concept corresponds to that of Motzkin decomposition studied in usual convexity in locally convex spaces
(see e.g.~\cite{GGMT-10}).
Homogenization,
which carries convex sets to convex cones, is another classical tool
we exploit in the setting of $\Rmax^n$. Next, an important feature of
tropical convexity (as opposed to usual convexity) is the existence
of a finite number of types of semispaces, i.e., maximal
convex sets in $\Rmax^n$ not containing a given point. These sets were described
in detail by Nitica and Singer~\cite{NS-07I,NS-07II,NS-07}, who
showed that they are precisely the complements of closed sectors.
Let us mention that the multiorder principle of tropical
convexity~\cite{NS-07I,NS-07II,Ser-09-inLS,multi-order-1991}
can be formulated in terms of complements of semispaces.

It follows from abstract convexity that any hemispace
is the union of all the complements of semispaces which it contains.
These sets are closed sectors of several types.
The convex hull in $\Rmax^n$ of a union of sectors
of certain type gives a sector of the same type,
perhaps with some pieces of the boundary missing.
Some degenerate cases may also appear.
Sectors admit a (relatively) simple $(P,R)$-decomposition,
and we can combine such $(P,R)$-decompositions to obtain a
$(P,R)$-decomposition of the hemispace.
So far the method is quite general and geometric,
and in dimension $2$ sufficient for classification.

For higher dimensions the fact that we deal with hemispaces becomes
relevant. It turns out that a hemispace in $\Rmax^n$  admits a
$(P,R)$-decomposition consisting of unit vectors and linear
combinations of two unit vectors. Thus, to characterize a hemispace
by means of $(P,R)$-decompositions we need to understand how the
linear combinations of two unit vectors are distributed
among the hemispace and its complement. The proof becomes more
algebraic and combinatorial, and at this point it becomes convenient
to work with cones and their (usual) representation in
terms of generators. Using homogenization, we reduce the study
of general hemispaces in $\Rmax^n$ to the study of conical
hemispaces in $\Rmax^{n+1}$ (these are hemispaces in $\Rmax^{n+1}$
which are also cones or, equivalently, cones in
$\Rmax^{n+1}$ whose complements enlarged with $\bzero$ are also cones).
We introduce the ``$\alpha$-matrix'', whose entries
stem from the borderline between a conical hemispace and its
complement in two-dimensional coordinate planes. We show that it
satisfies an extended rank-one condition, and then we prove that
this condition is also sufficient in order for a set to
generate a conical hemispace. This part of the proof is more
technical and it is given in the last third of the paper, starting
with Proposition~\ref{p:RD} and ending with the proof of
Theorem~\ref{t:Ghemi}. We use the rank-one condition to describe the
fine structure of the $\alpha$-matrix, which is an independent
combinatorial result of interest, and then use this structure to
construct explicitly the complementary conical hemispace for a
conical hemispace given by its $(P,R)$-decomposition. Finally, we translate
this result back to the $(P,R)$-decomposition of general hemispaces,
to obtain the main result of the paper (Theorem~\ref{t:mainres}).

The paper is organized as follows. Section~\ref{s:prel} is occupied
with preliminaries on convex sets in $\Rmax^n$, and introduces the concept of
$(P,R)$-decomposition. In Section~\ref{s:semi} we study semispaces
in $\Rmax^n$, in order to give, exploiting homogenization, a simpler
proof of their characterization than the one given in~\cite{NS-07I,NS-07II}.
Hemispaces appear here as unions of (in general, infinitely many)
complements of semispaces, i.e., the closed sectors of~\cite{Jos-05}.
Section~\ref{s:hemi} contains the main
results on hemispaces in $\Rmax^n$. The purpose of
Subsection~\ref{ss:hemihomog} is to reduce general hemispaces in
$\Rmax^n$ to conical hemispaces in $\Rmax^{n+1}$.
This aim is finally achieved in Theorem~\ref{t:hemihomog}. In view
of this theorem, in Subsection~\ref{ss:hemicone} we
study conical hemispaces only. There we prove Theorem~\ref{t:Ghemi}
as explained above, which gives a concise characterization of
conical hemispaces in terms of generators. In
Subsection~\ref{ss:final}, we obtain a number of corollaries of the
previous results. In the first place we verify that closed
hemispaces in $\Rmax^n$ are closed halfspaces in $\Rmax^n$, a result
of~\cite{BH-08}, see Theorem~\ref{t:hemishalfs} and
Corollary~\ref{c:hemishalfs}. Finally, the main result of this paper
is given in Theorem~\ref{t:mainres} of Subsection~\ref{ss:mainres}.
It provides a characterization of general hemispaces in $\Rmax^n$ as
convex sets having particular $(P,R)$-decompositions, and
is obtained as a combination of Theorems~\ref{t:hemihomog}
and~\ref{t:Ghemi}.

\section{Preliminaries}\label{s:prel}

In the sequel, for any $m,n\in \Z$ with $m \leq n$,
we denote the set $\{m,m+1,\dots,n\}$ by $[m,n]$, or simply by $[n]$ when $m=1$.
The multiplicative inverse of $\lambda \in \Rinv$ (recall that $\Rinv:=\Rmax\backslash\{\bzero\}$) will be denoted
by $\lambda^{-1}$.
For $x\in \Rmax^n$ we define the {\em support} of $x$ by
\[
\supp(x):=\left\{i\in [n]\mid x_i\neq \bzero \right\} \; .
\]
We will say that $x\in \Rmax^n$ has \emph{full support} if
$\supp(x)=[n]$. Otherwise we say that $x$ has \emph{non-full
support}.

The set of the vectors $\{\uvector{i,n}\mid i\in [n]\}\subseteq \Rmax^n$
defined by
\begin{equation*}
\uvector{i,n}_j=\left \{ \begin{gathered}
\bunity \text{ if } i=j \\
\bzero \text{ if } i\not = j
\end{gathered}
\right .
\end{equation*}
form the {\em standard basis} in $\Rmax^n$.
We will refer to these vectors as the {\em unit vectors}.
In what follows, we will work with unit vectors in both $\Rmax^n$ and $\Rmax^{n+1}$. For simplicity of the notation, we identify $\uvector{i,n}$ with $\uvector{i,n+1}$ for $i\leq n$, and write simply $\uvector{i}$ for them.

To introduce a topology we need to specialize $\Rmax$ to one of the models.
Namely, if $\Rmax=\Rmaxtimes$ then we use the topology induced in $\Rpn$
by the usual Euclidean topology in the real space.
If $\Rmax=\Rmaxplus$, then our topology is induced by the metric
$d_{\infty}(x,y)=\max_{i\in [n]} |e^{x_i}-e^{y_i}|$.
Note that the max-plus and max-times semifields are isomorphic.

\subsection{Tropical cones and tropically convex sets: $(P,R)$-decomposition and homogenization}

We begin by recalling the definition of cones and by describing some relations between them and convex sets.

\begin{definition}
A set $\cV \subseteq \Rmax^n$ is called a
{\rm  (tropical) cone} if it is closed under (tropical)
addition and multiplication by scalars.  A cone $\cV$ in $\Rmax^n$ is said to be {\rm non-trivial} when $\cV\neq \{ \bzero \}$ and $\cV\neq \Rmax^n$.
\end{definition}

\begin{definition}
For $P,R\subseteq \Rmax^n$, we define the {\rm  (tropical) convex hull}
of $P$ to be:
\[
\conv (P):=\left\{\mysup_{y\in P} \lambda_y y\mid \lambda_y\in \Rmax \makebox{ for } y\in P\makebox{ and } \mysup_{y\in P}\lambda_y=\bunity \right\}
\]
and the {\rm (tropical) linear span} of $R$ or {\rm cone} generated by $R$ to be:
\[
\cone (R):=\left\{\mysup_{y\in R} \lambda_y y \mid  \lambda_y\in \Rmax \makebox{ for } y\in R \right\}\; ,
\]
where in both cases only a finite number of the
scalars $\lambda_y$ is not equal to $\bzero$. We will also consider the
{\rm (tropical) Minkowski sum} of
$\conv(P)$ and $\cone(R)$, which is
\[
\conv(P)\oplus\cone(R):=\left\{x\oplus y\mid x\in \conv(P),\; y\in \cone(R) \right\} \; .
\]
\end{definition}

Observe that $\cone(R)$ always contains the null vector $\bzero$, but $\conv(P)$ does not contain it in general.
For this reason, we always have $\conv(P)\subseteq\conv(P)\oplus\cone(R)$ and we do not
always have $\cone(R)\subseteq \conv(P)\oplus\cone(R)$.

\begin{definition}\label{def:PR}
Let $P,R\subseteq\Rmax^n$. If for a convex set
$\cC\subseteq\Rmax^n$ we have
\begin{equation}\label{e:GRbrief}
\cC=\conv(P)\oplus\cone(R)\; ,
\end{equation}
then~\eqref{e:GRbrief} is called a {\rm $(P,R)$-decomposition} of
$\cC$.
\end{definition}

For each convex set $\cC\subseteq\Rmax^n$ at least one decomposition
of the form~\eqref{e:GRbrief} exists: just take $P=\cC$ and
$R=\emptyset$. A canonical decomposition of the
form~\eqref{e:GRbrief} can be written for closed convex sets, by the
tropical analogue of Minkowski theorem, due to Gaubert and
Katz~\cite{GK-06,GK-07}.

\begin{definition}\label{def:hom}
For $C\subseteq\Rmax^n$, the set
\[
V_C=\{(\lambda x_1,\ldots,\lambda x_n, \lambda)\mid (x_1,\ldots,x_n)\in C, \lambda\in \Rmax\}\subset \Rmax^{n+1}
\]
is called the {\rm homogenization} of $C$.

For $x=(x_1,\ldots,x_n)\in\Rmax^n$, by abuse of notation, we shall
also denote the vector $(\lambda x_1,\ldots,\lambda x_n,
\lambda)\in\Rmax^{n+1}$ by $(\lambda x, \lambda)$, that is, we shall
use the identification of $\Rmax^{n+1}$ with $\Rmax^{n}\times \Rmax$
by the isomorphism $(z_1 , \dots, z_n , z_{n+1})\to ((z_1,
\dots ,z_n),z_{n+1})$. Thus we have $(\lambda x,\lambda)_i=\lambda
x_i$ for $i\in[n]$ and $(\lambda x,\lambda)_{n+1}=\lambda$.
\end{definition}

\begin{remark}
If $\cC\subseteq\Rmax^n$ is a convex set, then its homogenization $V_\cC\subseteq\Rmax^{n+1}$ is a cone.
A proof can be found in \cite[Lemma 2.12]{GK-07}.
\end{remark}

Reversing the homogenization means taking a section of a cone by a coordinate plane.
Below we take only sections of cones in $\Rmax^{n+1}$ by $x_{n+1}=\alpha$ (mostly with $\alpha=\bunity$),
and not by $x_i=\alpha$ with $i\in [n]$.

\begin{definition}\label{def:dehom}
For $\cV \subseteq\Rmax^{n+1}$ and $\alpha\in\Rmax$, the set
\begin{equation}
C^{\alpha}_{\cV}=\left\{x\in \Rmax^n\mid (x,\alpha)\in \cV\right\}
\end{equation}
is called a {\rm coordinate section} of $\cV$ by $x_{n+1}=\alpha$.
\end{definition}

Equivalently, the coordinate section of $\cV\subset \Rmax^{n+1}$ by
$x_{n+1}=\alpha$ is the image in
$\Rmax^{n}$ of $\cV\cap \{ x\in\Rmax^{n+1} \mid x_{n+1} = \alpha \}$ under the map
$(x_1,\dots,x_n,x_{n+1})\to (x_1,\dots,x_n)$.

The following property of coordinate section is standard (the proof is given for the reader's convenience).

\begin{proposition}\label{p:section}
Let $\cV \subseteq\Rmax^{n+1}$ be closed under
multiplication by scalars, and take any $\alpha\neq\bzero$. Then
$C_{\cV}^{\alpha}=\{\alpha x\mid x\in C_{\cV}^{\bunity}\}.$
\end{proposition}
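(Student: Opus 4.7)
The plan is a short double-inclusion argument that just uses the invertibility of $\alpha$ in $\Rinv$ and the closure of $\cV$ under scalar multiplication. Nothing deeper is needed, which matches the author's remark that the proof is standard.

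For the inclusion $\{\alpha x \mid x \in C_{\cV}^{\bunity}\}\subseteq C_{\cV}^{\alpha}$, I would take any $x\in C_{\cV}^{\bunity}$, so by definition $(x,\bunity)\in \cV$, and then multiply by the scalar $\alpha$. Since $\cV$ is closed under multiplication by scalars and by the notational convention from Definition~\ref{def:hom} we have $\alpha(x,\bunity)=(\alpha x,\alpha)$, this shows $(\alpha x,\alpha)\in \cV$, i.e.\ $\alpha x\in C_{\cV}^{\alpha}$.

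For the reverse inclusion $C_{\cV}^{\alpha}\subseteq \{\alpha x \mid x \in C_{\cV}^{\bunity}\}$, I would take any $y\in C_{\cV}^{\alpha}$, so $(y,\alpha)\in\cV$. Here is the only place where $\alpha\neq\bzero$ is used: since $\alpha\in\Rinv$, its multiplicative inverse $\alpha^{-1}$ exists. Closure of $\cV$ under scalar multiplication gives $\alpha^{-1}(y,\alpha)=(\alpha^{-1}y,\bunity)\in\cV$, so $\alpha^{-1}y\in C_{\cV}^{\bunity}$. Writing $y=\alpha(\alpha^{-1}y)$ exhibits $y$ in the required form.

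There is no real obstacle; the only subtlety worth flagging is the reliance on $\alpha\neq\bzero$ in the second inclusion (if $\alpha=\bzero$ one cannot recover $(y,\bunity)$ from $(y,\bzero)$, which is why the hypothesis is stated). The identification $\alpha(x,\lambda)=(\alpha x,\alpha\lambda)$ from Definition~\ref{def:hom} is used implicitly in both directions.
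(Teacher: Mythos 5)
Your proof is correct and follows essentially the same double-inclusion argument as the paper: the forward inclusion by scaling a point of $C_{\cV}^{\bunity}$ by $\alpha$, and the reverse by scaling a point of $C_{\cV}^{\alpha}$ by $\alpha^{-1}$, which is exactly where the hypothesis $\alpha\neq\bzero$ is used in both treatments.
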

\begin{proof}
If $x\in C_{\cV}^{\bunity}$ then $(x,\bunity)\in\cV$ and hence
$(\alpha x,\alpha)\in\cV$ and $\alpha x\in C_{\cV}^{\alpha}$. Thus
$\{\alpha x\mid x\in C_{\cV}^{\bunity}\}\subseteq C_{\cV}^{\alpha}$.
Similarly, $\{\alpha^{-1} x\mid x\in C_{\cV}^{\alpha}\}\subseteq
C_{\cV}^{\bunity}$. This implies $C_{\cV}^{\alpha}\subseteq \{\alpha x\mid
x\in C_{\cV}^{\bunity}\}$. (Indeed, if $x\in C_{\cV}^{\alpha}$ then
$\alpha^{-1}x\in C_{\cV}^{\bunity}$, and we have $x=\alpha y$ where
$y=\alpha^{-1}x\in C_{\cV}^{\bunity}$.)
\end{proof}

Let us write out a $(P,R)$-decomposition of a section of a cone
generated by a set $U\subseteq\Rmax^{n+1}$.

\begin{proposition}\label{p:gen-hom}
If $U\subseteq\Rmax^{n+1}$, $\cV=\cone(U)$
and the coordinate section $C^{\bunity}_{\cV}$ is non-empty,
then
\[
C^{\bunity}_{\cV}=\conv(P_U)\oplus\cone(R_U)
\]
where
\begin{equation}\label{gxrx-def}
P_U:=\{y\in \Rmax^n\mid \exists\mu \neq\bzero, (\mu y,\mu)\in U\}\; \makebox{ and }\;  R_U:=\{z\in \Rmax^n\mid (z,\bzero)\in U\} \; .
\end{equation}
\end{proposition}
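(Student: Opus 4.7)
The plan is to prove both inclusions directly by unpacking the definitions and using the fact that every element of $\cV=\cone(U)$ is a tropical linear combination of finitely many elements of $U$. The key idea is to partition $U$ into the vectors whose last coordinate is invertible (which, after rescaling, correspond to elements of $P_U$) and the vectors whose last coordinate is $\bzero$ (which correspond to elements of $R_U$), and to read off the constraint $\bigoplus \alpha_y = \bunity$ from the $(n+1)$-th coordinate.

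For the inclusion $C^{\bunity}_{\cV}\subseteq \conv(P_U)\oplus\cone(R_U)$, I would take $x\in C^{\bunity}_{\cV}$, so $(x,\bunity)\in\cV=\cone(U)$, and write
\[
(x,\bunity)=\mysup_{u\in U_1}\lambda_u u \oplus \mysup_{u\in U_0}\lambda_u u,
\]
where $U_1=\{u\in U\mid u_{n+1}\neq\bzero\}$, $U_0=\{u\in U\mid u_{n+1}=\bzero\}$, and only finitely many of the scalars $\lambda_u$ are nonzero. For each $u\in U_1$ with $\lambda_u\neq\bzero$, setting $\mu_u:=u_{n+1}$ and $y_u:=\mu_u^{-1}(u_1,\dots,u_n)$, I would rewrite $u=(\mu_u y_u,\mu_u)$ with $y_u\in P_U$; and for each $u\in U_0$ I would write $u=(z_u,\bzero)$ with $z_u\in R_U$. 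Reading off the $(n+1)$-th coordinate of the display gives $\bigoplus_{u\in U_1}\lambda_u\mu_u=\bunity$, while reading off the first $n$ coordinates yields
\[
x=\mysup_{u\in U_1}(\lambda_u\mu_u)\, y_u \oplus \mysup_{u\in U_0}\lambda_u z_u,
\]
where the first sum lies in $\conv(P_U)$ (by the constraint on the coefficients) and the second in $\cone(R_U)$.

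For the reverse inclusion, I would start with $x=\bigoplus_{y\in P_U}\alpha_y y\oplus\bigoplus_{z\in R_U}\beta_z z$ where $\bigoplus_y\alpha_y=\bunity$ and only finitely many coefficients are nonzero. For each $y\in P_U$ with $\alpha_y\neq\bzero$, I would pick, by definition of $P_U$, a scalar $\mu_y\neq\bzero$ with $(\mu_y y,\mu_y)\in U$, and assign to this generator the scalar $\alpha_y\mu_y^{-1}$; for each $z\in R_U$ with $\beta_z\neq\bzero$, I would assign to the generator $(z,\bzero)\in U$ the scalar $\beta_z$. The resulting finite tropical combination of elements of $U$ gives exactly $(x,\bunity)$, upon checking the first $n$ coordinates and the last coordinate (the latter uses $\bigoplus_y\alpha_y=\bunity$). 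Hence $(x,\bunity)\in\cV$ and $x\in C^{\bunity}_{\cV}$.

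There is no real obstacle here, since the argument is just definition-chasing; the only points to be slightly careful about are (i) the finiteness condition on the supports of the coefficients, which is preserved in both directions, and (ii) the use of the hypothesis $C^{\bunity}_{\cV}\neq\emptyset$, which ensures that in the forward direction the sum over $U_1$ is genuinely present (so that $\bigoplus_{u\in U_1}\lambda_u\mu_u=\bunity$ is attainable) and therefore that $\conv(P_U)$ is nonempty. The proof thus reduces essentially to the observation that rescaling elements $u\in U$ with $u_{n+1}\neq\bzero$ by $u_{n+1}^{-1}$ identifies them with their images in $P_U$, while elements with $u_{n+1}=\bzero$ contribute only to the recession part $R_U$.
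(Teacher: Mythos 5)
Your proposal is correct and follows essentially the same route as the paper's proof: split $U$ according to whether the last coordinate is $\bzero$ or invertible, rescale the latter to identify them with elements of $P_U$, and read the normalization $\bigoplus\lambda_u\mu_u=\bunity$ off the $(n+1)$-th coordinate in one direction while lifting the convex combination to $(x,\bunity)$ in the other. No gaps; the finiteness and nonemptiness remarks you make are exactly the points the paper also handles implicitly.
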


\begin{proof}
Let us represent
\begin{align*}
U &
=\left\{ (u,u_{n+1})\in U\mid u\in \Rmax^n, u_{n+1} \neq \bzero\right\} \cup \left\{ (u,u_{n+1})\in U\mid u\in \Rmax^n, u_{n+1} = \bzero \right\} \\
& = \left\{ (\mu_y y,\mu_y)\in \Rmax^{n+1}\mid (\mu_y y,\mu_y)\in U,\; \mu_y \neq \bzero\right\} \cup \left\{(z,\bzero)\in\Rmax^{n+1}\mid (z,\bzero)\in U\right\}  \; .
\end{align*}
If $x\in C^{\bunity}_{\cV}$, i.e. $(x,\bunity) \in \cV =\cone (U)$, we have
\[
(x,\bunity) = \mysup_{(\mu_y y,\mu_y)\in U,\; \mu_y \neq \bzero} \lambda_y (\mu_y y,\mu_y)\oplus
\mysup_{(z,\bzero)\in U}\lambda_z (z,\bzero)
\]
for some $\lambda_y,\lambda_z\in\Rmax$, with only a finite number of
$\lambda_y,\lambda_z$ not equal to $\bzero$. Thus,
\[
\mysup_{(\mu_y y,\mu_y)\in U,\; \mu_y \neq \bzero}\lambda_y \mu_y =\bunity\;
\makebox{ and }\;x = \mysup_{(\mu_y y,\mu_y)\in U,\; \mu_y \neq \bzero}
\lambda_y \mu_y y \oplus \mysup_{(z,\bzero)\in U}\lambda_z z \; .
\]
It follows that  $x\in\conv(P_U)\oplus\cone(R_U)$.

Conversely, if $x\in\conv(P_U)\oplus\cone(R_U)$, we have
\[
x = \mysup_{y\in P_U} \lambda_y y \oplus \mysup_{z\in R_U}\lambda_z z \;
\]
for some $\lambda_y,\lambda_z\in\Rmax$, with $\mysup_{y\in P_U}\lambda_y =\bunity$ and only a finite number of
$\lambda_y,\lambda_z$ not equal to $\bzero$. Then,
\[
(x,\bunity) = \mysup_{y\in P_U} \lambda_y (y,\bunity) \oplus \mysup_{z\in R_U}\lambda_z (z,\bzero) \; .
\]
Since $(y,\bunity)\in \cV$ for $y\in P_U$ and $(z,\bzero)\in\cV$ for
$z\in R_U$, we conclude that $(x,\bunity)\in \cV$, and so $x\in
C^{\bunity}_{\cV}$.
\end{proof}

\begin{corollary}\label{CoroOfp:gen-hom}
Let $\cH=\conv (P) \oplus \cone (R)$, where $P,R\subset \Rmax^n$. Then,
if we define
\[
\cV:=\cone \left( \left\{ (x, \bunity) \mid x \in P\right\} \cup
\left\{ (y, \bzero) \mid y \in R\right\} \right) \; ,
\]
we have $C_{\cV}^{\bunity}=\cH$.
\end{corollary}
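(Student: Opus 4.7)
The plan is to recognize this as a direct unwinding of Proposition~\ref{p:gen-hom} applied to the specific generating set $U := \{(x,\bunity) \mid x \in P\} \cup \{(y,\bzero) \mid y \in R\}$, so that $\cV = \cone(U)$ matches the statement. First I would apply the proposition to this $U$; the only thing to check is that the sets $P_U$ and $R_U$ defined in~\eqref{gxrx-def} coincide with $P$ and $R$ respectively. The inclusions $P \subseteq P_U$ and $R \subseteq R_U$ are immediate by taking $\mu = \bunity$ in the definition of $P_U$, and reading off elements $(y,\bzero) \in U$ for $R_U$.

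For the reverse inclusions, I would use the fact that every element of $U$ has last coordinate equal to either $\bunity$ or $\bzero$. Hence if $y \in P_U$ so that $(\mu y, \mu) \in U$ for some $\mu \neq \bzero$, then necessarily $\mu = \bunity$, which forces $(y,\bunity) \in U$ and therefore $y \in P$. Likewise, any $z \in R_U$ satisfies $(z,\bzero) \in U$, and since the first part of $U$ has last coordinate $\bunity$, we must have $z \in R$. Once these equalities are in hand, Proposition~\ref{p:gen-hom} immediately gives
\[
C_{\cV}^{\bunity} = \conv(P_U) \oplus \cone(R_U) = \conv(P) \oplus \cone(R) = \cH.
\]

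The only subtle point, and arguably the main (though minor) obstacle, is that Proposition~\ref{p:gen-hom} is stated under the hypothesis that $C^{\bunity}_{\cV}$ is non-empty. I would dispose of this by separating two cases. If $P \neq \emptyset$, pick any $x \in P$; then $(x,\bunity) \in U \subseteq \cV$, so $x \in C_{\cV}^{\bunity}$ and the proposition applies. If $P = \emptyset$, then on one hand $\conv(P) = \emptyset$ by the definition given in Section~\ref{s:prel} (since no choice of scalars can satisfy $\mysup_{y\in\emptyset}\lambda_y = \bunity$), whence $\cH = \emptyset$; on the other hand, $U \subseteq \Rmax^n \times \{\bzero\}$, so every vector in $\cV = \cone(U)$ has last coordinate $\bzero$, giving $C_{\cV}^{\bunity} = \emptyset$ as well. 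Both sides agree, and the corollary is proved.
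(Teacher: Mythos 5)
Your proof is correct and follows essentially the same route as the paper's: apply Proposition~\ref{p:gen-hom} to the set $U$ and verify $P_U=P$, $R_U=R$ by the same case analysis on the last coordinate. Your extra care in checking the non-emptiness hypothesis of Proposition~\ref{p:gen-hom} (via the case $P=\emptyset$, where both sides are empty) is a point the paper's proof silently skips, and it is handled correctly.
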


\begin{proof}
Let
\begin{equation}
\label{e:Udef}
U:=\left\{ (x, \bunity) \mid x \in P\right\} \cup \left\{ (y,
\bzero) \mid y \in R\right\}.
\end{equation}
Then, by Proposition~\ref{p:gen-hom}, we have
$C^{\bunity}_{\cV}=\conv(P_U)\oplus\cone(R_U)$, where $P_U$ and
$R_U$ are defined by~\eqref{gxrx-def}. With $U$ given
by~\eqref{e:Udef}, we have $P_U=P$ and $R_U=R$.

Indeed, let $y\in P_{U}.$ If $(\mu y,\mu )=(x,\bunity)$, with $\mu \neq \bzero$ and $x\in P$,
then $\mu =\bunity$ and $\mu y=x$, whence $y=x\in P$. On
the other hand, the relation $(\mu y,\mu )=(z,\bzero)$, with $\mu \neq \bzero$ and $z\in R$,
is impossible. Thus $P_{U}\subseteq P$. Conversely, if $y\in P$, then taking $\mu =\bunity,$ we have $(\mu
y,\mu )=(y,\bunity),$ so $y\in P_{U}.$ Thus $P\subseteq P_{U},$ which proves that $%
P_{U}=P.$

Now let $z\in R_{U}.$ Then $(z,\bzero)\in U$.
If $(z,\bzero)=(y,\bzero)$, with $y\in R$, then $z=y\in R$. On the other hand,
the relation $(z,\bzero)=(x,\bunity)$, with $x \in P$ is impossible.
Thus $R_{U}\subseteq R.$ Conversely, if $z\in R$, then $(z,\bzero)\in U$ by~\eqref{e:Udef}. Thus
$R\subseteq R_{U}$, which proves that $R_{U}=R$.

Hence $C_{\cV}^{\bunity}=\conv (P_U) \oplus \cone (R_U)=\conv (P) \oplus \cone (R)=\cH$.
\end{proof}

\subsection{Recessive elements}

We will use the following notions of recessive elements:

\begin{definition}\label{def:2.9}
Let $\cC\subseteq \Rmax^n$ be a convex set.
\begin{enumerate}[(i)]
\item Given $x\in \cC$, the set of {\rm recessive elements at $x$},
or {\rm locally recessive elements at $x$}, is defined as
\[
\rec_x \cC := \{z\in \Rmax^n\mid x\oplus \lambda z\in \cC \makebox{ for all } \lambda \in \Rmax\}\; .
\]
\item The set of {\rm globally recessive elements} of $\cC$, denoted by $\rec \cC$,
consists of the elements that are recessive at each element of $\cC$.
\end{enumerate}
\end{definition}

There is a close relation between recessive elements and
$(P,R)$-decompositions.

\begin{lemma}\label{lemma:new-lemma93}
If $\cC=\conv(P)\oplus\cone(R)$ as in~\eqref{e:GRbrief},
then $R\subseteq\rec \cC$.
\end{lemma}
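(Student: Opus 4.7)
The plan is to unfold the definitions and exploit the fact that $\cone(R)$ is closed under addition and scalar multiplication. Fix an arbitrary $z \in R$; I want to show $z$ is recessive at every $x \in \cC$, i.e., $x \oplus \lambda z \in \cC$ for all $\lambda \in \Rmax$.

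First I would take an arbitrary $x \in \cC$ and use the hypothesis $\cC = \conv(P) \oplus \cone(R)$ to decompose it as $x = u \oplus v$ with $u \in \conv(P)$ and $v \in \cone(R)$. Then for any scalar $\lambda \in \Rmax$, I rewrite
\[
x \oplus \lambda z = u \oplus (v \oplus \lambda z).
\]
Since $z \in R$, the element $\lambda z$ lies in $\cone(R)$ by definition of the linear span (it is the trivial combination with coefficient $\lambda$ on $z$ and $\bzero$ elsewhere). Because $\cone(R)$ is closed under tropical addition, $v \oplus \lambda z \in \cone(R)$. Hence $x \oplus \lambda z \in \conv(P) \oplus \cone(R) = \cC$, which shows $z \in \rec_x \cC$. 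Since $x$ was arbitrary, $z \in \rec \cC$.

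There is no real obstacle here; the statement is essentially a direct verification from Definitions~\ref{def:PR} and~\ref{def:2.9}, relying only on the fact that $\cone(R)$ is itself a cone (stable under $\oplus$ and scalar multiplication), which is immediate from its description as the set of finite tropical linear combinations of elements of $R$.
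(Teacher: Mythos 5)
Your proof is correct and follows essentially the same route as the paper's: decompose $x=u\oplus v$ with $u\in\conv(P)$, $v\in\cone(R)$, regroup $x\oplus\lambda z=u\oplus(v\oplus\lambda z)$, and use that $\cone(R)$ is closed under $\oplus$ and scalar multiplication. Nothing is missing.
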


\begin{proof}
Let $z\in R$. If $x\in \cC$, we have $x=p\oplus r$ for some $p\in \conv(P)$ and $r\in \cone(R)$. Then,
\[
x\oplus \lambda z=p\oplus (r\oplus \lambda z)\in \conv(P)\oplus \cone(R)=\cC \; ,
\]
for any $\lambda \in \Rmax$, because $r\oplus \lambda z \in \cone(R)$ as a consequence of fact that $\cone(R)$ is a cone. Since this holds for any $x\in \cC$ and $\lambda \in \Rmax$, we conclude that $z\in \rec \cC$.
\end{proof}

For closed convex sets, every locally recessive element is globally recessive:

\begin{proposition}[Gaubert and Katz~\cite{GK-07}]\label{p:GK}
If a convex set $\cC\subseteq \Rmax^n$ is closed,
then $\rec_x \cC\subseteq\rec \cC$ for all $x\in \cC$.
\end{proposition}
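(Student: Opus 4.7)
The plan is to show that for a closed convex set $\cC$, any $z$ recessive at a single point $x\in\cC$ is automatically recessive at every other point $y\in\cC$. So fix $x\in\cC$ and $z\in\rec_x\cC$; I want to prove that for every $y\in\cC$ and every $\mu\in\Rmax$ the point $y\oplus \mu z$ lies in $\cC$. The cases $\mu=\bzero$ and $z=\bzero$ are immediate, so I may assume $\mu\neq\bzero$ (and in particular $\mu\in\Rinv$).

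The main idea is to approach $y\oplus\mu z$ as a limit of tropical convex combinations of the two points $y$ and $x\oplus\lambda z$, both of which belong to $\cC$ (the second by the hypothesis $z\in\rec_x\cC$, for any $\lambda\in\Rmax$). Specifically, I will use scalars
\[
\alpha=\bunity,\qquad \beta=\mu\lambda^{-1},
\]
which satisfy $\alpha\oplus\beta=\bunity$ as soon as $\lambda$ is chosen so that $\mu\lambda^{-1}\leq\bunity$ (i.e.\ $\lambda$ sufficiently large in $\Rinv$). For such $\lambda$, tropical convexity of $\cC$ yields
\[
\alpha y \oplus \beta(x\oplus\lambda z)
= y\oplus(\mu\lambda^{-1})x\oplus \mu z \;\in\;\cC.
\]

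Now I let $\lambda\to +\infty$ in the real sense (corresponding to $\lambda\to+\infty$ in both $\Rmaxplus$ and $\Rmaxtimes$). In either topology on $\Rmax$, the scalar $\mu\lambda^{-1}$ tends to $\bzero$, and hence the vector $(\mu\lambda^{-1})x$ tends to $\bzero\in\Rmax^n$ componentwise. Consequently the displayed element converges to $y\oplus\bzero\oplus\mu z=y\oplus\mu z$. Since $\cC$ is closed, the limit lies in $\cC$, proving $z\in\rec\cC$.

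The only step that requires a little care is the convergence of $(\mu\lambda^{-1})x$ to $\bzero$; this is a direct check in each of the two models of $\Rmax$ using the explicit metric / topology introduced in Section~\ref{s:prel}, and I do not expect any further obstacle. Closedness of $\cC$ is used exactly once, at the very end, to pass to the limit; without it the argument produces only points arbitrarily close to $y\oplus\mu z$, which accounts for why the statement genuinely requires $\cC$ to be closed.
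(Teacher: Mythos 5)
Your proof is correct; note that the paper itself does not prove Proposition~\ref{p:GK} but quotes it from Gaubert and Katz~\cite{GK-07} (for max-plus, with max-times following by isomorphism), and your argument is precisely the standard one used there. Writing $y\oplus\mu\lambda^{-1}x\oplus\mu z$ as the tropical convex combination of $y$ and $x\oplus\lambda z$ with coefficients $\bunity$ and $\mu\lambda^{-1}\leq\bunity$, then letting $\lambda\to+\infty$ so that $\mu\lambda^{-1}x\to\bzero$ and invoking closedness, is exactly right, and the continuity checks you flag do go through in both models of $\Rmax$.
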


Proposition~\ref{p:GK} is proved in~\cite{GK-07} for the max-plus semifield, and
hence it follows also for the max-times semifield as these two semifields are isomorphic.

There are also other useful situations when a locally recessive element turns into
a globally recessive one.

\begin{lemma}\label{l:rec-supp}
Let $\cC\subseteq\Rmax^n$ be a convex set and $y\in \cC$.
If $z\in\rec_y \cC$ and
$\supp (y)\subseteq \supp (z)$, then $z\in\rec \cC$.
\end{lemma}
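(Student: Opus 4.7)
The plan is to reduce global recessivity to local recessivity at $y$ by showing that, for sufficiently large scalar $\mu$, the tropical sum $y\oplus\mu z$ collapses to $\mu z$, which would force $\mu z\in\cC$; then any $x\oplus\lambda z$ with $x\in\cC$ can be produced as a genuine tropical convex combination of $x$ and $\mu z$ in $\cC$.

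First I would handle the trivial case $\lambda=\bzero$: here $x\oplus\lambda z=x\in\cC$ and there is nothing to prove, so we may assume $\lambda\in\Rinv$. Next I would use the support hypothesis $\supp(y)\subseteq\supp(z)$ to select a scalar $\mu_0\in\Rinv$ such that $\mu_0 z_i\geq y_i$ for every $i\in[n]$: for $i\notin\supp(y)$ there is nothing to check since $y_i=\bzero$, and for $i\in\supp(y)\subseteq\supp(z)$ both $y_i$ and $z_i$ are invertible, so taking $\mu_0$ to be the tropical maximum of the ratios $y_iz_i^{-1}$ over $i\in\supp(y)$ works. Then, for every $\mu\geq\mu_0$ one has $(y\oplus\mu z)_i=\max(y_i,\mu z_i)=\mu z_i$ coordinate-wise, i.e. $y\oplus\mu z=\mu z$. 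Since $z\in\rec_y\cC$, the left-hand side lies in $\cC$, so $\mu z\in\cC$ whenever $\mu\geq\mu_0$.

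Now, given an arbitrary $x\in\cC$ and $\lambda\in\Rinv$, I would fix any $\mu\in\Rinv$ with $\mu\geq\mu_0$ and $\mu\geq\lambda$. The scalars $\alpha:=\bunity$ and $\beta:=\lambda\mu^{-1}$ satisfy $\alpha\oplus\beta=\bunity$, because $\lambda\mu^{-1}\leq\bunity$. Since both $x$ and $\mu z$ belong to $\cC$, tropical convexity of $\cC$ gives
\[
\alpha x\oplus\beta(\mu z)=x\oplus\lambda z\in\cC,
\]
which is precisely what it means for $z$ to be recessive at $x$. As $x$ and $\lambda$ were arbitrary, $z\in\rec\cC$.

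I do not expect a real obstacle: the only subtle point is the existence of the scalar $\mu_0$, which is exactly where the hypothesis $\supp(y)\subseteq\supp(z)$ is used; without this inclusion there would be coordinates where $z_i=\bzero$ but $y_i\neq\bzero$, and no amount of scaling could dominate $y$ by $\mu z$, breaking the reduction $y\oplus\mu z=\mu z$.
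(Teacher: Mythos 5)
Your proof is correct and follows essentially the same route as the paper's: dominate $y$ by $\mu z$ using $\supp(y)\subseteq\supp(z)$ so that $y\oplus\mu z=\mu z\in\cC$, then write $x\oplus\lambda z$ as the tropical convex combination of $x$ and $\mu z$ with coefficients $\bunity$ and $\lambda\mu^{-1}\leq\bunity$. The only cosmetic difference is that the paper explicitly sets $\mu=\bzero$ when $y=\bzero$, a degenerate case your argument absorbs implicitly.
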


\begin{proof}
Since $z\in\rec_y \cC$ we have $y\oplus \lambda z\in \cC$ for all
$\lambda \in \Rmax$, and since $\supp (y)\subseteq \supp (z)$ we
have $y\oplus \lambda z=\lambda z$ for all $\lambda\geq \mu$, where
$\mu= \oplus_{i\in \supp(y)} y_i z_i^{-1}$ if $y\neq \bzero$ and
$\mu =\bzero$ otherwise. Given any $\beta \in \Rmax$, recalling that
$\Rmax$ denotes either $\Rmaxplus=(\R\cup \{-\infty\},\max,+)$ or
$\Rmaxtimes=([0,+\infty),\max,\times)$, we know that there exists
$\lambda \in \Rmax$ such that $\lambda > \beta \oplus \mu$. Then,
for any $x\in \cC$, we have $x\oplus \beta z=  x\oplus \beta
\lambda^{-1} \lambda z\in \cC$ because $\beta \lambda^{-1} \leq
\bunity$ and $x,\lambda z\in \cC$.
Thus, we conclude that $z\in\rec \cC$.
\end{proof}

Using the above observations, we now show that
$(P,R)$-decompositions can be combined, under certain
conditions.

\begin{theorem}\label{t:gathering}
Let $\{\cC_\ell \}$ be a family of convex sets in $\Rmax^n$,
each of which admits the following $(P,R)$-decomposition:
\[
\cC_\ell =\conv(P_\ell )\oplus \cone(R_\ell ) \; ,
\]
and let
$\cC:=\conv(\cup_{\ell} \cC_\ell )$. Then,
\begin{equation}\label{e:unit-gen}
\cC=\conv(\cup_{\ell} P_\ell )\oplus\cone(\cup_{\ell} R_\ell )
\end{equation}
if any of the following conditions hold:
\begin{enumerate}[(i)]
\item\label{e:unit-gen-C1} $R_\ell \subseteq\rec \cC$ for all $\ell$;
\item\label{e:unit-gen-C2} $\cC$ is closed;
\item\label{e:unit-gen-C3} For any $z\in R_\ell $
there exists $y\in\conv (P_\ell )$ such that $\supp(y)\subseteq\supp(z)$.
\end{enumerate}
\end{theorem}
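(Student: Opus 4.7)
The plan is to prove the two inclusions separately, observing that the reverse inclusion $\cC\subseteq\conv(\cup_\ell P_\ell)\oplus\cone(\cup_\ell R_\ell)$ holds unconditionally, so that all three hypotheses (i)--(iii) are used only to establish the forward inclusion. For the reverse inclusion, I would take $x\in\cC=\conv(\cup_\ell\cC_\ell)$ and write $x=\bigoplus_k\mu_k c_k$ with $c_k\in\cC_{\ell_k}$, $\bigoplus_k\mu_k=\bunity$ (only finitely many $\mu_k\neq\bzero$). Decomposing each $c_k=p_k\oplus r_k$ with $p_k\in\conv(P_{\ell_k})$ and $r_k\in\cone(R_{\ell_k})$, distributing the scalar multiplication, and using associativity/commutativity of $\oplus$, the ``$P$-part'' $\bigoplus_k\mu_k p_k$ lies in $\conv(\cup_\ell P_\ell)$ (its coefficients sum to $\bunity$), and the ``$R$-part'' $\bigoplus_k\mu_k r_k$ lies in $\cone(\cup_\ell R_\ell)$. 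This gives $x\in\conv(\cup_\ell P_\ell)\oplus\cone(\cup_\ell R_\ell)$.

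For the forward inclusion, the key reduction is to show that in each of the three cases we have $\bigcup_\ell R_\ell\subseteq\rec\cC$. Granted this, if $p\in\conv(\cup_\ell P_\ell)$ and $r\in\cone(\cup_\ell R_\ell)$, then $p\in\cC$ (as $\cup_\ell P_\ell\subseteq\cup_\ell\cC_\ell\subseteq\cC$ and $\cC$ is convex), and writing $r=\bigoplus_j\lambda_j z_j$ with $z_j\in\cup_\ell R_\ell$ and adding the $z_j$'s one at a time, global recessiveness yields $p\oplus r\in\cC$.

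The remaining task is therefore to verify $\bigcup_\ell R_\ell\subseteq\rec\cC$ under each of (i)--(iii). Case (i) is immediate by hypothesis. For case (iii), fix $z\in R_\ell$ and choose $y\in\conv(P_\ell)$ with $\supp(y)\subseteq\supp(z)$; since $y\in\cC_\ell\subseteq\cC$ and $R_\ell\subseteq\rec\cC_\ell$ by Lemma~\ref{lemma:new-lemma93}, we get $y\oplus\lambda z\in\cC_\ell\subseteq\cC$ for every $\lambda\in\Rmax$, hence $z\in\rec_y\cC$, and Lemma~\ref{l:rec-supp} applied to $\cC$ upgrades this to $z\in\rec\cC$. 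Case (ii) is handled similarly: for $z\in R_\ell$, pick any $y\in\cC_\ell$ (ignoring the trivial case $\cC_\ell=\emptyset$); since $z\in\rec\cC_\ell$, we have $y\oplus\lambda z\in\cC_\ell\subseteq\cC$ for all $\lambda$, so $z\in\rec_y\cC$, and Proposition~\ref{p:GK} (applicable because $\cC$ is closed) gives $z\in\rec\cC$.

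The only real care lies in case (iii), where one must correctly invoke Lemma~\ref{l:rec-supp} for the ambient set $\cC$ (not $\cC_\ell$), after first producing a witness $y$ in $\cC$ whose support is contained in $\supp(z)$; that is why the hypothesis is stated in terms of $\conv(P_\ell)$ rather than merely $P_\ell$. The other steps are routine algebraic manipulations of finite tropical linear combinations.
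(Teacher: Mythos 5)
Your proof is correct and follows essentially the same route as the paper's: the unconditional inclusion $\cC\subseteq\conv(\cup_{\ell}P_\ell)\oplus\cone(\cup_{\ell}R_\ell)$, followed by a reduction of the reverse inclusion to $\cup_{\ell}R_\ell\subseteq\rec\cC$, verified in case (i) by hypothesis, in case (ii) via Proposition~\ref{p:GK}, and in case (iii) via Lemma~\ref{lemma:new-lemma93} and Lemma~\ref{l:rec-supp} applied to the ambient set $\cC$. The only cosmetic differences are that you prove the easy inclusion by expanding combinations explicitly where the paper simply invokes convexity of the Minkowski sum, and in case (ii) you take an arbitrary $y\in\cC_\ell$ where the paper takes $y\in P_\ell$.
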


\begin{proof}
We have:
\[
\cC_\ell=\conv(P_\ell )\oplus \cone(R_\ell )\subseteq \conv(\cup_{\ell}  P_\ell )\oplus\cone(\cup_{\ell} R_\ell ).
\]
As $\conv(\cup_{\ell}  P_\ell )\oplus\cone(\cup_{\ell}  R_\ell )$ is convex, it follows that
\[
\cC=\conv(\cup_{\ell}\cC_\ell)\subseteq\conv(\cup_{\ell}  P_\ell )\oplus\cone(\cup_{\ell}  R_\ell ).
\]

\eqref{e:unit-gen-C1} In this case $\cone(\cup_{\ell}
R_\ell)\subseteq\rec \cC$, and hence $\cC\oplus\cone(\cup_{\ell}
R_\ell )\subseteq \cC$.
We know that $P_\ell\subseteq\cC_\ell$, hence
$\conv(\cup_{\ell} P_\ell)\subseteq \conv(\cup_{\ell}\cC_\ell)=\cC$, whence
\[
\conv(\cup_{\ell} P_\ell )\oplus \cone(\cup_{\ell} R_\ell )\subseteq \cC\oplus\cone(\cup_{\ell} R_\ell )\subseteq \cC.
\]

Let us now prove that $R_\ell \subseteq\rec \cC$ holds for
cases~\eqref{e:unit-gen-C2} and~\eqref{e:unit-gen-C3}.

\eqref{e:unit-gen-C2} Each $z\in R_\ell $ is recessive at all $y\in P_\ell $, hence by Proposition~\ref{p:GK} it is globally recessive.

\eqref{e:unit-gen-C3} For $z\in R_{\ell}$, let
$y\in\conv(P_{\ell})$ be such that $\supp(y)\subseteq\supp(z)$. By
Lemma~\ref{lemma:new-lemma93} we have $z\in\rec\cC_{\ell}$, so in
particular $z\in\rec_y\cC_\ell$. It follows that $z \in \rec_y\cC$ because $\cC_\ell\subset \cC$. As $\supp(y)\subseteq\supp(z)$ and $z\in\rec_y\cC$,
we have $z\in\rec\cC$ by Lemma~\ref{l:rec-supp}.
\end{proof}

We will also need the following lemma.

\begin{lemma}\label{l:GatheringGenretorsCones}
Let $\{\cV_\ell\} =\{\cone(R_\ell )\} $ be a family of
cones
generated by the sets $R_\ell \subseteq \Rmax^n$ and let $\cV:=\cone(\cup_{\ell} \cV_\ell )$. Then, $\cV=\cone(\cup_{\ell} R_\ell )$.
\end{lemma}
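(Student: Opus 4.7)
The plan is to prove the two set inclusions separately, using only the definition of $\cone(\cdot)$ as the set of finite tropical linear combinations.

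For the easier inclusion $\cone(\cup_{\ell} R_\ell) \subseteq \cV$, I would observe that $R_\ell \subseteq \cone(R_\ell) = \cV_\ell$ for each $\ell$, since any $r \in R_\ell$ equals $\bunity \cdot r$. Therefore $\cup_\ell R_\ell \subseteq \cup_\ell \cV_\ell$, and applying $\cone$ to both sides (which is monotone in its argument) yields $\cone(\cup_\ell R_\ell) \subseteq \cone(\cup_\ell \cV_\ell) = \cV$.

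For the reverse inclusion $\cV \subseteq \cone(\cup_\ell R_\ell)$, I would take an arbitrary $x \in \cV$ and unfold the definitions in two layers. By definition of $\cV = \cone(\cup_\ell \cV_\ell)$, we can write $x = \bigoplus_{j} \mu_j v_j$, a finite tropical combination with $v_j \in \cV_{\ell_j}$ for some index $\ell_j$. Since each $v_j$ lies in $\cone(R_{\ell_j})$, we can in turn write $v_j = \bigoplus_{k} \lambda_{j,k} r_{j,k}$ for finitely many $r_{j,k} \in R_{\ell_j} \subseteq \cup_\ell R_\ell$. Substituting and using distributivity of $\otimes$ over $\oplus$ gives
\[
x = \bigoplus_j \mu_j \Bigl( \bigoplus_k \lambda_{j,k} r_{j,k} \Bigr) = \bigoplus_{j,k} (\mu_j \otimes \lambda_{j,k})\, r_{j,k},
\]
which exhibits $x$ as a finite tropical linear combination of elements of $\cup_\ell R_\ell$, i.e., $x \in \cone(\cup_\ell R_\ell)$.

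There is no real obstacle here; the statement is essentially the associativity of the cone closure operator. The only thing to be mildly careful about is that both combinations are finite, so their composition remains finite, which ensures the result lies in $\cone(\cup_\ell R_\ell)$ as defined in the paper (finitely many nonzero scalars). This finiteness passes through the substitution without issue.
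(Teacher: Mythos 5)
Your proof is correct and follows essentially the same route as the paper: the easy inclusion is argued identically via monotonicity of $\cone$, and your explicit two-layer expansion with distributivity for the reverse inclusion is just an unfolding of the paper's more abstract step, which notes that each $\cV_\ell=\cone(R_\ell)\subseteq\cone(\cup_\ell R_\ell)$ and that the latter, being a cone, must contain $\cone(\cup_\ell\cV_\ell)$. Your remark about finiteness of the combined combination is the right point to check, and it does go through.
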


\begin{proof}
We have $\cV_\ell=\cone(R_\ell )\subseteq \cone(\cup_{\ell} R_\ell )$ for all $\ell$, and so $\cup_{\ell}\cV_\ell\subseteq \cone(\cup_{\ell} R_\ell )$.
As $\cone(\cup_{\ell}  R_\ell )$ is a cone, it follows that
$\cV=\cone(\cup_{\ell}\cV_\ell)\subseteq \cone(\cup_{\ell}  R_\ell )$.

For the reverse inclusion, since $ R_\ell\subseteq \cV_\ell$ for all $\ell$,
we have $\cup_{\ell}  R_\ell \subseteq \cup_{\ell}\cV_\ell$, and so
$\cone(\cup_{\ell}  R_\ell )\subseteq \cone(\cup_{\ell}\cV_\ell)= \cV$.
\end{proof}

\section{Tropical semispaces}\label{s:semi}

In this section we aim to give a simpler proof
for the structure of semispaces in $\Rmax^n$,
originally described by Nitica and Singer~\cite{NS-07I,NS-07II},
and to introduce hemispaces in $\Rmax^n$
with some preliminary results on their relation with semispaces.

\subsection{Conical hemispaces, quasisectors and quasisemispaces}\label{ss:semicone}

We first introduce and study three objects called conical
hemispaces, quasisectors and quasisemispaces. The importance of
these lies in the fact that they will be the main tools for studying
hemispaces, sectors and semispaces (see
Definitions~\ref{def:convhemi},~\ref{def:cone23}
and~\ref{Def:Semispace} below) using the homogenization technique.

\begin{definition}\label{def:conhemi}
We call {\rm conical hemispace} a cone $\cV_1\subseteq\Rmax^n$, for
which there exists a cone $\cV_2\subseteq\Rmax^n$ such that
$\cV_1\cap\cV_2=\{\bzero\}$ and $\cV_1\cup\cV_2=\Rmax^n$. In this
case we call $(\cV_1,\cV_2)$ a {\rm joined pair of conical
hemispaces} (since $\cV_2$ is a conical hemispace as well). We say
that a joined pair $(\cV_1,\cV_2)$ of conical hemispaces is {\rm
non-trivial} when $\cV_1\neq \{ \bzero \}$ and $\cV_2\neq \{ \bzero
\}$.
\end{definition}

For completeness, we show the relationship between conical hemispaces and hemispaces (for the concept of hemispace, see the Introduction or
Definition~\ref{def:convhemi} below).

\begin{definition}
A subset $W\subseteq \Rmax^n$ is called {\rm wedge} if
$x\in W$ and $\lambda \in \Rmax$ imply $\lambda x\in W$.
\end{definition}

\begin{lemma}\label{lem:wedge}
If $W\subseteq \Rmax^n$ is a wedge, then $\complement W \cup \{\bzero\}\subseteq \Rmax^n$ is also a wedge.
\end{lemma}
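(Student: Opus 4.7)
The plan is to unfold the definition of wedge and use the fact that every nonzero element of $\Rmax$ is invertible (this is where the semifield structure comes in). The central observation is the following symmetry: if $W$ is a wedge and $\lambda\in\Rinv$, then $x\in W$ if and only if $\lambda x\in W$. One direction is the wedge property applied with scalar $\lambda$; the other is the wedge property applied to $\lambda x$ with scalar $\lambda^{-1}$, which recovers $x$.

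With this in hand, the proof is a short case analysis. Let $y\in\complement W\cup\{\bzero\}$ and $\lambda\in\Rmax$; the goal is $\lambda y\in\complement W\cup\{\bzero\}$. First, if either $y=\bzero$ or $\lambda=\bzero$, then $\lambda y=\bzero$ lies in $\{\bzero\}$ and we are done. Otherwise $y\in\complement W$ (since $y\neq\bzero$) and $\lambda\in\Rinv$. Suppose, for contradiction, that $\lambda y\notin\complement W\cup\{\bzero\}$, so that $\lambda y\in W$. By the key observation above (invoked with $\lambda^{-1}$), this forces $y=\lambda^{-1}(\lambda y)\in W$, contradicting $y\in\complement W$.

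There is essentially no obstacle here: the lemma is a pure unwinding of definitions, and the only structural input is the existence of multiplicative inverses for every nonzero element of $\Rmax$. I would keep the write-up to a handful of lines, making sure to flag explicitly the two uses of the wedge axiom (once for $\lambda$ and once for $\lambda^{-1}$) since this symmetry between $W$ and $\complement W\cup\{\bzero\}$ is exactly what justifies adjoining $\bzero$ to the complement.
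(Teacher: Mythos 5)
Your argument is correct and is essentially the paper's own proof: both reduce to the observation that if $\lambda y\in W$ with $\lambda\neq\bzero$ and $y\neq\bzero$, then $y=\lambda^{-1}(\lambda y)\in W$ by the wedge axiom applied with $\lambda^{-1}$, yielding the contradiction. Your explicit case split on $y=\bzero$ or $\lambda=\bzero$ is just a slightly more verbose packaging of the same one-line argument.
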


\begin{proof}
Let $x\in \complement W \cup \{\bzero\}$ and $\lambda \in \Rmax$. We show that $\lambda x\in \complement W \cup \{\bzero\}$.
Indeed, if $\lambda x\in W$ and $\lambda x\not =\bzero,$ then $\lambda \not =\bzero$ and $x=\lambda^{-1}(\lambda x)\in W$, a contradiction.
\end{proof}

\begin{proposition}\label{p:conhemi}
A convex set $\cV\subseteq\Rmax^n$ is a conical
hemispace if and only if it is a hemispace and a cone.
\end{proposition}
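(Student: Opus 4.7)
The proposition is an ``iff'' between two descriptions of the same object, so the plan is to prove both directions by showing that, given one cone, the other cone of the joined pair is essentially forced to be $\complement\cV\cup\{\bzero\}$, and that the two convexity/cone conditions translate into one another through the elementary observation $x\oplus y = \bunity\,x\oplus\bunity\,y$ with $\bunity\oplus\bunity=\bunity$, i.e.\ tropical addition is a special case of the tropical segment operation.

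For the forward direction, assume $\cV$ is a conical hemispace and let $\cV_2$ be the accompanying cone, so $\cV\cap\cV_2=\{\bzero\}$ and $\cV\cup\cV_2=\Rmax^n$. Since $\cV$ is a cone it contains $\bzero$, hence $\cV_2=\complement\cV\cup\{\bzero\}$; equivalently, $\complement\cV=\cV_2\setminus\{\bzero\}$. To conclude that $\cV$ is a hemispace I will show $\complement\cV$ is convex. Take $x,y\in\complement\cV$ and $\alpha,\beta\in\Rmax$ with $\alpha\oplus\beta=\bunity$. Because $\cV_2$ is a cone, $\alpha x\oplus\beta y\in\cV_2$; because $\alpha\oplus\beta=\bunity\neq\bzero$, at least one of $\alpha,\beta$ is nonzero, and the corresponding summand $\alpha x$ or $\beta y$ is nonzero (as $x,y\neq\bzero$), so $\alpha x\oplus\beta y\neq\bzero$ and therefore lies in $\cV_2\setminus\{\bzero\}=\complement\cV$. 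This is the key calculation of the direction.

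For the converse, suppose $\cV$ is a cone and a hemispace and define $\cV_2:=\complement\cV\cup\{\bzero\}$. The identities $\cV\cap\cV_2=\{\bzero\}$ and $\cV\cup\cV_2=\Rmax^n$ are immediate from $\bzero\in\cV$, so everything reduces to verifying that $\cV_2$ is a cone. Closure under scalar multiplication is exactly Lemma~\ref{lem:wedge} applied to the wedge $\cV$. For closure under $\oplus$, take $x,y\in\cV_2$: if either is $\bzero$ the sum is trivially in $\cV_2$, and if both $x,y\in\complement\cV$, then choosing $\alpha=\beta=\bunity$ (which satisfies $\bunity\oplus\bunity=\bunity$) shows that $x\oplus y$ belongs to the tropical segment joining $x$ and $y$, which sits inside the convex set $\complement\cV$; moreover $x\oplus y\neq\bzero$, so $x\oplus y\in\complement\cV\subseteq\cV_2$.

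The argument is really a one-page verification: no deep structure is needed, only the bookkeeping of $\bzero$. The only mild subtlety — and hence the ``main obstacle'' in so far as there is one — is realizing that tropical $\oplus$ is covered by the tropical segment operation with $\alpha=\beta=\bunity$, which is what allows convexity of $\complement\cV$ to upgrade to the cone property of $\complement\cV\cup\{\bzero\}$. Once this observation is in hand, both directions assemble in a few lines.
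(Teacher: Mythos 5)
Your proof is correct and follows essentially the same route as the paper: both rest on identifying the partner cone with $\complement\cV\cup\{\bzero\}$, on Lemma~\ref{lem:wedge} for closure under scalars, and on the observation that $x\oplus y=\bunity x\oplus\bunity y$ is a tropical convex combination. The only differences are cosmetic — you argue the converse directly where the paper argues by contradiction, and you spell out the forward direction (convexity of $\complement\cV$) which the paper merely asserts.
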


\begin{proof}
If $\cV$ is a conical hemispace, then it is a cone and its
complement (not enlarged with $\bzero$) is a convex set. Thus $\cV$
is a hemispace and a cone, whence the ``only if'' part follows. Conversely,
assume (by contradiction) that $\cV$ is a hemispace and a cone, and
$\complement \cV\cup \{\bzero\}$ is a convex set and not a cone.
Then $\complement \cV \cup \{\bzero\}$ contains the sum of any two of its elements but it is not a cone,
so it is not a wedge. By Lemma~\ref{lem:wedge} $\cV$ is not a wedge, in
contradiction with the fact that it is a cone. Whence the ``if''
part follows.
\end{proof}

Note that conical hemispaces are almost the same as ``conical
halfspaces'' of Briec and Horvath~\cite{BH-08}. Indeed, the latter
``conical halfspaces'' are, in our terminology, hemispaces closed
under the multiplication by any non-null scalar.
In~\cite{BH-08} it is not required that $\bzero$ belongs to the "conical halfspace".

\begin{definition}\label{def:consec23}
For any $y\neq \bzero$ in $\Rmax^n$ and $i\in\supp(y)$, define the following sets:
\begin{equation}\label{conic-sec-compl}
\ConicSec_i(y):=\left\{x\in\Rmax^n\mid \mysup_{j\in \supp(y)} x_j y_j^{-1}\leq x_i y_i^{-1},
\;\makebox{and}\; x_j=\bzero\;\makebox{for all}\;j\notin\supp(y)\right\},
\end{equation}
which will be referred to as {\rm quasisectors} of type $i$.
\end{definition}

Since the complement of $\ConicSec_i(y)$ is
\begin{equation}\label{eq:Wiy}
\CompConicSec_i(y)=\left\{x\in\Rmax^n\mid \mysup_{j\in \supp(y)} x_j y_j^{-1}> x_i y_i^{-1},
\;\makebox{or}\; x_j>\bzero\;\makebox{for some}\;j\notin\supp(y) \right\},
\end{equation}
it follows that $\ConicSec_i(y)$ and $\CompConicSec_i(y)\cup
\{\bzero\}$ are both cones, so they form a joined pair of conical
hemispaces. Also note that $y\in \ConicSec_i(y)$ for all $i\in
\supp(y)$.

The following result appears in several places
(\cite{BSS,DS-04,Jos-05,GK-07,Ser-09-inLS}).

\begin{theorem}\label{t:multiorder}
Let $\cV\subseteq\Rmax^n$ be a cone and take $y\neq\bzero$
in $\Rmax^n$. Then $y\in \cV$ if and only if
\[
(\ConicSec_i(y)\setminus \{\bzero\})\cap\cV\not =\emptyset
\]
for each $i\in \supp(y)$.
\end{theorem}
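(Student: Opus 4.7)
The proof will handle the two directions separately. The forward direction is immediate from the observation recorded just before the statement: since $y_j\otimes y_j^{-1}=\bunity$ at every $j\in\supp(y)$, the maximum in~\eqref{conic-sec-compl} is attained at every index $i\in\supp(y)$, so $y\in\ConicSec_i(y)$ for all such $i$. Combined with $y\neq\bzero$ and $y\in\cV$, this yields $y\in(\ConicSec_i(y)\setminus\{\bzero\})\cap\cV$ for each $i\in\supp(y)$.

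The nontrivial direction is the converse. The idea I would use is to reconstruct $y$ as an explicit tropical linear combination of points from $\cV$. For each $i\in\supp(y)$, fix some $x^{(i)}\in(\ConicSec_i(y)\setminus\{\bzero\})\cap\cV$. First, I would observe that the $i$-th coordinate $x^{(i)}_i$ must be nonzero: otherwise the quasisector inequality $\bigoplus_{j\in\supp(y)}x^{(i)}_j y_j^{-1}\leq x^{(i)}_i y_i^{-1}=\bzero$ forces $x^{(i)}_j=\bzero$ for every $j\in\supp(y)$, and together with the defining condition $x^{(i)}_j=\bzero$ for $j\notin\supp(y)$ this contradicts $x^{(i)}\neq\bzero$. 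Hence the rescaling factor $\mu_i:=y_i\otimes (x^{(i)}_i)^{-1}\in\Rinv$ is well-defined.

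Next, I plan to verify two properties of the rescaled vector $\mu_i x^{(i)}$: that $(\mu_i x^{(i)})_i=y_i$, which is immediate, and that $\mu_i x^{(i)}\leq y$ componentwise. The latter, at any $j\in\supp(y)$, is exactly the inequality $x^{(i)}_j y_j^{-1}\leq x^{(i)}_i y_i^{-1}$ supplied by Definition~\ref{def:consec23}, while for $j\notin\supp(y)$ both sides vanish because $\supp(x^{(i)})\subseteq\supp(y)$.

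Finally, I would assemble $z:=\bigoplus_{i\in\supp(y)}\mu_i x^{(i)}$. The componentwise bound gives $z\leq y$, and the equality $(\mu_i x^{(i)})_i=y_i$ forces $z_i\geq y_i$ for each $i\in\supp(y)$; outside $\supp(y)$ both $z$ and $y$ are $\bzero$. Hence $z=y$. Since $\cV$ is a cone and each $x^{(i)}\in\cV$, every scalar multiple $\mu_i x^{(i)}$ lies in $\cV$, and so does the finite tropical sum $z=y$. The argument has no real obstacle: the definition of $\ConicSec_i(y)$ is tailor-made to guarantee that the rescaled vectors $\mu_i x^{(i)}$ sit below $y$ with equality at the distinguished coordinate, so the only care needed is the bookkeeping to confirm $x^{(i)}_i\neq\bzero$ and the support-based vanishing off $\supp(y)$.
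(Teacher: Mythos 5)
Your proposal is correct and follows essentially the same route as the paper: the forward direction via $y\in\ConicSec_i(y)$, and the converse by checking $x^{(i)}_i\neq\bzero$ and writing $y=\bigoplus_{i\in\supp(y)}y_i(x^{(i)}_i)^{-1}x^{(i)}$, which is exactly the paper's linear combination with $\lambda_i=y_i(x_i^i)^{-1}$. Your write-up merely spells out the componentwise verification that the paper leaves implicit.
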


\begin{proof} The ``only if'' part follows from the fact that $y\in \ConicSec_i(y)$ for $i\in \supp(y)$.

In order to prove the ``if'' part, assume that $i\in\supp(y)$ and
$x^i \in (\ConicSec_i(y)\setminus\{\bzero\})\cap \cV$. We claim that
$x^i_i\neq\bzero$. Indeed, if we had $x_i^i=\bzero$, then by
$x^i\in\ConicSec_i(y)$ and~\eqref{conic-sec-compl} we would have
$\oplus_{j\in\supp(y)}x_j^i y_j^{-1}=\bzero$ and $x^i_j=\bzero$ for all $j\notin\supp(y)$, hence $x^i=\bzero$, in contradiction with our assumption.
Furthermore, $y_i x_j^i\leq y_j x_i^i$ for all $j\in
[n]$. Then, $y$ can be written as a linear combination of
the $x^i$'s:
$$
y=\mysup_{i\in \supp(y)}\lambda_ix^i,
$$
where $\lambda_i=y_i(x_i^i)^{-1}$, therefore $y\in \cV$.
\end{proof}

Restating Theorem~\ref{t:multiorder} we get the following.

\begin{theorem}\label{t:multiorder2}
Let $\cV\subseteq\Rmax^n$ be a cone and take $y\neq\bzero$
in $\Rmax^n$. Then $y\notin \cV$ if and
only if $\cV\subseteq \CompConicSec_i(y)\cup \{\bzero\}$
for some $i\in \supp(y)$.
\end{theorem}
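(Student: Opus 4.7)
The plan is to observe that Theorem~\ref{t:multiorder2} is simply the contrapositive of Theorem~\ref{t:multiorder}, modulo rewriting an emptiness condition as a containment. No new work is required beyond a careful negation; this is why the result is introduced as ``Restating Theorem~\ref{t:multiorder}''.

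Concretely, I would proceed as follows. Theorem~\ref{t:multiorder} gives the equivalence
\[
y \in \cV \;\;\Longleftrightarrow\;\; (\ConicSec_i(y)\setminus\{\bzero\})\cap \cV \neq \emptyset \text{ for every } i\in\supp(y).
\]
Negating both sides yields
\[
y \notin \cV \;\;\Longleftrightarrow\;\; \exists\, i\in\supp(y) \text{ such that } (\ConicSec_i(y)\setminus\{\bzero\})\cap \cV = \emptyset.
\]
The only remaining step is to rewrite the emptiness of the intersection as a containment. Since the complement of $\ConicSec_i(y)$ in $\Rmax^n$ is $\CompConicSec_i(y)$, the condition $(\ConicSec_i(y)\setminus\{\bzero\})\cap \cV = \emptyset$ says that every nonzero element of $\cV$ lies in $\CompConicSec_i(y)$; equivalently, $\cV\setminus\{\bzero\}\subseteq \CompConicSec_i(y)$, which is the same as $\cV \subseteq \CompConicSec_i(y)\cup\{\bzero\}$ (noting that $\bzero\in\cV$ holds automatically because $\cV$ is a cone).

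There is no genuine obstacle here, since Theorem~\ref{t:multiorder} has already been proved: the result is a purely logical reformulation designed to emphasize the ``exclusion'' viewpoint (a point is outside a cone exactly when the cone is trapped inside one of the conical hemispaces $\CompConicSec_i(y)\cup\{\bzero\}$ cut out by a quasisector). If anything deserves care, it is only the trivial check that the ``for some $i$'' in the negated statement matches the ``for some $i$'' in the claim, and that the set $\CompConicSec_i(y)\cup\{\bzero\}$ is indeed the complement of $\ConicSec_i(y)\setminus\{\bzero\}$ in $\Rmax^n$, as already noted in the discussion following \eqref{eq:Wiy}.
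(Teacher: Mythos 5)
Your proposal is correct and coincides with the paper's treatment: the paper offers no separate proof, introducing Theorem~\ref{t:multiorder2} merely with the phrase ``Restating Theorem~\ref{t:multiorder}'', i.e.\ exactly the contrapositive-plus-rewriting you carry out. Your extra care about converting $(\ConicSec_i(y)\setminus\{\bzero\})\cap\cV=\emptyset$ into the containment $\cV\subseteq\CompConicSec_i(y)\cup\{\bzero\}$ is the only (trivial) content, and you handle it correctly.
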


We are also interested in the following object.

\begin{definition}
A cone in $\Rmax^n$ is called a {\rm quasisemispace} at
$y\neq\bzero$ in $\Rmax^n$ if it is a maximal (with respect to
inclusion) cone not containing $y$.
\end{definition}

\begin{corollary}\label{c:semispaces}
There are exactly the cardinality of $\supp (y)$ quasisemispaces at
$y\neq\bzero$ in $\Rmax^n$. These are given by the cones $\CompConicSec_i(y)\cup \{\bzero\}$ for $i\in \supp (y)$.
\end{corollary}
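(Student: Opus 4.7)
The plan is to use Theorem~\ref{t:multiorder2} as the main lever, converting the ``maximal cone not containing $y$'' condition into a concrete containment. First I would note that for each $i\in\supp(y)$ the set $\CompConicSec_i(y)\cup\{\bzero\}$ has already been observed to be a cone (in the paragraph following Definition~\ref{def:consec23}), and it does not contain $y$, since $y\in\ConicSec_i(y)$. So these $|\supp(y)|$ sets are candidates for quasisemispaces at $y$, and the proof reduces to showing (a) each of them is maximal among cones avoiding $y$, (b) every quasisemispace at $y$ is of this form, and (c) the $|\supp(y)|$ candidates are pairwise distinct.

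Both (a) and (b) flow from a single application of Theorem~\ref{t:multiorder2}: any cone $\cV$ with $y\notin\cV$ satisfies $\cV\subseteq\CompConicSec_i(y)\cup\{\bzero\}$ for some $i\in\supp(y)$. For (b), a quasisemispace $\cV$ at $y$ is then forced by its own maximality to equal that $\CompConicSec_i(y)\cup\{\bzero\}$. For (a), any cone properly containing $\CompConicSec_i(y)\cup\{\bzero\}$ and still avoiding $y$ would, by the same theorem, be contained in some $\CompConicSec_j(y)\cup\{\bzero\}$, so one would have a strict inclusion $\CompConicSec_i(y)\cup\{\bzero\}\subsetneq\CompConicSec_j(y)\cup\{\bzero\}$, which (c) will rule out.

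For (c), the natural witnesses are the unit vectors. I would check that for any distinct $i,j\in\supp(y)$ one has $\uvector{j}\in\CompConicSec_i(y)$ while $\uvector{j}\notin\CompConicSec_j(y)\cup\{\bzero\}$. Both are one-line computations from~\eqref{conic-sec-compl} and~\eqref{eq:Wiy}: since $\supp(\uvector{j})=\{j\}\subseteq\supp(y)$, the relevant quantity is $\bigoplus_{k\in\supp(y)}(\uvector{j})_ky_k^{-1}=y_j^{-1}$, which equals $(\uvector{j})_jy_j^{-1}$ (placing $\uvector{j}$ in $\ConicSec_j(y)$) but strictly exceeds $(\uvector{j})_iy_i^{-1}=\bzero$ (placing $\uvector{j}$ in $\CompConicSec_i(y)$). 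Since $\uvector{j}\neq\bzero$, this shows $\uvector{j}$ belongs to $\CompConicSec_i(y)\cup\{\bzero\}$ but not to $\CompConicSec_j(y)\cup\{\bzero\}$, giving both distinctness and the missing step in the maximality argument.

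I do not expect any real obstacle: the entire proof is a packaging of Theorem~\ref{t:multiorder2} together with the one-line witness computation. The only point meriting care is making sure the comparisons between the sets $\CompConicSec_i(y)\cup\{\bzero\}$ are only taken over $i\in\supp(y)$, which is automatic because Theorem~\ref{t:multiorder2} only produces indices in $\supp(y)$.
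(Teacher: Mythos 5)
Your proof is correct and follows essentially the same route as the paper's, which likewise reduces everything to Theorem~\ref{t:multiorder2} plus maximality. You are in fact more complete: the paper's proof only verifies that every quasisemispace coincides with some $\CompConicSec_i(y)\cup\{\bzero\}$, leaving implicit both the maximality of each candidate $\CompConicSec_i(y)\cup\{\bzero\}$ and their pairwise distinctness, which your unit-vector witness computation settles.
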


\begin{proof}
Suppose that $\cV$ is a quasisemispace at $y$. Since it is a cone
not containing $y$, Theorem~\ref{t:multiorder2} implies that it is
contained in $\CompConicSec_i(y)\cup \{\bzero\}$ for some $i\in \supp (y)$.
By maximality, it follows that it coincides with
$\CompConicSec_i(y)\cup \{\bzero\}$.
\end{proof}

This statement shows that Theorem~\ref{t:multiorder2} is an instance
of a separation theorem in abstract convexity, since it says that
when $\cV$ is a cone in $\Rmax^n$, we have $y\notin \cV$ if and only
if there exists a quasisemispace
$\CompConicSec_i(y)\setminus\{\bzero\}$ (where $i\in\supp(y)$) in
$\Rmax^n$ that contains $\cV$ and does not contain $y$.
In particular, we obtain the following result.

\begin{corollary}\label{c:ext-repr}
Each non-trivial cone $\cV$ can be represented as the intersection
of the quasisemispaces $\CompConicSec_i(y)\cup \{\bzero\}$ containing it (where
$y\notin \cV$ and $i\in\supp(y)$), and for each complement $F$ of a
cone, the set $F\cup \{\bzero\}$ can be represented as the union of the
quasisectors $\ConicSec_i(y)$ contained in $F\cup \{\bzero\}$
(where $y\in F$ and $i\in\supp(y)$).
\end{corollary}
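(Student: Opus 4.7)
The plan is to derive both halves of the corollary directly from Theorem~\ref{t:multiorder2}, whose content is essentially that any point outside a cone can be separated from it by a quasisemispace of the form $\CompConicSec_i(y)\cup\{\bzero\}$. No new geometry is needed; the remainder is set-theoretic bookkeeping, with the only mild subtlety being the role of $\bzero$.

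For the first assertion, I would let $\mathcal{I}$ denote the intersection of all quasisemispaces $\CompConicSec_i(y)\cup\{\bzero\}$ (with $y\notin\cV$ and $i\in\supp(y)$) that contain $\cV$, and verify $\cV=\mathcal{I}$. Non-triviality of $\cV$ together with Theorem~\ref{t:multiorder2} guarantees that at least one such quasisemispace exists, so the intersection is well defined, and $\cV\subseteq\mathcal{I}$ is automatic. For the reverse inclusion, I would argue by contradiction: if $z\in\mathcal{I}\setminus\cV$, then $z\neq\bzero$ (because $\bzero\in\cV$), so Theorem~\ref{t:multiorder2} yields some $i\in\supp(z)$ with $\cV\subseteq\CompConicSec_i(z)\cup\{\bzero\}$. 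This quasisemispace is therefore among those defining $\mathcal{I}$, yet it does not contain $z$ (since $z\in\ConicSec_i(z)$ and $z\neq\bzero$), contradicting $z\in\mathcal{I}$.

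For the second assertion, writing $F=\complement\cV$ for some cone $\cV$, I would let $\mathcal{U}$ denote the union of all quasisectors $\ConicSec_i(y)$ with $y\in F$, $i\in\supp(y)$, and $\ConicSec_i(y)\subseteq F\cup\{\bzero\}$. The inclusion $\mathcal{U}\subseteq F\cup\{\bzero\}$ holds by construction. For the reverse inclusion, given $z\in F$ one has $z\neq\bzero$ (since $\bzero\in\cV$) and $z\notin\cV$, so Theorem~\ref{t:multiorder2} furnishes $i\in\supp(z)$ with $\cV\subseteq\CompConicSec_i(z)\cup\{\bzero\}$. Taking complements gives $\ConicSec_i(z)\setminus\{\bzero\}\subseteq F$, hence $\ConicSec_i(z)\subseteq F\cup\{\bzero\}$ is one of the quasisectors contributing to $\mathcal{U}$, and since $z\in\ConicSec_i(z)$ we conclude $z\in\mathcal{U}$. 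The point $\bzero$ itself lies in every quasisector, so it is captured by $\mathcal{U}$ as soon as $F$ is non-empty, completing the representation.

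The argument presents no real obstacle beyond invoking Theorem~\ref{t:multiorder2} twice; what one must watch is that $\bzero$ belongs to $\cV$, to every $\ConicSec_i(y)$, and to every $\CompConicSec_i(y)\cup\{\bzero\}$, but not to $F$. These conventions are exactly what make the intersection/union equalities come out cleanly.
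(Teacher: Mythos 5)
Your proposal is correct and is exactly the argument the paper intends: the corollary is stated without an explicit proof as an immediate consequence of Theorem~\ref{t:multiorder2} (the separation by quasisemispaces), and your two applications of that theorem plus the complementation step supply precisely the missing bookkeeping. Your remark that the second representation requires $F\neq\emptyset$ (and that $\bzero$ is then absorbed by any quasisector in the union) is a legitimate edge-case observation that the paper glosses over.
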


\begin{lemma}\label{l:ckiintersect}
Assume that $x,y\in\Rmax^n$ satisfy $\supp(x)\cap \supp(y)\neq \emptyset$.
Then, for any $i\in \supp(x)\cap \supp(y)$,
the non-null point $z$ with coordinates
\begin{equation}\label{zpoint}
z_j:=\min \left\{ x_i^{-1} x_j , y_i^{-1} y_j \right\}
\end{equation}
belongs to both $\ConicSec_i(x)$ and $\ConicSec_i(y)$.
\end{lemma}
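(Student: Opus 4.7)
The plan is to verify directly that the point $z$ defined by~\eqref{zpoint} satisfies the two defining conditions of $\ConicSec_i(x)$ stated in~\eqref{conic-sec-compl}, namely that its coordinates vanish outside $\supp(x)$ and that $\mysup_{j\in\supp(x)} z_j x_j^{-1}\leq z_i x_i^{-1}$; then the symmetric verification for $\ConicSec_i(y)$ yields the claim. Observe first that since $i\in\supp(x)\cap\supp(y)$, both $x_i$ and $y_i$ are invertible, so $z_i=\min\{x_i^{-1}x_i,\,y_i^{-1}y_i\}=\bunity$, which in particular shows $z\neq\bzero$.

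Next I would check the support condition. If $j\notin\supp(x)$, then $x_j=\bzero$, so $x_i^{-1}x_j=\bzero$, and hence $z_j=\min\{\bzero,y_i^{-1}y_j\}=\bzero$. By the symmetric argument, $z_j=\bzero$ for $j\notin\supp(y)$.

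For the inequality in $\ConicSec_i(x)$, note that for every $j\in\supp(x)$ we have $z_j\leq x_i^{-1}x_j$ by the definition of $z_j$ as a minimum, and multiplying through by $x_j^{-1}$ (which is legitimate because $j\in\supp(x)$ makes $x_j$ invertible) gives $z_j x_j^{-1}\leq x_i^{-1}=z_i x_i^{-1}$. Taking the $\mysup$ over $j\in\supp(x)$ yields the required inequality. Exactly the same argument with $x$ replaced by $y$ shows that $z\in\ConicSec_i(y)$.

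The argument is essentially a direct unpacking of the definitions; no genuine obstacle arises, but one must be careful to treat the boundary cases in which $x_j$ or $y_j$ equals $\bzero$, since the formula~\eqref{zpoint} involves $x_i^{-1}x_j$ and $y_i^{-1}y_j$ only, and these expressions make sense in $\Rmax$ even when $x_j$ or $y_j$ vanishes (they then equal $\bzero$). The check of the support condition on the complement of $\supp(x)$ (respectively $\supp(y)$) is precisely what uses this observation.
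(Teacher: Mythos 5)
Your proof is correct and follows essentially the same route as the paper's: observe $z_i=\bunity$, check that $z_j=\bzero$ outside $\supp(x)\cap\supp(y)$, and verify the defining inequality of the quasisector from $z_j\le x_i^{-1}x_j$. The only difference is that you spell out the boundary cases slightly more explicitly than the paper does.
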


\begin{proof}
Note that $z_j =\bzero$ for $j \not \in \supp(x)\cap \supp(y)$.
Moreover, since $z_i = \bunity$, we have $z_j x_j^{-1}\leq  x_i^{-1}
= z_i x_i^{-1}$ for all $j\in \supp(x)$. Then, we conclude that $z\in
\ConicSec_i(x)$. The proof of $z\in \ConicSec_i(y)$ is similar.
\end{proof}

Corollary~\ref{c:ext-repr} and Lemma~\ref{l:ckiintersect} imply the
following (preliminary) result on conical hemispaces.

\begin{theorem}\label{t:conhemsem}
For any joined pair $(\cV_1,\cV_2)$ of conical hemispaces in
$\Rmax^n$ there exist disjoint subsets $I,J$ of $[n]$ such that
\begin{equation}\label{eq:unions-123}
\begin{split}
\cV_1&= \cup \left\{\ConicSec_i(y)\mid \ConicSec_i(y)\subseteq \cV_1, y\in \cV_1, i\in I\right\} =\cone \left(\cup \left\{\ConicSec_i(y)\mid \ConicSec_i(y)\subseteq \cV_1, y\in \cV_1, i\in I\right\}\right) \; ,\\
\cV_2&=\cup \left\{\ConicSec_j(y)\mid \ConicSec_j(y)\subseteq \cV_2, y\in \cV_2, j\in J\right\} =\cone \left(\cup \left\{\ConicSec_j(y)\mid \ConicSec_j(y)\subseteq \cV_2, y\in \cV_2, j\in J\right\} \right) \; .
\end{split}
\end{equation}
\end{theorem}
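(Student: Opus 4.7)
The plan is to apply Corollary~\ref{c:ext-repr} to each of the two conical hemispaces, and then use Lemma~\ref{l:ckiintersect} to force the two resulting sets of quasisector types to be disjoint. Since $\cV_1,\cV_2$ are cones with $\cV_1\cap\cV_2=\{\bzero\}$ and $\cV_1\cup\cV_2=\Rmax^n$, I first observe that $\complement\cV_2\cup\{\bzero\}=\cV_1$ and $\complement\cV_1\cup\{\bzero\}=\cV_2$. Applying the second assertion of Corollary~\ref{c:ext-repr} with $F=\complement\cV_2$ and then with $F=\complement\cV_1$ yields that $\cV_1$ is the union of all quasisectors $\ConicSec_i(y)$ it contains (with $y\in\cV_1\setminus\{\bzero\}$ and $i\in\supp(y)$), and similarly for $\cV_2$.

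I would then define the natural index sets
\[
I := \{\, i\in[n] \mid \ConicSec_i(x)\subseteq \cV_1 \text{ for some } x\in\cV_1\setminus\{\bzero\}\,\},
\]
and $J$ analogously for $\cV_2$. By construction, restricting the unions above to indices in $I$ (resp.\ $J$) removes no quasisector at all, so the first set-theoretic equalities in~\eqref{eq:unions-123} hold. Since each $\cV_\ell$ is already a cone that coincides with the corresponding union, taking $\cone$ of that union gives back $\cV_\ell$, which furnishes the second equality on each line.

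The genuine content of the theorem is the disjointness $I\cap J=\emptyset$, and this is where I expect the only real (albeit short) work to lie. Suppose, for contradiction, that $i\in I\cap J$. Then there exist $x\in\cV_1\setminus\{\bzero\}$ and $y\in\cV_2\setminus\{\bzero\}$ with $i\in\supp(x)\cap\supp(y)$, $\ConicSec_i(x)\subseteq\cV_1$, and $\ConicSec_i(y)\subseteq\cV_2$. Applying Lemma~\ref{l:ckiintersect} to this pair produces a non-null point $z\in\ConicSec_i(x)\cap\ConicSec_i(y)$, which must lie in $\cV_1\cap\cV_2=\{\bzero\}$, an absurdity. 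Hence $I$ and $J$ are disjoint, and the proof is complete. Everything apart from this final contradiction is a direct repackaging of the abstract convexity result in Corollary~\ref{c:ext-repr}, so no further obstacle is expected.
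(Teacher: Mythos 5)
Your proposal is correct and follows essentially the same route as the paper: both represent $\cV_1$ and $\cV_2$ as unions of the quasisectors they contain via Corollary~\ref{c:ext-repr}, derive the disjointness of the type sets $I$ and $J$ by applying Lemma~\ref{l:ckiintersect} to a hypothetical common type to produce a non-null point of $\cV_1\cap\cV_2$, and obtain the span equalities from the fact that each $\cV_\ell$ is a cone. No gaps.
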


\begin{proof}
 As $\cV_1\setminus \{ \bzero \}$ and $\cV_2\setminus \{ \bzero \}$ are complements of cones,
Corollary~\ref{c:ext-repr} yields that
\[
\begin{split}
\cV_1 & =\cup \left\{\ConicSec_i(y)\mid \ConicSec_i(y)\subseteq\cV_1, y\in \cV_1, i\in \supp(y)\right\} \; , \\
\cV_2 & =\cup \left\{ \ConicSec_j(y)\mid \ConicSec_j(y)\subseteq\cV_2, y\in \cV_2, j\in \supp(y)\right\} \; ,
\end{split}
\]
i.e. $\cV_1$ and $\cV_2$ are the union of the quasisectors contained
in them. We claim that the quasisectors contained in $\cV_1$ and
$\cV_2$ are of different type. To see this assume that, on the
contrary, there exist two points $y'\in \cV_1$, $y''\in \cV_2$ and
an index $i\in \supp(y')\cap \supp(y'')$ for which
$\ConicSec_i(y')\subseteq\cV_1$ and
$\ConicSec_i(y'')\subseteq\cV_2$. Then, by
Lemma~\ref{l:ckiintersect} applied to $y'$, $y''$ and $i$, we
conclude that the quasisectors $\ConicSec_i(y')$ and
$\ConicSec_i(y'')$, and so the conical hemispaces $\cV_1$ and
$\cV_2$, have a non-null point in common, which is a contradiction.

From the discussion above it follows that there exist disjoint subsets $I,J$ of $[n]$ such that
\[
\begin{split}
\cV_1 & =\cup \left\{\ConicSec_i(y)\mid \ConicSec_i(y)\subseteq \cV_1, y\in \cV_1, i\in I\right\} \; , \\
\cV_2 & =\cup \left\{ \ConicSec_j(y)\mid \ConicSec_j(y)\subseteq \cV_2, y\in \cV_2, j\in J\right\} \; .
\end{split}
\]
Finally, since the conical hemispaces $\cV_1$ and $\cV_2$ are cones,
the unions above coincide with their spans.
\end{proof}

\subsection{Tropical hemispaces, sectors and tropical semispaces}\label{ss:semiconv}

We now turn to convex sets using the homogenization technique.
Below we will be interested in the following objects.

\begin{definition}\label{def:convhemi}
We call {\rm  (tropical) hemispace} a convex set $\cH_1\subseteq\Rmax^n$,
for which there exists a convex set $\cH_2\subseteq\Rmax^n$ such
that $\cH_1\cap\cH_2=\emptyset$ and $\cH_1\cup\cH_2=\Rmax^n$. In
this case we call $(\cH_1,\cH_2)$ a {\rm complementary pair of hemispaces}.
We say that a complementary pair $(\cH_1,\cH_2)$ of hemispaces is {\rm non-trivial} when $\cH_1$ and $\cH_2$ are both non-empty.
\end{definition}

\begin{definition}\label{def:cone23}
For $y\in\Rmax^n$ and $i\in\supp(y)\cup\{n+1\}$, the coordinate
sections $\Sector_i(y) :=C^{\bunity}_{\ConicSec_i(y,\bunity)}$ are
called {\rm sectors of type} $i$.
\end{definition}

See Figure~\ref{f:segment-plane} below for an illustration of sectors in dimension $2$.

\begin{lemma}\label{LemmaCaractSectors}
For $y\in\Rmax^n$ and $i\in\supp(y)$, we have
\[
\begin{split}
&
\Sector_i(y) =
\left\{x\in \Rmax^n\mid \mysup_{j\in\supp(y)} x_j y_j^{-1}\oplus\bunity \leq x_i y_i^{-1}\;\makebox{and}\; x_j=\bzero\;\makebox{for all}\; j\notin\supp(y)\right\},\\
&\Sector_{n+1}(y)  =
\left\{x\in \Rmax^n\mid \mysup_{j\in\supp(y)} x_j y_j^{-1}\leq\bunity
\;\makebox{and}\; x_j=\bzero\;\makebox{for all}\; j\notin\supp(y)\right\},
\end{split}
\]
and so
\[
\begin{split}
&\CompSector_i(y)  =
\left\{x\in \Rmax^n\mid  \mysup_{j\in\supp(y)} x_j y_j^{-1}\oplus \bunity > x_i y_i^{-1}
\;\makebox{or}\; x_j>\bzero\;\makebox{for some}\; j\notin\supp(y)\right\}, \\
&\CompSector_{n+1}(y)  =
\left\{x\in \Rmax^n\mid \mysup_{j\in\supp(y)} x_j y_j^{-1}>\bunity
\;\makebox{or}\; x_j>\bzero\;\makebox{for some}\; j\notin\supp(y)\right\}.
\end{split}
\]
\end{lemma}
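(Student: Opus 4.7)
The plan is to unfold the definitions: by Definition~\ref{def:cone23}, $\Sector_i(y) = C^{\bunity}_{\ConicSec_i(y,\bunity)}$, so $x \in \Sector_i(y)$ is equivalent to $(x,\bunity) \in \ConicSec_i(y,\bunity)$, and we just need to apply the explicit formula~\eqref{conic-sec-compl} from Definition~\ref{def:consec23} with the vector $(y,\bunity)\in\Rmax^{n+1}$ playing the role of the ``$y$'' there. The two stated cases then correspond to $i\in\supp(y)$ versus $i=n+1$, both of which lie in $\supp(y,\bunity)=\supp(y)\cup\{n+1\}$ because the last coordinate is $\bunity\neq\bzero$.

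First I would handle the case $i\in\supp(y)$. Writing out $\ConicSec_i(y,\bunity)$ for the point $(x,\bunity)$, the support condition ``$(x,\bunity)_j=\bzero$ for $j\notin\supp(y,\bunity)=\supp(y)\cup\{n+1\}$'' reduces to $x_j=\bzero$ for $j\in[n]\setminus\supp(y)$, since the $(n+1)$-th coordinate is automatically $\bunity\neq\bzero$. The inequality $\mysup_{j\in\supp(y,\bunity)} (x,\bunity)_j (y,\bunity)_j^{-1} \leq (x,\bunity)_i (y,\bunity)_i^{-1}$ splits into a maximum over $j\in\supp(y)$ contributing $\mysup_{j\in\supp(y)} x_j y_j^{-1}$ and the term $j=n+1$ contributing $\bunity\cdot\bunity^{-1}=\bunity$. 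On the right side, since $i\in\supp(y)$, we get $x_i y_i^{-1}$. This yields precisely the first formula.

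Next I would handle $i=n+1$. The support condition is the same as before. The right side of the inequality is now $(x,\bunity)_{n+1} (y,\bunity)_{n+1}^{-1}=\bunity\cdot\bunity^{-1}=\bunity$, and the left side is as in the previous case. After simplification (since $\bunity$ on the left is absorbed into the bound $\bunity$ on the right), the inequality collapses to $\mysup_{j\in\supp(y)} x_j y_j^{-1} \leq \bunity$, giving the second formula.

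Finally, the two formulas for the complements $\CompSector_i(y)$ and $\CompSector_{n+1}(y)$ follow by logical negation: a point fails to lie in $\Sector_i(y)$ iff either the support condition fails (i.e.\ $x_j>\bzero$ for some $j\notin\supp(y)$) or the inequality is strict in the wrong direction. This is essentially bookkeeping, and I expect no real obstacle beyond keeping the indexing consistent between $\Rmax^n$ and $\Rmax^{n+1}$; the only mildly delicate point is noting that the $j=n+1$ term contributes the extra $\bunity$ seen in the first formula, which is what distinguishes a sector from a quasisector in the homogenized picture.
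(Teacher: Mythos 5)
Your proposal is correct and follows essentially the same route as the paper's proof: unfold Definition~\ref{def:cone23}, apply the formula of Definition~\ref{def:consec23} to the lifted point $(y,\bunity)$, note that $\supp(y,\bunity)=\supp(y)\cup\{n+1\}$, and then split the maximum into the $j\in\supp(y)$ terms and the extra $\bunity$ coming from $j=n+1$, treating the cases $i\in\supp(y)$ and $i=n+1$ separately. The final simplification $\mysup_{j\in\supp(y)} x_j y_j^{-1}\oplus\bunity\leq\bunity \iff \mysup_{j\in\supp(y)} x_j y_j^{-1}\leq\bunity$ and the passage to complements by negation are exactly as in the paper.
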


\begin{proof}
By Definition~\ref{def:consec23} applied to $(y,\bunity )$, we have
\[
\ConicSec_i(y,\bunity )= \left\{ (x,x_{n+1})\in\Rmax^{n+1}\mid \mysup_{j\in \supp(y)} x_j y_j^{-1}\oplus x_{n+1}\leq x_i y_i^{-1},  x_j=\bzero \makebox{ for all }j\notin \supp(y,\bunity) \right\}
\]
for $i\in\supp(y)$, and
\[
\ConicSec_{n+1}(y,\bunity )= \left\{(x,x_{n+1})\in\Rmax^{n+1}\mid \mysup_{j\in \supp(y)} x_j y_j^{-1} \oplus x_{n+1} \leq  x_{n+1}, x_j=\bzero \makebox{ for all } j\notin\supp(y,\bunity)\right\}
\]
because $\supp(y,\bunity)= \supp(y)\cup \{n+1\}$. Hence,
\begin{align*}
\Sector_i(y)=C^{\bunity}_{\ConicSec_i(y,\bunity)} = & \left\{ x\in\Rmax^{n}\mid (x,\bunity)\in\ConicSec_i(y,\bunity ) \right\} =\\
& \left\{ x\in\Rmax^{n}\mid \mysup_{j\in \supp(y)} x_j y_j^{-1}\oplus \bunity\leq x_i y_i^{-1},  x_j=\bzero \makebox{ for all }j\notin \supp(y) \right\}
\end{align*}
for $i\in\supp(y)$, and
\begin{align*}
\Sector_{n+1}(y)=C^{\bunity}_{\ConicSec_{n+1}(y,\bunity)}=&\left\{ x\in\Rmax^{n}\mid (x,\bunity)\in\ConicSec_{n+1}(y,\bunity ) \right\} =\\
&\left\{x\in\Rmax^{n+1}\mid \mysup_{j\in \supp(y)} x_j y_j^{-1} \leq \bunity , x_j=\bzero \makebox{ for all } j\notin\supp(y)\right\}
\end{align*}
since
$\mysup_{j\in \supp(y)} x_j y_j^{-1}\oplus \bunity  \leq  \bunity $ is equivalent to $\mysup_{j\in \supp(y)} x_j y_j^{-1} \leq  \bunity $.
\end{proof}

Let us make the following observation which will be useful in the next section.

\begin{lemma}\label{l:sn+1incr}
Let $y\in\Rmax^n$ and
$\alpha,\beta\in\Rmax$ be such that $\alpha\leq\beta$. Then $\Sector_{n+1}(\alpha
y)\subseteq\Sector_{n+1}(\beta y)$.
\end{lemma}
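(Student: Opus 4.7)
The plan is to use the explicit characterization of $\Sector_{n+1}$ given in Lemma~\ref{LemmaCaractSectors} and reduce the claimed inclusion to a trivial scalar inequality, after separating a degenerate case where the support of $\alpha y$ collapses.

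First I would dispose of the case $\alpha=\bzero$. Here $\alpha y=\bzero$, so $\supp(\alpha y)=\emptyset$ and the characterization of Lemma~\ref{LemmaCaractSectors} (with the empty support yielding a vacuous supremum) gives $\Sector_{n+1}(\alpha y)=\{\bzero\}$. Since the zero vector satisfies the defining inequality of $\Sector_{n+1}(\beta y)$ trivially (both the supremum over $\supp(\beta y)$ and the off-support constraint being vacuously satisfied), the inclusion holds in this case.

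Next, assume $\alpha\neq\bzero$. Then $\alpha$ is invertible and $\beta\geq\alpha>\bzero$, so $\beta$ is invertible as well. In particular $\supp(\alpha y)=\supp(\beta y)=\supp(y)$, so the off-support constraints ``$x_j=\bzero$ for $j\notin\supp(y)$'' coincide for both sectors. For any $x\in\Sector_{n+1}(\alpha y)$, Lemma~\ref{LemmaCaractSectors} gives
\[
\mysup_{j\in\supp(y)} x_j(\alpha y_j)^{-1}\leq\bunity,
\]
which, after multiplying by $\alpha$, is equivalent to $\mysup_{j\in\supp(y)} x_j y_j^{-1}\leq\alpha$. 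Using $\alpha\leq\beta$ and multiplying by $\beta^{-1}$, we obtain $\mysup_{j\in\supp(y)} x_j(\beta y_j)^{-1}\leq\bunity$, together with the same off-support constraint. Hence $x\in\Sector_{n+1}(\beta y)$, proving the inclusion.

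There is no serious obstacle here; the only care needed is to handle the degenerate case $\alpha=\bzero$ separately, since multiplication by $\alpha^{-1}$ is not available and the support of $\alpha y$ differs from that of $y$.
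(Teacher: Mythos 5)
Your proof is correct and follows essentially the same route as the paper's: both treat the degenerate case $\alpha=\bzero$ separately (where $\Sector_{n+1}(\alpha y)=\{\bzero\}$ and $\bzero$ lies in every sector of type $n+1$) and otherwise apply Lemma~\ref{LemmaCaractSectors} to rewrite the defining inequality of $\Sector_{n+1}(\alpha y)$ as $\mysup_{j\in\supp(y)} x_j y_j^{-1}\leq\alpha$, so that the inclusion reduces to $\alpha\leq\beta$. No gaps.
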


\begin{proof}
If $\alpha=\bzero$ then $\Sector_{n+1}(\alpha y)=\{ \bzero \}$, and
the inclusion follows since $\bzero\in\Sector_{n+1}(z)$ for each
$z\in\Rmax^n$. If $\bzero<\alpha\leq\beta$ then by Lemma~\ref{LemmaCaractSectors} we have
\[
\begin{split}
\Sector_{n+1}(\alpha y)  &= \left\{x\in \Rmax^n\mid \mysup_{j\in\supp(y)} x_j y_j^{-1}\leq\alpha
\;\makebox{and}\; x_j=\bzero\;\makebox{for all}\; j\notin\supp(y)\right\},\\
\Sector_{n+1}(\beta y)  &= \left\{x\in \Rmax^n\mid \mysup_{j\in\supp(y)} x_j y_j^{-1}\leq\beta
\;\makebox{and}\; x_j=\bzero\;\makebox{for all}\; j\notin\supp(y)\right\}
\end{split}
\]
and the inclusion follows since each $x\in\Sector_{n+1}(\alpha
y)$ satisfies $\oplus_{i\in\supp(y)}
x_iy_i^{-1}\leq\alpha\leq\beta$ and $x_j=\bzero$ for all $j\notin\supp(y)$.
\end{proof}

\begin{remark} Both $\Sector_i(y)$ and $\CompSector_i(y)$
(for $i\in\supp(y)\cup\{n+1\}$)
are convex sets and complements of each other,
hence they form a complementary pair of hemispaces.
\end{remark}

\begin{remark}
The notation for sectors and semispaces
is reversed as compared to the notation in
Nitica and Singer~\cite{NS-07I,NS-07II,NS-07}.
\end{remark}

\begin{theorem}\label{t:multiorder1-conv}
Let $y\in\Rmax^n$ and let $\cC\subseteq\Rmax^n$ be convex.
Then $y\in \cC$ if and only if
\begin{equation}\label{new-eq-56}
\Sector_i(y)\cap\cC\not =\emptyset
\end{equation}
for each $i\in\supp(y)$ and for $i=n+1$.
\end{theorem}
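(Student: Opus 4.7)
The plan is to reduce Theorem~\ref{t:multiorder1-conv} to the conic version Theorem~\ref{t:multiorder} via homogenization, exploiting the fact that $\Sector_i(y)=C^{\bunity}_{\ConicSec_i(y,\bunity)}$ by Definition~\ref{def:cone23}.

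For the ``only if'' direction, suppose $y\in\cC$. I would check directly from the explicit formulas in Lemma~\ref{LemmaCaractSectors} that $y\in\Sector_i(y)$ for every $i\in\supp(y)\cup\{n+1\}$. For $i\in\supp(y)$ this is the calculation $\mysup_{j\in\supp(y)}y_jy_j^{-1}\oplus\bunity=\bunity=y_iy_i^{-1}$, together with $y_j=\bzero$ for $j\notin\supp(y)$; for $i=n+1$ it is $\mysup_{j\in\supp(y)}y_jy_j^{-1}=\bunity$. In either case $y\in\Sector_i(y)\cap\cC$, establishing~\eqref{new-eq-56}. (The case $y=\bzero$ is handled by the observation that $\Sector_{n+1}(\bzero)=\{\bzero\}$, so the hypothesis forces $\bzero\in\cC$.)

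For the ``if'' direction, assume $\Sector_i(y)\cap\cC\neq\emptyset$ for each $i\in\supp(y)\cup\{n+1\}$. Consider the homogenization $V_\cC\subseteq\Rmax^{n+1}$, which is a cone (see the remark after Definition~\ref{def:hom}). Since $(y,\bunity)_{n+1}=\bunity\neq\bzero$, the vector $(y,\bunity)$ is non-null and has $\supp(y,\bunity)=\supp(y)\cup\{n+1\}$. For every $i\in\supp(y,\bunity)$, pick $x^i\in\Sector_i(y)\cap\cC$; then $(x^i,\bunity)\in V_\cC$ by the definition of $V_\cC$, and $(x^i,\bunity)\in\ConicSec_i(y,\bunity)$ by Definition~\ref{def:cone23}. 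Moreover $(x^i,\bunity)\neq\bzero$ because its last coordinate is $\bunity$. Therefore $(\ConicSec_i(y,\bunity)\setminus\{\bzero\})\cap V_\cC\neq\emptyset$ for every $i\in\supp(y,\bunity)$, so Theorem~\ref{t:multiorder} applied to the cone $V_\cC$ and the vector $(y,\bunity)$ yields $(y,\bunity)\in V_\cC$, which means precisely $y\in\cC$.

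There is no real obstacle: once homogenization is set up so that sectors lift to quasisectors and $\cC$ lifts to the cone $V_\cC$, the conical separation principle does all the work. The only mild subtlety is keeping track of the extra index $n+1$ (i.e., noting $\supp(y,\bunity)=\supp(y)\cup\{n+1\}$), which is exactly why the statement ranges over $\supp(y)\cup\{n+1\}$ rather than only over $\supp(y)$.
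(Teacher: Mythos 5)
Your proof is correct and follows essentially the same route as the paper: the ``only if'' part by noting $y\in\Sector_i(y)$, and the ``if'' part by homogenizing $\cC$ to the cone $V_\cC$ and applying Theorem~\ref{t:multiorder} to $(y,\bunity)$, whose support is $\supp(y)\cup\{n+1\}$. Your extra verification of $y\in\Sector_i(y)$ via Lemma~\ref{LemmaCaractSectors} and the remark on the $y=\bzero$ case are harmless additions to what the paper states more tersely.
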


\begin{proof}
The ``only if'' part is trivial, since $\Sector_i(y)$ contains $y$
for each $i\in\supp(y)$ and for $i=n+1$.

For the ``if'' part, consider the homogenization $V_\cC$ of $\cC$.
If \eqref{new-eq-56} is satisfied, then for each $i\in\supp(y)$ and for $i=n+1$ there exist
$x^i\in\Sector_i(y)\cap \cC=C^{\bunity}_{\ConicSec_i(y,\bunity)}\cap \cC$, which implies $(x^i,\bunity)\in
(\ConicSec_i(y,\bunity)\setminus \{(\bzero,\bzero)\})\cap V_\cC$. By
Theorem~\ref{t:multiorder}, it follows that $(y,\bunity)\in V_\cC$,
and so $y\in \cC$.
\end{proof}

Restating Theorem~\ref{t:multiorder1-conv} we obtain the following.

\begin{theorem}\label{t:multiorder2-conv}
Let $\cC\subseteq\Rmax^n$ be a convex set and take
$y\in\Rmax^n$. Then $y\notin \cC$ if and only if $\cC\subseteq
\CompSector_i(y)$ for some $i\in\supp(y)$ or $i=n+1$.
\end{theorem}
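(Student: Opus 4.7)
The plan is to obtain this statement as a direct contrapositive reformulation of Theorem~\ref{t:multiorder1-conv}, which has already been proved via homogenization and Theorem~\ref{t:multiorder}. The key observation is that the condition $\Sector_i(y)\cap\cC\neq\emptyset$ is exactly the negation of $\cC\subseteq\CompSector_i(y)$, since $\CompSector_i(y)$ is by definition the complement of $\Sector_i(y)$ in $\Rmax^n$.

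First I would negate both sides of the biconditional in Theorem~\ref{t:multiorder1-conv}. The negation of the left-hand side is simply $y\notin\cC$. For the right-hand side, the statement that $\Sector_i(y)\cap\cC\neq\emptyset$ holds for every $i\in\supp(y)\cup\{n+1\}$ is negated into the existence of some index $i\in\supp(y)\cup\{n+1\}$ for which $\Sector_i(y)\cap\cC=\emptyset$.

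Then I would translate this into the containment form: $\Sector_i(y)\cap\cC=\emptyset$ is equivalent to $\cC\subseteq \Rmax^n\setminus \Sector_i(y)=\CompSector_i(y)$. Combining both contrapositives yields that $y\notin\cC$ if and only if there exists $i\in\supp(y)$ or $i=n+1$ such that $\cC\subseteq\CompSector_i(y)$, which is precisely the claim of Theorem~\ref{t:multiorder2-conv}.

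There is essentially no obstacle in this proof: it is a purely logical rewording. The only thing worth making explicit is the equivalence between the emptiness of the intersection and the containment in the complement, but this is immediate from the set-theoretic definition of $\CompSector_i(y)$ and requires no use of convexity or the structure of $\Rmax^n$ beyond what is encoded in Theorem~\ref{t:multiorder1-conv} itself.
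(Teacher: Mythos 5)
Your proposal is correct and matches the paper exactly: the paper derives Theorem~\ref{t:multiorder2-conv} simply by ``restating'' Theorem~\ref{t:multiorder1-conv}, i.e.\ by the same negation of the biconditional together with the observation that $\Sector_i(y)\cap\cC=\emptyset$ is equivalent to $\cC\subseteq\CompSector_i(y)$. Nothing further is needed.
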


\begin{definition}\label{Def:Semispace}
A convex set of $\Rmax^n$ is called a {\rm (tropical) semispace} at
$y\in\Rmax^n$ if it is a maximal (with respect to inclusion)
convex set of $\Rmax^n$ not containing $y$.
\end{definition}

\begin{corollary}\label{c:semispaces-conv}
There are exactly the cardinality of $\supp(y)$ plus one semispaces at $y\in \Rmax^n$.
These are given by the convex sets
$\CompSector_i(y)$ for $i\in \supp(y)$ and $i=n+1$.
\end{corollary}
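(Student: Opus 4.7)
The plan is to mirror the proof of Corollary~\ref{c:semispaces} on quasisemispaces in the conical setting, but now in the convex setting using Theorem~\ref{t:multiorder2-conv} in place of Theorem~\ref{t:multiorder2}. I will show two things: (a) every semispace at $y$ coincides with some $\CompSector_i(y)$ for $i\in\supp(y)\cup\{n+1\}$, and (b) each such $\CompSector_i(y)$ is itself a semispace at $y$, with these sets pairwise distinct. The combination yields exactly $\card(\supp(y))+1$ semispaces.

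For part (a), let $\cC$ be a semispace at $y$. Since $\cC$ is a convex set with $y\notin\cC$, Theorem~\ref{t:multiorder2-conv} gives $\cC\subseteq\CompSector_i(y)$ for some $i\in\supp(y)\cup\{n+1\}$. By the Remark immediately following Lemma~\ref{LemmaCaractSectors}, $\CompSector_i(y)$ is itself a convex set, and since $y\in\Sector_i(y)$ we have $y\notin\CompSector_i(y)$. The maximality of $\cC$ then forces $\cC=\CompSector_i(y)$.

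For part (b), fix $i\in\supp(y)\cup\{n+1\}$ and suppose $\CompSector_i(y)\subsetneq\cC$ for some convex $\cC$ with $y\notin\cC$. Applying Theorem~\ref{t:multiorder2-conv} to $\cC$ produces a $j\in\supp(y)\cup\{n+1\}$ with $\cC\subseteq\CompSector_j(y)$, so $\CompSector_i(y)\subsetneq\CompSector_j(y)$, equivalently $\Sector_j(y)\subsetneq\Sector_i(y)$. The main obstacle is ruling this out, that is, showing the sectors are pairwise incomparable under inclusion. This I would handle by exhibiting explicit witnesses using Lemma~\ref{LemmaCaractSectors}: for distinct $i,j\in\supp(y)$ the point with $k$-th coordinate $y_k$ for $k\in\supp(y)\setminus\{j\}$ and $j$-th coordinate $\lambda y_j$ for some $\lambda>\bunity$ (and $\bzero$ elsewhere) lies in $\Sector_j(y)\setminus\Sector_i(y)$; for $i\in\supp(y)$ and $j=n+1$, the point $\lambda y$ with $\lambda>\bunity$ belongs to $\Sector_i(y)\setminus\Sector_{n+1}(y)$, while $\bzero\in\Sector_{n+1}(y)\setminus\Sector_i(y)$. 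This contradicts the strict inclusion and establishes maximality.

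Finally, the same incomparability shows that the $\card(\supp(y))+1$ sets $\{\CompSector_i(y)\mid i\in\supp(y)\cup\{n+1\}\}$ are pairwise distinct, so combining (a) and (b) gives exactly this number of semispaces at $y$. The whole proof is short modulo the incomparability step, which is the only place any real computation is needed.
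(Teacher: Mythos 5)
Your proposal is correct and its core (part (a)) is exactly the paper's proof: apply Theorem~\ref{t:multiorder2-conv} to a semispace at $y$ to get containment in some $\CompSector_i(y)$, then invoke maximality. Your part (b) — checking via explicit witnesses that the sectors are pairwise incomparable, so that each $\CompSector_i(y)$ really is maximal and the $\card(\supp(y))+1$ sets are distinct — is a valid verification of a point the paper leaves implicit, and your witnesses do work when checked against Lemma~\ref{LemmaCaractSectors}.
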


\begin{proof}
Suppose that $\cC$ is a semispace at $y\in \Rmax^n$. Since it is a convex
set not containing $y$, Theorem~\ref{t:multiorder2-conv} implies
that it is contained in $\CompSector_i(y)$ for some $i\in \supp (y)$
or $i=n+1$. By maximality, it follows that it coincides with
$\CompSector_i(y)$.
\end{proof}

The following corollary corresponds to Corollary~\ref{c:ext-repr}.

\begin{corollary}\label{c:ext-repr-conv}
Each convex set $\cC\subseteq\Rmax^n$ can be represented as the
intersection of the semispaces $\CompSector_i(y)$ containing it (where
$y\notin \cC$ and $i\in\supp(y)$ or $i=n+1$), and each complement $F$ of a
convex set can be represented as the union of the sectors
$\Sector_i(y)$ contained in $F$
(where $y\in F$ and $i\in\supp(y)$ or $i=n+1$).
\end{corollary}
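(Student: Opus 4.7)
The plan is to derive this corollary directly from Theorem~\ref{t:multiorder2-conv}, paralleling the way Corollary~\ref{c:ext-repr} was obtained from Theorem~\ref{t:multiorder2}. The key observation, already made in Corollary~\ref{c:semispaces-conv}, is that the sets $\CompSector_i(y)$ for $i\in\supp(y)\cup\{n+1\}$ are precisely the semispaces at $y$, and that $y$ lies in each corresponding $\Sector_i(y)$ (this latter fact follows from Lemma~\ref{LemmaCaractSectors}, by noting that $y_j y_j^{-1}=\bunity$ for $j\in\supp(y)$, and $\bunity\leq\bunity$ for the $i=n+1$ case).

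First I would establish the intersection representation. Let $\cC\subseteq\Rmax^n$ be convex. The inclusion
\[
\cC\;\subseteq\;\bigcap \left\{\CompSector_i(y)\;\Big|\; \cC\subseteq\CompSector_i(y),\; y\notin \cC,\; i\in\supp(y)\cup\{n+1\}\right\}
\]
is trivial. For the reverse inclusion, I would pick any $y\notin\cC$ and apply Theorem~\ref{t:multiorder2-conv} to obtain some $i\in\supp(y)\cup\{n+1\}$ with $\cC\subseteq\CompSector_i(y)$. Since $y\in\Sector_i(y)$ for such $i$, we have $y\notin\CompSector_i(y)$, so $y$ is excluded from the intersection. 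This proves the first claim.

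For the second (union) representation, write $F=\complement \cC$ for a convex set $\cC$. For any $y\in F$, since $y\notin\cC$, Theorem~\ref{t:multiorder2-conv} again supplies an index $i\in\supp(y)\cup\{n+1\}$ with $\cC\subseteq \CompSector_i(y)$, equivalently $\Sector_i(y)\subseteq F$; moreover $y\in\Sector_i(y)$. Hence
\[
F\;\subseteq\;\bigcup\left\{\Sector_i(y)\;\Big|\; \Sector_i(y)\subseteq F,\; y\in F,\; i\in\supp(y)\cup\{n+1\}\right\},
\]
and the reverse inclusion is immediate from the defining condition $\Sector_i(y)\subseteq F$.

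There is no serious obstacle here: the statement is essentially a dualization of Theorem~\ref{t:multiorder2-conv}. The only point requiring a little care is bookkeeping around the extra index $i=n+1$ (absent in the conic case treated by Corollary~\ref{c:ext-repr}), which simply corresponds to the additional semispace type coming from the homogenization coordinate, and is already accounted for in Lemma~\ref{LemmaCaractSectors} and Corollary~\ref{c:semispaces-conv}.
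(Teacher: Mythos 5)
Your proof is correct and follows exactly the route the paper intends: the paper states Corollary~\ref{c:ext-repr-conv} without proof as an immediate dualization of Theorem~\ref{t:multiorder2-conv} (mirroring how Corollary~\ref{c:ext-repr} follows from Theorem~\ref{t:multiorder2}), which is precisely the argument you spell out. The details you supply --- the trivial inclusions, the exclusion of each $y\notin\cC$ via the semispace $\CompSector_i(y)$ it determines, and the fact that $y\in\Sector_i(y)$ for $i\in\supp(y)\cup\{n+1\}$ --- are all accurate.
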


\begin{lemma}\label{l:csiintersect}
For any two points $x,y\in\Rmax^n$ and $i\in\supp(x)\cap\supp(y)$ or $i=n+1$
the intersection $\Sector_i(x)\cap \Sector_i(y)$ is non-empty.
\end{lemma}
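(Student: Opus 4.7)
The claim splits naturally into two cases according to whether $i=n+1$ or $i\in\supp(x)\cap\supp(y)$.

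The case $i=n+1$ is immediate: by the characterization in Lemma~\ref{LemmaCaractSectors}, the zero vector $\bzero\in\Rmax^n$ satisfies $\oplus_{j\in\supp(z)}\bzero_j z_j^{-1}=\bzero\leq\bunity$ and $\bzero_j=\bzero$ for all $j\notin\supp(z)$ for every $z\in\Rmax^n$, so $\bzero$ lies in $\Sector_{n+1}(x)\cap\Sector_{n+1}(y)$.

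For the case $i\in\supp(x)\cap\supp(y)$, the plan is to transfer the problem to the conical setting via homogenization, where Lemma~\ref{l:ckiintersect} is already available. Consider the homogenized vectors $(x,\bunity),(y,\bunity)\in\Rmax^{n+1}$. Since $i\in\supp(x)\cap\supp(y)\subseteq\supp(x,\bunity)\cap\supp(y,\bunity)$, Lemma~\ref{l:ckiintersect} applied to these two vectors at index $i$ produces a non-null point $\tilde z\in\Rmax^{n+1}$ with coordinates
\[
\tilde z_j=\min\{x_i^{-1}x_j,\;y_i^{-1}y_j\}\;\text{ for } j\in[n],\qquad \tilde z_{n+1}=\min\{x_i^{-1},\;y_i^{-1}\}=(x_i\oplus y_i)^{-1},
\]
which belongs to both $\ConicSec_i(x,\bunity)$ and $\ConicSec_i(y,\bunity)$.

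The key observation is that $\tilde z_{n+1}\neq\bzero$ because $x_i,y_i\neq\bzero$, so we can rescale. Since $\ConicSec_i(x,\bunity)$ and $\ConicSec_i(y,\bunity)$ are cones, the point $\mu\tilde z$ with $\mu:=(\tilde z_{n+1})^{-1}=x_i\oplus y_i$ still lies in their intersection and has $(n{+}1)$-th coordinate equal to $\bunity$. By the definition of coordinate section (Definition~\ref{def:dehom}) and of sector (Definition~\ref{def:cone23}), the vector $z\in\Rmax^n$ consisting of the first $n$ coordinates of $\mu\tilde z$ therefore belongs to $\Sector_i(x)\cap\Sector_i(y)$, completing the proof. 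There is no real obstacle here: the argument is a direct application of the already established Lemma~\ref{l:ckiintersect} combined with the homogenization machinery set up in Subsection~\ref{ss:semiconv}.
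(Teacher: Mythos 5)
Your proof is correct and follows essentially the same route as the paper's: both apply Lemma~\ref{l:ckiintersect} to the homogenized points $(x,\bunity)$ and $(y,\bunity)$, observe that the resulting common point of the quasisectors has non-null $(n{+}1)$-st coordinate, and rescale it to land in the coordinate section $C^{\bunity}$, which is exactly $\Sector_i(x)\cap\Sector_i(y)$. The only (harmless) difference is that you dispatch the case $i=n+1$ separately by noting $\bzero\in\Sector_{n+1}(z)$ for every $z$, whereas the paper treats both cases uniformly through the homogenization.
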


\begin{proof}
Consider the points $(x,\bunity)$ and $(y,\bunity)$ and observe
that for $\cV:=\ConicSec_i(x,\bunity)\cap \ConicSec_i(y,\bunity)$ we
have:
\[
\begin{split}
C_{\cV}^{\bunity} & =\left\{z\in\Rmax^n \mid (z,\bunity)\in \cV\right\} =\left\{z\in\Rmax^n \mid (z,\bunity)\in (\ConicSec_i(x,\bunity)\cap \ConicSec_i(y,\bunity) )\right\} \\
& =\left\{z\in\Rmax^n \mid (z,\bunity)\in \ConicSec_i(x,\bunity)\right\}\cap
\left\{z\in\Rmax^n \mid (z,\bunity)\in  \ConicSec_i(y,\bunity) \right\} \\
& = C_{\ConicSec_i(x,\bunity)}^{\bunity} \cap C_{\ConicSec_i(y,\bunity)}^{\bunity}=
\Sector_i(x)\cap \Sector_i(y)\; .
\end{split}
\]

For any $i\in \supp(x,\bunity)\cap\supp(y,\bunity)=
(\supp(x)\cap\supp(y))\cup \{n+1\}$, Lemma~\ref{l:ckiintersect}
applied to $(x,\bunity)$ and $(y,\bunity)$ provides a non-null point
$z$ in $\cV=\ConicSec_i(x,\bunity)\cap \ConicSec_i(y,\bunity)$. This
point is defined by~\eqref{zpoint} applied to $(x,\bunity)$ and
$(y,\bunity)$, so $z_{n+1}=\min \{ x_i^{-1} , y_i^{-1} \}$ if $i\in
\supp(x)\cap\supp(y)$ and $z_{n+1}=\bunity$ if $i=n+1$. In both
cases we have $z_{n+1}\neq \bzero$, and then we conclude that
$z_{n+1}^{-1}(z_1,\ldots ,z_n)\in C_{\cV}^{\bunity}=
\Sector_i(x)\cap \Sector_i(y)$ because $(z_{n+1}^{-1}z_1,\ldots
,z_{n+1}^{-1}z_n,\bunity)=z_{n+1}^{-1}z\in \cV$.
\end{proof}

Corollary~\ref{c:ext-repr-conv} and Lemma~\ref{l:csiintersect} imply
the following (preliminary) result on general hemispaces (an
analogue of Theorem~\ref{t:conhemsem}).

\begin{theorem}\label{t:convhemsem}
For any complementary pair of hemispaces $\cH_1$ and $\cH_2$
there exist disjoint subsets $I,J\subseteq [n+1]$ such that
\begin{equation}\label{union-hemi-12}
\begin{split}
\cH_1&=\cup\left\{\Sector_i(y)\mid \Sector_i(y)\subseteq\cH_1, i\in I, y\in \cH_1\right\} =\conv \left(\cup \left\{\Sector_i(y)\mid \Sector_i(y)\subseteq\cH_1, i\in I, y\in \cH_1\right\} \right) ,\\
\cH_2&=\cup\left\{\Sector_j(y)\mid \Sector_j(y)\subseteq\cH_2, j\in J, y\in\cH_2\right\} =\conv\left(\cup \left\{\Sector_j(y)\mid \Sector_j(y)\subseteq\cH_2, j\in J, y\in \cH_2\right\}\right) .
\end{split}
\end{equation}
\end{theorem}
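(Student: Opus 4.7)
The plan is to mimic the proof of Theorem~\ref{t:conhemsem} (its conical counterpart), replacing Corollary~\ref{c:ext-repr} and Lemma~\ref{l:ckiintersect} with their convex analogues Corollary~\ref{c:ext-repr-conv} and Lemma~\ref{l:csiintersect}. Since $(\cH_1,\cH_2)$ is a complementary pair of hemispaces, each $\cH_k$ is the complement of the convex set $\cH_{3-k}$, so Corollary~\ref{c:ext-repr-conv} applies and gives
\[
\cH_k \;=\; \bigcup\bigl\{\Sector_i(y)\mid \Sector_i(y)\subseteq\cH_k,\ y\in \cH_k,\ i\in \supp(y)\cup\{n+1\}\bigr\}
\]
for $k=1,2$.

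The key step will be to show that the sector types appearing in the two unions can be chosen disjoint. Suppose, for the sake of contradiction, that some index $i$ appears in both representations: there exist $x\in\cH_1$ and $y\in\cH_2$ with $\Sector_i(x)\subseteq\cH_1$, $\Sector_i(y)\subseteq\cH_2$, and either $i=n+1$ or $i\in\supp(x)\cap\supp(y)$. By Lemma~\ref{l:csiintersect}, the intersection $\Sector_i(x)\cap\Sector_i(y)$ is non-empty, so any point in it lies simultaneously in $\cH_1$ and $\cH_2$, contradicting $\cH_1\cap\cH_2=\emptyset$. Letting $I\subseteq[n+1]$ be the set of indices $i$ that occur as types of sectors in the representation of $\cH_1$ and $J$ the analogous set for $\cH_2$, we conclude $I\cap J=\emptyset$, which gives the first equality in~\eqref{union-hemi-12} for both hemispaces.

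For the second equality in each line of~\eqref{union-hemi-12}, note that each sector $\Sector_i(y)$ is convex and is contained in $\cH_k$; since $\cH_k$ itself is convex, it contains the convex hull of the union of these sectors. Conversely, the union is trivially contained in its convex hull. So the convex hull coincides with the union, which equals $\cH_k$.

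The main (mild) obstacle is to make sure that the index $i=n+1$ is treated uniformly with the indices $i\in\supp(y)$. This is handled automatically because Lemma~\ref{l:csiintersect} is stated for $i\in\supp(x)\cap\supp(y)$ \emph{or} $i=n+1$, and Corollary~\ref{c:ext-repr-conv} takes the sector index range to be $\supp(y)\cup\{n+1\}$; hence the disjointness argument and the subsets $I,J\subseteq[n+1]$ cover the case $i=n+1$ without any additional work.
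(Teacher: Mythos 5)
Your proposal is correct and follows essentially the same route as the paper's own proof: both apply Corollary~\ref{c:ext-repr-conv} to write each hemispace as a union of the sectors it contains, use Lemma~\ref{l:csiintersect} to derive a contradiction if sectors of the same type were contained in both hemispaces, and pass to convex hulls using convexity of the $\cH_k$. No gaps.
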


\begin{proof}
As $\cH_1$ and $\cH_2$ are complements of convex sets,
Corollary~\ref{c:ext-repr-conv} yields that $\cH_1$ and $\cH_2$ are
the union of the sectors contained in them. The sectors contained in
$\cH_1$ and $\cH_2$ should be of different type, since otherwise
there exist two points  $y'\in \cH_1, y''\in \cH_2,$ and an index
$i\in (\supp(y')\cap \supp(y''))\cup \{n+1\}$ for which
$\Sector_i(y')\subseteq\cH_1$ and $\Sector_i(y'')\subseteq\cH_2$.
Then, by Lemma~\ref{l:csiintersect} applied to $y'$, $y''$ and $i$,
we conclude that the hemispaces $\cH_1$ and $\cH_2$ have a common
point, which is a contradiction.

Finally, since the hemispaces $\cH_1$ and $\cH_2$ are convex sets,
the unions in~\eqref{union-hemi-12} coincide with their convex hulls.
\end{proof}

Theorem~\ref{t:convhemsem} can be used to describe hemispaces in
the case $n=2$. Indeed, in this case, the non-empty and disjoint
sets $I$ and $J$ appearing in its formulation should satisfy $I\cup
J\subseteq \{1,2,3\}$. It follows that one of the sets $I$ or $J$
consists of only one index. Thus, one of the hemispaces $\cH_1$ or
$\cH_2$ is the union of sectors of the same type. By careful
inspection of all possible cases, for this hemispace we obtain the
sets shown on the diagrams of Figures~\ref{f:hemispace-plane}
and~\ref{f:hemispace-line}. Using the form of typical
(tropical) segments
on the plane, shown on the left-hand side of
Figure~\ref{f:segment-plane}, it can be checked graphically that all
these sets and their complements are indeed convex sets (and
hence, indeed, hemispaces).
All figures are done in the max-times semifield $\Rmaxtimes$.

\begin{figure}
\begin{center}
\begin{picture}(0,0)%
\includegraphics{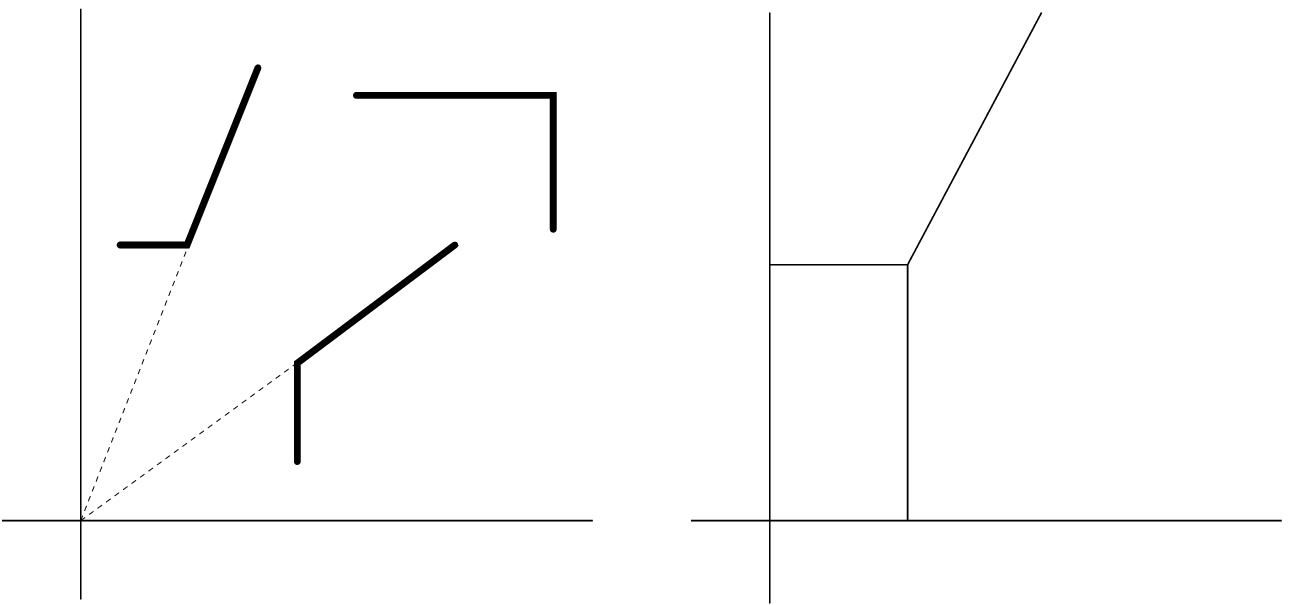}%
\end{picture}%
\setlength{\unitlength}{1657sp}%
\begingroup\makeatletter\ifx\SetFigFont\undefined%
\gdef\SetFigFont#1#2#3#4#5{%
  \reset@font\fontsize{#1}{#2pt}%
  \fontfamily{#3}\fontseries{#4}\fontshape{#5}%
  \selectfont}%
\fi\endgroup%
\begin{picture}(14669,6839)(879,-6878)
\put(11476,-3076){\makebox(0,0)[lb]{\smash{{\SetFigFont{10}{12.0}{\rmdefault}{\mddefault}{\updefault}{\color[rgb]{0,0,0}$y$}%
}}}}
\put(12241,-4066){\makebox(0,0)[lb]{\smash{{\SetFigFont{10}{12.0}{\rmdefault}{\mddefault}{\updefault}{\color[rgb]{0,0,0}$\cS_1(y)$}%
}}}}
\put(10036,-4471){\makebox(0,0)[lb]{\smash{{\SetFigFont{10}{12.0}{\rmdefault}{\mddefault}{\updefault}{\color[rgb]{0,0,0}$\cS_3(y)$}%
}}}}
\put(10351,-1546){\makebox(0,0)[lb]{\smash{{\SetFigFont{10}{12.0}{\rmdefault}{\mddefault}{\updefault}{\color[rgb]{0,0,0}$\cS_2(y)$}%
}}}}
\end{picture}%
\end{center}
\caption{Max-times segments (on the left) and sectors based at a point $y$ with
full support $\{1,2\}$ (on the right) in dimension $2$.}\label{f:segment-plane}
\end{figure}

\begin{figure}
\begin{center}
\includegraphics[width=9.5cm]{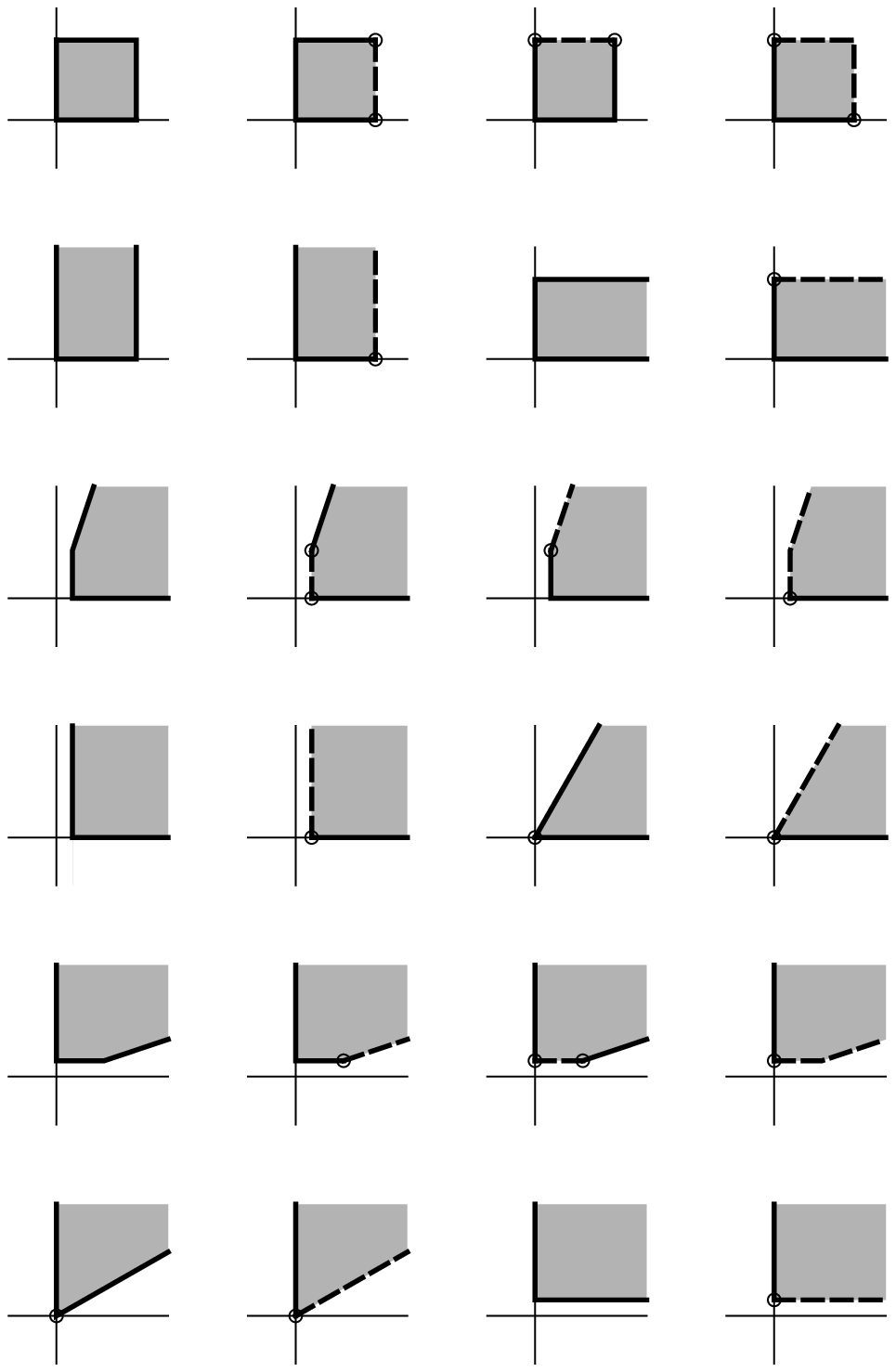}
\end{center}
\caption{The hemispaces in dimension $2$ which can be obtained as
unions of sectors of the same type based at points with full support $\{1,2\}$.}\label{f:hemispace-plane}
\end{figure}

\begin{figure}
\begin{center}
\includegraphics[width=9.5cm]{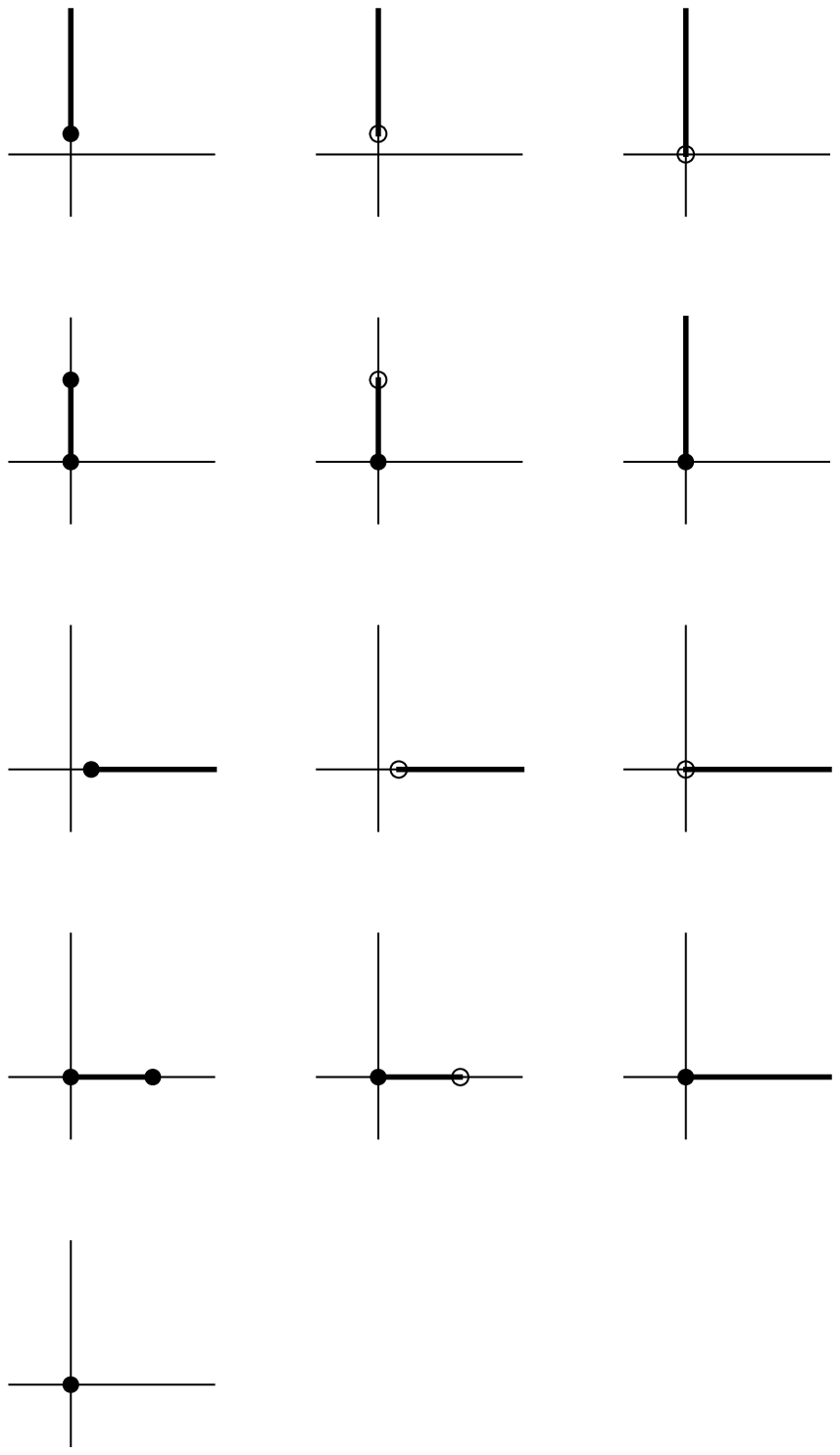}
\end{center}
\caption{The hemispaces in dimension $2$ which can be obtained as unions of sectors of
the same type based at points with non-full support ($\{1\}$ or $\{2\}$).}\label{f:hemispace-line}
\end{figure}

\section{Tropical hemispaces}\label{s:hemi}

\subsection{Homogenization and $(P,R)$-decompositions.}\label{ss:hemihomog}

Let us start with $(P,R)$-decompositions of quasisectors and
sectors.

\begin{proposition}\label{p:ckigenconv}
For $y\in \Rmax^n$ and $i\in\supp(y)$, the quasisectors
$\ConicSec_i(y)$ and the sectors $\Sector_i(y)$ and
$\Sector_{n+1}(y)$ can be represented as
\begin{equation}\label{e:compl-int-repr-conv}
\begin{split}
\ConicSec_i(y)& = \cone \left(\left\{ \uvector{i}\oplus y_j y_i^{-1}\uvector{j}\mid j\in \supp(y) \right\} \right) \; ,\\
\Sector_i(y)& = \left\{ y_i\uvector{i} \right\} \oplus \cone \left( \left\{ \uvector{i}\oplus y_jy_i^{-1}\uvector{j}\mid j\in \supp(y)\right\} \right) \; ,\\
 \Sector_{n+1}(y)& = \conv \left( \left\{\bzero\right\} \cup \left\{ y_j\uvector{j}\mid j\in\supp(y)\right\} \right)\; .
\end{split}
\end{equation}
\end{proposition}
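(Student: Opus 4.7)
The plan is to verify each of the three identities by direct double-inclusion arguments, using Definition~\ref{def:consec23} for $\ConicSec_i(y)$ and the explicit descriptions given in Lemma~\ref{LemmaCaractSectors} for $\Sector_i(y)$ and $\Sector_{n+1}(y)$. For uniform notation I will write $g^{(j)}:=\uvector{i}\oplus y_j y_i^{-1}\uvector{j}$ for $j\in\supp(y)$ (so $g^{(i)}=\uvector{i}$).

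For the first identity, each $g^{(j)}$ has support $\{i,j\}\subseteq\supp(y)$ and satisfies $(g^{(j)})_k y_k^{-1}=y_i^{-1}=(g^{(j)})_i y_i^{-1}$ for both $k\in\{i,j\}$, so $g^{(j)}\in\ConicSec_i(y)$; since $\ConicSec_i(y)$ is a cone (as noted after Definition~\ref{def:consec23}), this gives $\supseteq$. For the reverse inclusion, given $x\in\ConicSec_i(y)$, set $\lambda_j:=x_j y_j^{-1}y_i$ for $j\in\supp(y)$ and check coordinate-by-coordinate that $x=\mysup_{j\in\supp(y)}\lambda_j g^{(j)}$: the $i$-th coordinate of the right-hand side equals $y_i\bigl(\mysup_{j\in\supp(y)}x_jy_j^{-1}\bigr)=y_i\cdot x_iy_i^{-1}=x_i$ by the defining inequality of $\ConicSec_i(y)$, the $j$-th coordinate for $j\in\supp(y)\setminus\{i\}$ equals $x_j$, and both sides vanish at $j\notin\supp(y)$.

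The second identity reduces to the first once we observe that, by Lemma~\ref{LemmaCaractSectors}, $\Sector_i(y)=\{x\in\ConicSec_i(y)\mid x_iy_i^{-1}\geq\bunity\}=\{x\in\ConicSec_i(y)\mid x_i\geq y_i\}$. The inclusion $\supseteq$ is straightforward: any $y_i\uvector{i}\oplus r$ with $r$ in the cone generated by the $g^{(j)}$'s already lies in $\ConicSec_i(y)$ by the first identity, and its $i$-th coordinate is at least $y_i$. Conversely, if $x\in\Sector_i(y)$ then $x\in\ConicSec_i(y)$, so by the first identity $x\in\cone(\{g^{(j)}\}_{j\in\supp(y)})$; since $x_i\geq y_i$, we have $y_i\uvector{i}\oplus x=x$, exhibiting $x$ as an element of the claimed Minkowski sum.

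For the third identity, Lemma~\ref{LemmaCaractSectors} gives $\Sector_{n+1}(y)=\{x\in\Rmax^n\mid x_j\leq y_j \text{ for } j\in\supp(y),\; x_j=\bzero \text{ otherwise}\}$. The inclusion $\supseteq$ is immediate since any convex combination $\mysup_{j}\lambda_j(y_j\uvector{j})\oplus\lambda_0\bzero$ with $\mysup_j\lambda_j\oplus\lambda_0=\bunity$ forces each $\lambda_j\leq\bunity$, whence the $j$-th coordinate is at most $y_j$. For $\subseteq$, given $x\in\Sector_{n+1}(y)$, set $\lambda_j:=x_jy_j^{-1}\leq\bunity$ for $j\in\supp(y)$ and $\lambda_0:=\bunity$ for the generator $\bzero$; then $\mysup_j\lambda_j\oplus\lambda_0=\bunity$ and $\mysup_{j\in\supp(y)}\lambda_j(y_j\uvector{j})\oplus\lambda_0\bzero=x$. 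The only subtle point is the degenerate case when $\mysup_j\lambda_j<\bunity$, which is precisely why the point $\bzero$ must be included among the generators; no real obstacle arises, and the three verifications together complete the proof.
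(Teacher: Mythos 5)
Your proof is correct. For the first identity you follow essentially the same argument as the paper: the coefficients $\lambda_j=x_jy_j^{-1}y_i$ are exactly the $y_iy_j^{-1}x_j$ used there, and the coordinate checks coincide. Where you genuinely diverge is in the second and third identities. The paper does not verify these directly; instead it homogenizes, applying the first identity to the lifted point $(y,\bunity)\in\Rmax^{n+1}$ to get $\ConicSec_i(y,\bunity)=\cone(U_i)$, and then invokes Proposition~\ref{p:gen-hom} to read off the sets $P_{U_i}$ and $R_{U_i}$ of the resulting $(P,R)$-decomposition of the section $\Sector_i(y)=C^{\bunity}_{\ConicSec_i(y,\bunity)}$. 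You instead work entirely in $\Rmax^n$, using the explicit inequality descriptions of Lemma~\ref{LemmaCaractSectors} — in particular your observation that $\Sector_i(y)=\{x\in\ConicSec_i(y)\mid x_i\geq y_i\}$, which cleanly reduces the second identity to the first via the absorption $y_i\uvector{i}\oplus x=x$. Your route is more elementary and self-contained for these two parts; the paper's route buys consistency with the homogenization machinery that drives the rest of the argument (Definition~\ref{def:cone23} defines sectors as coordinate sections, so deriving their decompositions through Proposition~\ref{p:gen-hom} is the ``native'' path) and avoids repeating coordinate computations. Both are valid, and your verification of the degenerate role of the generator $\bzero$ in $\Sector_{n+1}(y)$ is a worthwhile remark that the paper handles implicitly through the computation of $P_{U_{n+1}}$.
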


\begin{proof}
We claim that if $x\in \ConicSec_i(y)$, then
\[
x=\bigoplus_{j\in \supp(y)}  y_i y_j^{-1}x_j
(\uvector{i}\oplus y_j y_i^{-1}\uvector{j}) \; .
\]
Indeed, we have
\[
\left(\bigoplus_{j\in \supp(y)}  y_i y_j^{-1}x_j
(\uvector{i}\oplus y_j y_i^{-1}\uvector{j})\right)_i=\bigoplus_{j\in \supp(y)}  y_i y_j^{-1}x_j=x_i
\]
since $i\in \supp(y)$ and $ y_i y_j^{-1}x_j \leq x_i$ for all $j \in \supp
(y)$ by Definition~\ref{def:consec23}. Furthermore for $k\in\supp(y)\setminus\{i\}$ we have
\[
\left(\bigoplus_{j\in \supp(y)}  y_i y_j^{-1}x_j
(\uvector{i}\oplus y_j y_i^{-1}\uvector{j})\right)_k= y_i y_k^{-1}x_k y_k y_i^{-1}=x_k,
\]
and for $k\in [n] \setminus \supp(y)$ we have
\[
\left(\bigoplus_{j\in \supp(y)}  y_i y_j^{-1}x_j
(\uvector{i}\oplus y_j y_i^{-1}\uvector{j})\right)_k= \bigoplus_{j\in \supp(y)}  y_i y_j^{-1}x_j
(\uvector{i}\oplus y_j y_i^{-1}\uvector{j})_k=\bzero=x_k.
\]
This proves our claim.
Using this property,
we conclude that
\[
\ConicSec_i(y) \subseteq \cone \left( \left\{
\uvector{i}\oplus y_j y_i^{-1}\uvector{j}\mid j\in \supp(y) \right\}\right)\; .
\]
For the converse inclusion, let us show that the vector
$\uvector{i}\oplus y_j y_i^{-1}\uvector{j}$ belongs to
$\ConicSec_i(y)$ for any $j \in \supp (y)$. Indeed, we have $\left(
\uvector{i}\oplus y_j y_i^{-1}\uvector{j}\right)_k=\bzero$ for any
$k\in [n]\setminus\{i,j\}$, and so in particular for any $k\in
[n]\setminus \supp(y)$, and
\[
\bigoplus_{k\in\supp(\uvector{i}\oplus y_j y_i^{-1}\uvector{j})}
\left( \uvector{i}\oplus y_j y_i^{-1}\uvector{j}\right)_k y_k^{-1}=
y_i^{-1}\oplus y_j y_i^{-1} y_j^{-1}= y_i^{-1}=\left(
\uvector{i}\oplus y_j y_i^{-1}\uvector{j}\right)_iy_i^{-1} \; .
\]
Thus, $\uvector{i}\oplus y_j y_i^{-1}\uvector{j}\in\ConicSec_i(y)$
by Definition~\ref{def:consec23}. Since $\ConicSec_i(y)$ is a
cone and $\uvector{i}\oplus y_j y_i^{-1}\uvector{j}\in
\ConicSec_i(y)$ for any $j\in\supp(y)$, we conclude that
\[
\cone \left( \left\{ \uvector{i}\oplus y_j y_i^{-1}\uvector{j}\mid j\in
\supp(y) \right\} \right) \subseteq \ConicSec_i(y)\; .
\]
This completes the proof of the first equality in~\eqref{e:compl-int-repr-conv}.

From the first equality in~\eqref{e:compl-int-repr-conv} it follows that, given $y\in \Rmax^{n}$, for all
$i\in \supp(y,\bunity ) = \supp(y)\cup \{n+1\}$ we have
$\ConicSec_{i}(y,\bunity)=\cone(U_{i})$, where
\begin{equation}\label{eq:label1-ivan}
U_{i}=\left\{ \uvector{i}\oplus (y,\bunity)_{j}(y,\bunity)_{i}^{-1}\uvector{j}\mid j\in \supp(y)\cup
\{n+1\}\right\} \; .
\end{equation}
Hence by Definition~\ref{def:cone23} and Proposition~\ref{p:gen-hom}, it follows
that for all $i\in \supp(y)\cup \{n+1\}$,
\begin{equation}\label{eq:label2-ivan}
\Sector_{i}(y)=C_{\ConicSec_{i}(y,\bunity)}^{\bunity}=
C_{\cone(U_{i})}^{\bunity}=\conv (P_{U_{i}})\oplus \cone(R_{U_{i}}) \; ,
\end{equation}
where
\begin{equation}\label{eq:label3-ivan}
P_{U_{i}}=\left\{ y\in \Rmax^{n}\mid \exists \mu \neq \bzero,(\mu y,\mu )\in U_{i}\right\} \; ,
\end{equation}
and
\begin{equation}\label{eq:label4-ivan}
R_{U_{i}}=\left\{ z\in \Rmax^{n}\mid (z,\bzero)\in U_{i} \right\} \; .
\end{equation}

Let $i\in \supp(y)$ (hence $i\leq n$). Then by~\eqref{eq:label1-ivan} we have
\[
U_{i}=\left\{\uvector{i}\oplus y_{j}y_{i}^{-1}\uvector{j}\mid j\in \supp(y)\right\} \cup
\left\{ \uvector{i}\oplus y_{i}^{-1}\uvector{n+1}\right\} \; .
\]
Therefore by~\eqref{eq:label3-ivan} $z\in P_{U_{i}}$ if and only if
there exists $\mu \neq \bzero$ such that $(\mu z,\mu
)=\uvector{i}\oplus y_{i}^{-1}\uvector{n+1}$ which yields $\mu
=y_{i}^{-1}$ and $y_{i}^{-1}z=\mu z=\uvector{i}$, whence
$z=y_{i}\uvector{i}$. Thus
$P_{U_{i}}=\left\{y_{i}\uvector{i}\right\}$. Furthermore,
by~\eqref{eq:label4-ivan} $z\in R_{U_{i}}$ if and only if
$(z,\bzero)=\uvector{i}\oplus y_{j}y_{i}^{-1}\uvector{j}$ for some
$j\in \supp(y)$. Consequently, by~\eqref{eq:label2-ivan}, we obtain
the second equality of~\eqref{e:compl-int-repr-conv}.

Finally, let $i=n+1.$ Then by~\eqref{eq:label1-ivan},
\[
U_{n+1}=\left\{ \uvector{n+1}\oplus y_{j}\uvector{j}\mid j\in \supp(y)\right\}\cup \left\{\uvector{n+1} \right\},
\]
whence $(z,\bzero)\notin U_{n+1}$ for all $z\in \Rmax^{n},$ and
hence by~\eqref{eq:label4-ivan}, $R_{U_{n+1}}=\emptyset $.
Furthermore, for $\mu \neq \bzero$ we have $(\mu z,\mu )\in U_{n+1}$
if and only if either $(\mu z,\mu )=\uvector{n+1}\oplus
y_{j}\uvector{j}$ for some $j\in \supp(y)$ or $(\mu z,\mu)=\uvector{n+1}$.
In the first case we obtain $\mu =\bunity$ and
$z=\mu z=y_{j}\uvector{j}$ for some $j\in \supp(y)$, and in the
second case we obtain $\mu =\bunity$ and $z=\mu z=\bzero$. Thus
by~\eqref{eq:label3-ivan}, $P_{U_{n+1}}=\left\{\bzero\right\} \cup
\left\{ y_j\uvector{j}\mid j\in\supp(y)\right\}$, whence by
$R_{U_{n+1}}=\emptyset $ and~\eqref{eq:label2-ivan},
we obtain the third equality of~\eqref{e:compl-int-repr-conv}.
\end{proof}

We now obtain $(P,R)$-decompositions of hemispaces (respectively,
conical hemispaces) by uniting the $(P,R)$-decompositions of sectors
(respectively, quasisectors) contained in them.

\begin{theorem}\label{t:onetwo}
For any hemispace $\cH\subset \Rmax^n$ (resp. any conical hemispace
$\cV\subset \Rmax^n$) a $(P,R)$-decomposition can be obtained by
uniting the $(P,R)$-decompositions given in~\eqref{e:compl-int-repr-conv} of all
$\Sector_i(y)\subseteq\cH$ (resp. $\ConicSec_i(y)\subseteq\cV$),
where $y\in\cH$ and $i\in\supp(y)\cup\{n+1\}$ (resp. $y\in\cV$ and
$i\in\supp(y)$).

If $\cH$ is a hemispace, the resulting $(P,R)$-decomposition is given by
\begin{equation}\label{e:PRhemi}
\begin{split}
P & =
\begin{cases}
\{y_i\uvector{i}\mid \Sector_i(y)\subseteq\cH\},\ \text{ if there is
no $y\in \Rmax^n$ such that
$\Sector_{n+1}(y)\subseteq\cH$} \\
\{\bzero\}\cup \{y_i\uvector{i}\mid \Sector_i(y)\subseteq\cH\}\cup
\{y_j\uvector{j}\mid\Sector_{n+1}(y)\subseteq\cH,\; j\in\supp(y)\}\text{ otherwise}.
\end{cases}\\
R & =\left\{\uvector{i}\oplus y_j y_i^{-1} \uvector{j}\mid\Sector_i(y)\subseteq\cH,\; j\in\supp(y)\right\} \; ,
\end{split}
\end{equation}
and if $\cV$ is a conical hemispace, then we have
\begin{equation}\label{e:PRquasihemi}
P=\emptyset \; ,\
R=\left\{\uvector{i}\oplus y_j y_i^{-1} \uvector{j}\mid\ConicSec_i(y)\subseteq\cV,\; j\in\supp(y)\right\} \; .
\end{equation}
\end{theorem}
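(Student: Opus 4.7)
The plan is to realize $\cH$ (resp.\ $\cV$) as the convex hull of sectors (resp.\ as the cone generated by quasisectors) using Theorem~\ref{t:convhemsem} (resp.\ Theorem~\ref{t:conhemsem}), expand each sector or quasisector into its own $(P,R)$-decomposition by invoking Proposition~\ref{p:ckigenconv}, and then amalgamate the individual decompositions via Theorem~\ref{t:gathering} (resp.\ Lemma~\ref{l:GatheringGenretorsCones}). The explicit form of $P$ and $R$ in~\eqref{e:PRhemi} and~\eqref{e:PRquasihemi} will then just be a matter of reading off the generators listed in~\eqref{e:compl-int-repr-conv}.

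For the conical case the amalgamation is painless. Theorem~\ref{t:conhemsem} writes
\[
\cV=\cone\bigl(\cup\{\ConicSec_i(y)\mid \ConicSec_i(y)\subseteq\cV,\ y\in\cV,\ i\in\supp(y)\}\bigr),
\]
and each $\ConicSec_i(y)$ is, by~\eqref{e:compl-int-repr-conv}, the cone spanned by $\{\uvector{i}\oplus y_jy_i^{-1}\uvector{j}\mid j\in\supp(y)\}$. Lemma~\ref{l:GatheringGenretorsCones} then identifies the cone of a union of cones with the cone of the union of their generating sets, and~\eqref{e:PRquasihemi} drops out immediately.

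For the hemispace case, Theorem~\ref{t:convhemsem} expresses $\cH$ as the convex hull of the sectors $\Sector_i(y)\subseteq\cH$ with $y\in\cH$ and $i\in\supp(y)\cup\{n+1\}$; the disjointness of $I$ and $J$ in that theorem is not needed here, since passing to the larger union over all such sectors does not change the convex hull. Proposition~\ref{p:ckigenconv} supplies the per-sector $(P_\ell,R_\ell)$, and I would apply Theorem~\ref{t:gathering} through the support-inclusion alternative~\eqref{e:unit-gen-C3}. For $\Sector_i(y)$ with $i\in\supp(y)$ we have $\conv(P_\ell)=\{y_i\uvector{i}\}$ with support $\{i\}$, while every generator $\uvector{i}\oplus y_jy_i^{-1}\uvector{j}$ of $R_\ell$ has support $\supseteq\{i\}$ (both $y_i$ and $y_j$ are nonzero since $i,j\in\supp(y)$), so~\eqref{e:unit-gen-C3} holds; for $\Sector_{n+1}(y)$ the set $R_\ell$ is empty and~\eqref{e:unit-gen-C3} is vacuous. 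Theorem~\ref{t:gathering} then yields $\cH=\conv(\cup P_\ell)\oplus\cone(\cup R_\ell)$, and collecting the $P_\ell$'s and $R_\ell$'s from~\eqref{e:compl-int-repr-conv} gives~\eqref{e:PRhemi}, with $\bzero$ appearing in $P$ exactly when some $\Sector_{n+1}(y)$ is contained in $\cH$.

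The one step that really needs attention is the verification of a gathering hypothesis from Theorem~\ref{t:gathering}. The closedness option~\eqref{e:unit-gen-C2} is unavailable since hemispaces need not be closed, and invoking the global recessivity option~\eqref{e:unit-gen-C1} directly would beg the question, because showing $R_\ell\subseteq\rec\cH$ is essentially as hard as the conclusion. The support-inclusion criterion~\eqref{e:unit-gen-C3} is precisely what makes the proof go through cheaply: it is immediate from the very tight link, visible in~\eqref{e:compl-int-repr-conv}, between the generators of $\conv(P_\ell)$ and those of $\cone(R_\ell)$ for sectors.
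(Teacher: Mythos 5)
Your proposal is correct and follows essentially the same route as the paper's proof: Theorem~\ref{t:convhemsem} (resp.\ Theorem~\ref{t:conhemsem}) to write the hemispace as the convex hull of its sectors (resp.\ the span of its quasisectors), Proposition~\ref{p:ckigenconv} for the per-sector decompositions, and then Theorem~\ref{t:gathering} via the support-inclusion condition~\eqref{e:unit-gen-C3} (resp.\ Lemma~\ref{l:GatheringGenretorsCones}) to amalgamate, with the same verification $\supp(y_i\uvector{i})\subseteq\supp(\uvector{i}\oplus y_jy_i^{-1}\uvector{j})$. Your remark on why conditions~\eqref{e:unit-gen-C1} and~\eqref{e:unit-gen-C2} are unusable is a correct reading of the situation, and the rest matches the paper's argument.
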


\begin{proof}
By Theorem~\ref{t:convhemsem} any hemispace $\cH$ can be represented
as the convex hull of all the sectors contained in $\cH$. Consider
the $(P,R)$-decomposition of sectors given in the last two lines
of~\eqref{e:compl-int-repr-conv}. The pair of sets $(P,R)$ which
determines the $(P,R)$-decomposition of the sector $\Sector_i(y)$,
for any $y\in\Rmax^n$ and $i\in\supp(y)$, satisfy
Condition~\eqref{e:unit-gen-C3} of Theorem~\ref{t:gathering} due to
the fact that $\supp(y_i\uvector{i})\subseteq
\supp(\uvector{i}\oplus y_j y_i^{-1}\uvector{j})$ for all
$j\in\supp(y)$, and the pair of sets deterining the
$(P,R)$-decomposition of the sector $\Sector_{n+1}(y)$ satisfies
this condition trivially (since $R$ is empty). Therefore we can
combine all the $(P,R)$-decompositions of the sectors contained in $\cH$
(in other words, take the unions of all $P$ and all $R$ separately) to obtain a
$(P,R)$-decomposition of $\cH$. To form the set $P$, let us first
collect, using Theorem~\ref{t:convhemsem} and the second line
of~\eqref{e:compl-int-repr-conv}, all the vectors $y_i e^i$ such
that $\Sector_i(y)\subseteq\cH$ (where $i\in\supp(y)$). If we have
$\Sector_{n+1}(y)\subseteq\cH$ for some $y\in\Rmax^n$ then, using
Theorem~\ref{t:convhemsem} and the third line
of~\eqref{e:compl-int-repr-conv}, we also add the zero vector and
all the vectors $y_j\uvector{j}$, where $j\in\supp(y)$. This
explains the expression for $P$ in~\eqref{e:PRhemi}, in both cases.
The set $R$ is composed of the vectors $\uvector{i}\oplus
y_jy_i^{-1}\uvector{j}$ appearing on the second line
of~\eqref{e:compl-int-repr-conv}, such that
$\Sector_i(y)\subseteq\cH$ and $j\in\supp(y)$. This explains the
last line of~\eqref{e:PRhemi}.

By Theorem~\ref{t:conhemsem}, any conical hemispace $\cV$ is the
linear span of all the quasisectors contained in $\cV$.
Consider the $(P,R)$-decomposition of quasisectors given in the
first line of~\eqref{e:compl-int-repr-conv}.
By Lemma~\ref{l:GatheringGenretorsCones} the union of all the sets $R$ appearing in these
$(P,R)$-decompositions of the quasisectors contained in $\cV$ gives the set $R$ appearing in a
$(P,R)$-decomposition of $\cV$ (in which $P=\emptyset$).
By Theorem~\ref{t:conhemsem} and the
first line of~\eqref{e:compl-int-repr-conv}, $R$ consists of all the
vectors $\uvector{i}\oplus y_j y_i^{-1}\uvector{j}$ such that
$\ConicSec_i(y)\subseteq\cV$ and $j\in\supp(y)$. This
shows~\eqref{e:PRquasihemi}.
\end{proof}

Let us make an observation on the $(P,R)$-decomposition of
Theorem~\ref{t:onetwo}.

\begin{lemma}\label{l:sizspanr}
Let $\cH\subseteq \Rmax^n$ be a hemispace, $z\in\Rmax^n$, and let $R$ be defined
by the last line of~\eqref{e:PRhemi}. If
$\Sector_i(z)\subseteq\cH$ then $z\in\cone(R)$.
\end{lemma}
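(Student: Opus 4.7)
My plan is to exploit directly the definition of $R$ from~\eqref{e:PRhemi}. Since $R$ there is built solely from sectors of type $i\in\supp(y)$ (i.e., with $i\le n$), the hypothesis $\Sector_i(z)\subseteq\cH$ has to be read with $i\in\supp(z)$: the edge case $i=n+1$ cannot occur under the conclusion $z\in\cone(R)$, because $\Sector_{n+1}(y)$ sectors contribute only to the $P$-part of the decomposition~\eqref{e:PRhemi}. Taking $y=z$ in the definition of $R$ then yields directly that every vector $\uvector{i}\oplus z_j z_i^{-1}\uvector{j}$, for $j\in\supp(z)$, lies in $R$.

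Two consequences of this inclusion drive the proof. First, the particular choice $j=i$ gives $\uvector{i}\oplus \bunity\,\uvector{i}=\uvector{i}\in R$, so $z_i\uvector{i}\in\cone(R)$. Second, from the second identity of~\eqref{e:compl-int-repr-conv} in Proposition~\ref{p:ckigenconv} we have
\[
\Sector_i(z)=\{z_i\uvector{i}\}\oplus\cone\bigl(\{\uvector{i}\oplus z_j z_i^{-1}\uvector{j}\mid j\in\supp(z)\}\bigr),
\]
and since $z\in\Sector_i(z)$, one may write $z=z_i\uvector{i}\oplus r$ with $r$ in the cone generated by a subset of $R$. Combining the two observations gives $z\in\cone(R)$.

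Alternatively, one may verify the explicit identity $z=\bigoplus_{j\in\supp(z)} z_i\bigl(\uvector{i}\oplus z_j z_i^{-1}\uvector{j}\bigr)$ coordinate-wise: the $i$-th coordinate equals $z_i$, for $k\in\supp(z)\setminus\{i\}$ only $j=k$ contributes and yields $z_i\cdot z_k z_i^{-1}=z_k$, while coordinates outside $\supp(z)$ vanish. Either way, the argument is essentially mechanical, with no real obstacle; the only subtle point to state clearly is the restriction to $i\in\supp(z)$.
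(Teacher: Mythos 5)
Your proposal is correct and, in its final "alternative" form, is exactly the paper's proof: the paper also observes that $\Sector_i(z)\subseteq\cH$ places all vectors $\uvector{i}\oplus z_jz_i^{-1}\uvector{j}$, $j\in\supp(z)$, into $R$ and then writes $z=z_i\bigl(\bigoplus_{j\in\supp(z)}(\uvector{i}\oplus z_jz_i^{-1}\uvector{j})\bigr)$. Your remark that the statement is to be read with $i\in\supp(z)$ (the case $i=n+1$ being excluded) is a correct reading of how the lemma is invoked in the proof of Theorem~\ref{t:hemihomog}.
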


\begin{proof}
Since $\Sector_i(z)\subseteq\cH$, by the last line of~\eqref{e:PRhemi} the set $R$
contains all the vectors of the form
$\uvector{i}\oplus z_jz_i^{-1}\uvector{j}$ for $j\in\supp(z)$.
Representing
\[
z=z_i \left(\bigoplus_{j\in\supp(z)} (\uvector{i}\oplus z_jz_i^{-1}\uvector{j})\right)\, ,
\]
we conclude that $z\in\cone(R)$.
\end{proof}

We shall need the following characterization of joined pairs of
conical hemispaces by means of sections.

\begin{lemma}\label{l:CharaHemiWithSections}
Let $\cV_1,\cV_2\subseteq \Rmax^{n+1}$ be cones. Then,
$(\cV_1,\cV_2)$ is a joined pair of conical hemispaces if and only
if the following statements hold:
\begin{equation}\label{e:sectnonzero}
C_{\cV_1}^{\alpha}\cap C_{\cV_2}^{\alpha}=\emptyset\; \text{ and } \; C_{\cV_1}^{\alpha}\cup C_{\cV_2}^{\alpha}=\Rmax^n\ \text{for all } \alpha\neq\bzero ,
\end{equation}
\begin{equation}\label{e:sectzero}
C_{\cV_1}^{\bzero}\cap C_{\cV_2}^{\bzero}=\{\bzero\} \; \text{ and } \; C_{\cV_1}^{\bzero}\cup C_{\cV_2}^{\bzero}=\Rmax^n .
\end{equation}
\end{lemma}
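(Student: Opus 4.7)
The proof is essentially a direct translation between the two joined-pair conditions $\cV_1\cap\cV_2=\{\bzero\}$, $\cV_1\cup\cV_2=\Rmax^{n+1}$ and their ``sliced'' counterparts on the coordinate sections. The plan is to check both directions by splitting according to whether the last coordinate $\alpha$ equals $\bzero$ or not, since $(x,\bzero)\in\cV_1\cap\cV_2$ is allowed (it corresponds to $\bzero\in\cV_1\cap\cV_2$, which holds for any cones), whereas $(x,\alpha)\in\cV_1\cap\cV_2$ with $\alpha\neq\bzero$ is forbidden. This case distinction is why the two displayed conditions in the lemma statement have different right-hand sides ($\emptyset$ vs.\ $\{\bzero\}$).

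For the ``only if'' direction, I would start from the assumption that $(\cV_1,\cV_2)$ is a joined pair of conical hemispaces. For any $\alpha\neq\bzero$ and $x\in\Rmax^n$, if $x\in C_{\cV_1}^\alpha\cap C_{\cV_2}^\alpha$, then by Definition~\ref{def:dehom} $(x,\alpha)\in\cV_1\cap\cV_2=\{\bzero\}$, forcing $\alpha=\bzero$, a contradiction; so $C_{\cV_1}^\alpha\cap C_{\cV_2}^\alpha=\emptyset$. The union equality follows because $(x,\alpha)\in\Rmax^{n+1}=\cV_1\cup\cV_2$ gives $x\in C_{\cV_1}^\alpha\cup C_{\cV_2}^\alpha$. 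The case $\alpha=\bzero$ is handled analogously, the only change being that $(x,\bzero)\in\cV_1\cap\cV_2=\{\bzero\}$ now forces $x=\bzero$ (and conversely $\bzero\in C_{\cV_1}^\bzero\cap C_{\cV_2}^\bzero$ because both cones contain $\bzero$), which produces the intersection equal to $\{\bzero\}$ rather than $\emptyset$.

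For the ``if'' direction, assume~\eqref{e:sectnonzero} and~\eqref{e:sectzero} and pick an arbitrary $(x,\alpha)\in\cV_1\cap\cV_2$. If $\alpha\neq\bzero$ then $x$ lies in $C_{\cV_1}^\alpha\cap C_{\cV_2}^\alpha=\emptyset$, which is impossible, so we must have $\alpha=\bzero$; then~\eqref{e:sectzero} gives $x=\bzero$, so $(x,\alpha)=\bzero$. This shows $\cV_1\cap\cV_2\subseteq\{\bzero\}$, and the reverse inclusion is automatic since $\cV_1$ and $\cV_2$ are cones. To check $\cV_1\cup\cV_2=\Rmax^{n+1}$, take $(x,\alpha)\in\Rmax^{n+1}$ and again split on whether $\alpha$ is $\bzero$: in either case the appropriate section equality of~\eqref{e:sectnonzero} or~\eqref{e:sectzero} places $x$ in $C_{\cV_1}^\alpha\cup C_{\cV_2}^\alpha$, hence $(x,\alpha)\in\cV_1\cup\cV_2$.

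There is no real obstacle here; the lemma is a bookkeeping statement whose proof consists of applying the definition of coordinate section (Definition~\ref{def:dehom}) and of joined pair (Definition~\ref{def:conhemi}) to each point of $\Rmax^{n+1}$, with the sole subtlety being the asymmetry between $\alpha=\bzero$ and $\alpha\neq\bzero$ caused by the fact that every cone automatically contains the origin. No use is made of Proposition~\ref{p:section} or of the finer structural results on hemispaces proved earlier, only the set-theoretic bijection between $\Rmax^{n+1}$ and $\Rmax^n\times\Rmax$ given by the last coordinate.
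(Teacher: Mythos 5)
Your proof is correct and follows essentially the same route as the paper's: both directions are handled by the same case split on $\alpha=\bzero$ versus $\alpha\neq\bzero$, translating pointwise between $(x,\alpha)\in\cV_1\cap\cV_2$ (resp.\ $\cV_1\cup\cV_2$) and membership of $x$ in the corresponding sections. The paper's argument is the same set-theoretic bookkeeping, including the observation that $\bzero\in C_{\cV_1}^{\bzero}\cap C_{\cV_2}^{\bzero}$ because both cones contain the origin.
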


\begin{proof}
Assume that $(\cV_1,\cV_2)$ is a joined pair of conical hemispaces,
i.e. $\cV_1\cup \cV_2=\Rmax^{n+1}$ and $\cV_1\cap \cV_2=\{\bzero
\}$.

Let $\alpha\in\Rmax$. Then, given any $x\in\Rmax^n$, since $\cV_1\cup \cV_2=\Rmax^{n+1}$
we have $(x,\alpha)\in\cV_1 \cup \cV_2$, and so $x\in C_{\cV_1}^{\alpha}\cup C_{\cV_2}^{\alpha}$. Since $x\in\Rmax^n$ is arbitrary, this shows $C_{\cV_1}^{\alpha}\cup C_{\cV_2}^{\alpha}=\Rmax^n$ for any $\alpha\in\Rmax$.

Suppose now that $\alpha\neq\bzero$ and $C_{\cV_1}^{\alpha}\cap C_{\cV_2}^{\alpha}\neq \emptyset$. Let
$x\in C_{\cV_1}^{\alpha}\cap C_{\cV_2}^{\alpha}$. Then, we have $(x,\alpha)\in \cV_1\cap \cV_2$, which contradicts the fact that $\cV_1\cap \cV_2=\{\bzero \}$ because $\alpha\neq\bzero$. This proves that
$C_{\cV_1}^{\alpha}\cap C_{\cV_2}^{\alpha}= \emptyset$ for $\alpha\neq\bzero$.

Since $\bzero\in \cV_1\cap \cV_2$, we have $\bzero\in C_{\cV_1}^{\bzero}\cap C_{\cV_2}^{\bzero}$. Furthermore, if for $x\neq \bzero$ we had $x\in C_{\cV_1}^{\bzero}\cap C_{\cV_2}^{\bzero}$, then
the non-null vector $(x,\bzero)$ would belong to $\cV_1\cap \cV_2$, contradicting the fact that $\cV_1\cap \cV_2=\{\bzero \}$. This shows that $C_{\cV_1}^{\bzero}\cap C_{\cV_2}^{\bzero}=\{\bzero\}$,
and completes the proof of~\eqref{e:sectnonzero} and~\eqref{e:sectzero}.

Assume now that~\eqref{e:sectnonzero} and~\eqref{e:sectzero} are satisfied.

Given any $x\in \Rmax^n$ and $\alpha \in \Rmax$, since $C_{\cV_1}^{\alpha}\cup C_{\cV_2}^{\alpha}=\Rmax^n$,
we have $x\in C_{\cV_1}^{\alpha}\cup C_{\cV_2}^{\alpha}$. It follows that
$(x,\alpha)\in \cV_1\cup \cV_2$. Since $x\in \Rmax^n$ and $\alpha \in \Rmax$ are arbitrary,
we conclude that $\cV_1\cup \cV_2=\Rmax^{n+1}$.

Finally, let $(x,\alpha)\in \cV_1\cap \cV_2$. Then $x\in
C_{\cV_1}^{\alpha}\cap C_{\cV_2}^{\alpha}$, and
by~\eqref{e:sectnonzero} and~\eqref{e:sectzero} we necessarily have
$x=\bzero$ and $\alpha=\bzero$. This shows that $\cV_1\cap
\cV_2=\{\bzero \}$, and completes the proof of the fact that
$(\cV_1,\cV_2)$ is a joined pair of conical hemispaces.
\end{proof}

The following theorem relates complementary pairs of hemispaces in
$\Rmax^n$ with joined pairs of conical hemispaces in $\Rmax^{n+1}$
through the concept of section.

\begin{theorem}\label{t:hemihomog}
Let $\cH_1,\cH_2\subseteq \Rmax^n$ be a complementary pair of
hemispaces, and let $(P_1,R_1)$ and $(P_2,R_2)$ determine
respectively the $(P,R)$-decompositions of $\cH_1$ and $\cH_2$ given
by Theorem~\ref{t:onetwo}. Then, the cones
\begin{equation}\label{e:cv1}
\cV_1:=\cone \left( \left\{ (x, \bunity) \mid x \in P_1\right\} \cup
\left\{ (y, \bzero) \mid y \in R_1\right\} \right)
\end{equation}
and
\begin{equation}\label{e:cv2}
\cV_2:=\cone \left( \left\{ (x, \bunity) \mid x \in P_2\right\} \cup
\left\{ (y, \bzero) \mid y \in R_2\right\} \right)
\end{equation}
satisfy $\cH_1=C_{\cV_1}^{\bunity}$ and $\cH_2=C_{\cV_2}^{\bunity}$,
and $(\cV_1,\cV_2)$ is a joined pair of conical hemispaces in
$\Rmax^{n+1}$.
\end{theorem}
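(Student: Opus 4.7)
The plan is to combine Corollary~\ref{CoroOfp:gen-hom} with the section characterization of joined pairs in Lemma~\ref{l:CharaHemiWithSections}. Since Theorem~\ref{t:onetwo} provides the $(P,R)$-decompositions $\cH_i=\conv(P_i)\oplus\cone(R_i)$, applying Corollary~\ref{CoroOfp:gen-hom} directly yields $\cH_i=C_{\cV_i}^{\bunity}$. The remaining task is thus to verify that $(\cV_1,\cV_2)$ satisfies the section conditions~\eqref{e:sectnonzero} and~\eqref{e:sectzero}.

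The non-zero sections are handled quickly. For $\alpha\in\Rinv$, Proposition~\ref{p:section} gives $C_{\cV_i}^{\alpha}=\alpha\cH_i$. Since multiplication by $\alpha$ is a bijection of $\Rmax^n$ and $(\cH_1,\cH_2)$ is complementary, so is $(\alpha\cH_1,\alpha\cH_2)$, which yields~\eqref{e:sectnonzero}. For $\alpha=\bzero$, direct inspection of the defining generators shows $C_{\cV_i}^{\bzero}=\cone(R_i)$: if $(x,\bzero)$ is written as a tropical linear combination of vectors $(p,\bunity)$ and $(r,\bzero)$, comparing the $(n+1)$-th coordinates forces the coefficients of all $(p,\bunity)$-generators to vanish. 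So the task reduces to showing $\cone(R_1)\cup\cone(R_2)=\Rmax^n$ and $\cone(R_1)\cap\cone(R_2)=\{\bzero\}$.

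For the union, let $x\in\Rmax^n\setminus\{\bzero\}$. Each set $\{\lambda\in\Rinv:\lambda x\in\cH_k\}$ is tropically convex in $\Rinv$, hence a usual interval, so for some fixed $k\in\{1,2\}$ (say $k=1$) we have $\lambda x\in\cH_1$ for all sufficiently large $\lambda$. Theorem~\ref{t:convhemsem} assigns such a $\lambda x$ to a sector $\Sector_{i_\lambda}(y_\lambda)\subseteq\cH_1$. If $i_\lambda\leq n$ for some such $\lambda$, Proposition~\ref{p:ckigenconv} expresses $\lambda x$ as a tropical linear combination of elements of $R_1$ (using in particular that $\uvector{i_\lambda}\in R_1$, obtained by taking $j=i_\lambda$ in the formula of Theorem~\ref{t:onetwo}), so $x\in\cone(R_1)$. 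Otherwise $i_\lambda=n+1$ for all large $\lambda$, and the sectors $\Sector_{n+1}(y_\lambda)\subseteq\cH_1$ must contain $\lambda x$, which forces $(y_\lambda)_j\geq \lambda x_j$ for each $j\in\supp(x)$; as $\lambda\to\infty$ this implies that the entire ray $\{\mu\uvector{j}:\mu\in\Rmax\}$ is contained in $\cH_1$ for each $j\in\supp(x)$, hence $\Sector_j(\uvector{j})\subseteq\cH_1$ and $\uvector{j}\in R_1$, so $x=\bigoplus_{j\in\supp(x)} x_j\uvector{j}\in\cone(R_1)$.

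The hard part is the intersection. The naive attempt via Lemma~\ref{lemma:new-lemma93} only gives $x\in\rec\cH_1\cap\rec\cH_2$, which in general can be strictly larger than $\{\bzero\}$, as simple examples already show. Instead one must use the combinatorial structure provided by Theorem~\ref{t:convhemsem}: the sector types contained in $\cH_1$ and $\cH_2$ belong to disjoint subsets $I,J\subseteq[n+1]$, so every generator in $R_1$ (resp.\ $R_2$) has its leading coordinate in $I\cap[n]$ (resp.\ $J\cap[n]$). For a putative common nonzero $x$ with representations $x=\bigoplus_\alpha\lambda_\alpha r_\alpha=\bigoplus_\beta\mu_\beta s_\beta$ in the two cones, comparing the two expressions coordinate-by-coordinate on indices from $I$ and $J$ produces an alternating chain of indices between $I\cap[n]$ and $J\cap[n]$ with matching scaling coefficients; since $[n]$ is finite, such a chain must close up, and one then derives a contradiction via Lemma~\ref{l:csiintersect} with the disjointness $\cH_1\cap\cH_2=\emptyset$. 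This combinatorial tracing is the most delicate step of the argument.
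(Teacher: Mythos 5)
Your reduction to the section conditions of Lemma~\ref{l:CharaHemiWithSections}, the treatment of the sections with $\alpha\neq\bzero$ via Proposition~\ref{p:section}, the identification $C_{\cV_k}^{\bzero}=\cone(R_k)$, and the proof that $\cone(R_1)\cup\cone(R_2)=\Rmax^n$ are all sound; the union argument is a legitimate variant of the paper's (the paper instead applies Theorem~\ref{t:multiorder2-conv} to the point $\alpha z$ itself and invokes Lemma~\ref{l:sizspanr} and the monotonicity Lemma~\ref{l:sn+1incr}, but your route through the interval $\{\lambda\mid\lambda x\in\cH_k\}$ and the sector containing $\lambda x$ reaches the same conclusion). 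The problem is the intersection $\cone(R_1)\cap\cone(R_2)=\{\bzero\}$, which you yourself flag as the delicate step and then only sketch. The ``alternating chain between $I\cap[n]$ and $J\cap[n]$ that must close up'' is not carried out, and as described it does not obviously terminate in a contradiction: Lemma~\ref{l:csiintersect} produces a common point of two sectors \emph{of the same type}, but Theorem~\ref{t:convhemsem} already tells you that no type occurs in both hemispaces, so there is no pair of same-type sectors to feed into that lemma. Knowing only that the leading indices of the $R_1$- and $R_2$-generators lie in disjoint sets does not by itself preclude a common nonzero element of the two cones, because the second indices $j$ of the generators $\uvector{i}\oplus y_jy_i^{-1}\uvector{j}$ are unconstrained. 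A correct argument has to return to the ambient disjointness $\cH_1\cap\cH_2=\emptyset$, and for that you need the sets $P_1,P_2$, which your sketch never uses.

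The paper's actual argument is short and is worth recording. If $z\in\cone(R_1)\cap\cone(R_2)$ is nonzero, then $z\geq\beta(\uvector{i}\oplus y_jy_i^{-1}\uvector{j})$ for some $\beta\in\Rinv$ and some generator of $R_1$ arising from $\Sector_i(y)\subseteq\cH_1$; by~\eqref{e:PRhemi} this forces $y_i\uvector{i}\in P_1$, and in particular $z\geq\beta\uvector{i}$. Symmetrically $z\geq\beta'\uvector{i'}$ with $y'_{i'}\uvector{i'}\in P_2$. Choosing $\lambda\geq y_i\beta^{-1}\oplus y'_{i'}(\beta')^{-1}$ gives $y_i\uvector{i}\leq\lambda z$ and $y'_{i'}\uvector{i'}\leq\lambda z$, hence $\lambda z=y_i\uvector{i}\oplus\lambda z\in\conv(P_1)\oplus\cone(R_1)=\cH_1$ and likewise $\lambda z\in\cH_2$, contradicting $\cH_1\cap\cH_2=\emptyset$. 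The key idea your proposal is missing is precisely this absorption of a $\conv(P_k)$-term by a large scalar multiple of $z$, which converts membership in $\cone(R_k)$ into membership in $\cH_k$ itself; no combinatorial tracing of indices is needed.
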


\begin{proof}
In the first place, observe that by Corollary~\ref{CoroOfp:gen-hom} we have $C_{\cV_1}^{\bunity}=\cH_1$ and
$C_{\cV_2}^{\bunity}=\cH_2$

To prove that $(\cV_1,\cV_2)$ is a joined pair of conical
hemispaces, we show that~\eqref{e:sectnonzero}
and~\eqref{e:sectzero} are satisfied and then use
Lemma~\ref{l:CharaHemiWithSections}.

Let us first prove~\eqref{e:sectnonzero}. Since $(\cH_1,\cH_2)=(C_{\cV_1}^{\bunity},C_{\cV_2}^{\bunity})$ is a complementary pair of hemispaces, it follows that~\eqref{e:sectnonzero} holds for
$\alpha=\bunity$.
For the case of general $\alpha\neq \bzero$, observe that
\begin{equation}\label{e:sectsimple}
C_{\cV_1\cap\cV_2}^{\alpha}=C_{\cV_1}^{\alpha}\cap C_{\cV_2}^{\alpha}\text{ and } C_{\cV_1\cup\cV_2}^{\alpha}=C_{\cV_1}^{\alpha}
\cup C_{\cV_2}^{\alpha}\text{ for all }\alpha\in\Rmax\; .
\end{equation}
Since $\cV_1\cap\cV_2$ and $\cV_1\cup\cV_2$ are closed under multiplication by scalars, using~\eqref{e:sectsimple} and
Proposition~\ref{p:section} we conclude that
\[
\begin{split}
C_{\cV_1}^{\alpha}\cap C_{\cV_2}^{\alpha}&=C_{\cV_1\cap\cV_2}^{\alpha}=
\{\alpha x\mid x\in C_{\cV_1\cap\cV_2}^{\bunity}\}=\{\alpha x\mid x\in C_{\cV_1}^{\bunity}\cap C_{\cV_2}^{\bunity}\}=
\{\alpha x\mid x\in \cH_1\cap\cH_2\}=\emptyset \; ,\\
C_{\cV_1}^{\alpha}\cup C_{\cV_2}^{\alpha}&=C_{\cV_1\cup\cV_2}^{\alpha}
=\{\alpha x\mid x\in C_{\cV_1\cup\cV_2}^{\bunity}\} =\{\alpha x\mid x\in C_{\cV_1}^{\bunity}\cup C_{\cV_2}^{\bunity}\}=
\{\alpha x\mid x\in \cH_1\cup\cH_2\}= \Rmax^n \; .
\end{split}
\]
Thus we obtained~\eqref{e:sectnonzero}.

It remains to prove~\eqref{e:sectzero}. Equations~\eqref{e:cv1}
and~\eqref{e:cv2} imply that $C_{\cV_1}^{\bzero}=\cone(R_1)$ and
$C_{\cV_2}^{\bzero}=\cone(R_2)$, so it remains to show that
$(\cone(R_1),\cone(R_2))$ is a joined pair of conical hemispaces of
$\Rmax^n$.

Let us show first that $\cone(R_1)\cup\cone(R_2)=\Rmax^n$. Take a
vector $z\in \Rmax^n$. As $(\cH_1,\cH_2)$ is a complementary pair of
hemispaces, either $z\in \cH_1$ or $z\in \cH_2$. Assume $z\in\cH_1$.
By Theorem~\ref{t:multiorder2-conv} (taking $\cH_2$ as $\cC$ and
$\cH_1$ as its complement),
it follows that $\Sector_i(z) \subseteq \cH_1$ for $i=n+1$ or for
some $i\in \supp(z)$. If $\Sector_i(z)\subseteq \cH_1$ for some
$i\neq n+1$, then
$z\in\cone(R_1)$ by Lemma~\ref{l:sizspanr}. In the case when
$\Sector_i(z)\not\subseteq\cH_1$ for any $i\neq n+1$, we have
$\Sector_{n+1}(z)\subseteq \cH_1$, and we consider $\alpha z$ for
$\alpha \neq \bzero$.

Suppose that for some $\alpha\neq \bzero$ we have
$\Sector_{n+1}(\alpha z)\not\subseteq\cH_1$ and
$\Sector_{n+1}(\alpha z)\not\subseteq\cH_2$. Then $\Sector_i(\alpha
z)\subseteq\cH_1$ or $\Sector_i(\alpha z)\subseteq\cH_2$  for some
$i\neq n+1$, by Theorem~\ref{t:multiorder2-conv}. If
$\Sector_i(\alpha z)\subseteq\cH_1$ then $z\in\cone(R_1)$, and if
$\Sector_i(\alpha z)\subseteq\cH_2$ then $z\in\cone (R_2)$, by
Lemma~\ref{l:sizspanr}, so $z\in\cone(R_1)\cup\cone(R_2)$.

We are left with the case when $\Sector_{n+1}(\alpha
z)\subseteq\cH_1$ or $\Sector_{n+1}(\alpha z)\subseteq\cH_2$ for
each $\alpha$. Since by Lemma~\ref{l:sn+1incr} the sets
$\Sector_{n+1}(\alpha z)$ are increasing with $\alpha$, it can be
only that either $\Sector_{n+1}(\alpha z)\subseteq\cH_1$ for all
$\alpha$, or $\Sector_{n+1}(\alpha z)\subseteq\cH_2$ for all
$\alpha$. Assume the first case. Then, we obtain that all vectors
$x$ with $\supp (x)\subseteq\supp (z)$ are in $\cH_1$, since
$x\in\Sector_{n+1}(\alpha z)$ with $\alpha=\oplus_{i\in\supp(z)}
x_iz_i^{-1}$ holds for every such $x$. But then
$\Sector_i(z)\subseteq\cH_1$ for any $i\in\supp(z)$, implying that
$z\in\cone (R_1)$.

We have shown that if $z\in\cH_1$ then
$z\in\cone(R_1)\cup\cone(R_2)$. The same statement holds in the case
of $z\in\cH_2$ (by symmetry). Thus
$\cone(R_1)\cup\cone(R_2)=\Rmax^n$ is proved, and it remains to show
that $\cone(R_1)\cap\cone(R_2)=\{\bzero\}$.

Assume by contradiction that $z\in \cone(R_1)\cap\cone(R_2)$ and
$z\neq \bzero$. As $z\in\cone(R_1)$, we have $z=\oplus_{x\in R_1} \beta_x x$,
where only a finite number of the scalars $\beta_x$ are not equal to $\bzero$. Observe that $R_1\neq \emptyset$ and at least
one $\beta_x$ is not equal to $\bzero$ because $z\neq \bzero$.
By~\eqref{e:PRhemi}, $R_1$ is composed of vectors
of the form $\uvector{i}\oplus y_jy_i^{-1}\uvector{j}$, where
$y\in\Rmax^n$ and $i,j\in \supp(y)$ are such that $\Sector_i(y)\subseteq\cH_1$.
Consequently we have $\beta (\uvector{i}\oplus y_{j}y_{i}^{-1}\uvector{j})\leq z$ for some $\beta\in \Rinv$,
$y\in\Rmax^n$ and $i,j\in \supp(y)$ such that $\Sector_{i}(y)\subseteq\cH_1$.
Since $\Sector_{i}(y)\subseteq\cH_1$, by~\eqref{e:PRhemi} it follows that $y_{i}\uvector{i}\in P_1$.
As $z\in\cone(R_2)$, for the same
reasons as above there also exist $\beta'\in \Rinv$,
$y'\in\Rmax^n$ and $i',j'\in \supp(y')$ such that $\beta' (\uvector{i'}\oplus y'_{j'}(y'_{i'})^{-1}\uvector{j'})\leq z$ and $y'_{i'}\uvector{i'}\in P_2$.

If $\lambda \geq (y_{i}\beta ^{-1}\oplus y'_{i'}(\beta')^{-1})$ then
$\lambda \geq y_{i}\beta ^{-1}$, whence using also that $\beta
\uvector{i}\leq z$, we obtain $y_{i}\uvector{i}=y_{i}\beta \beta
^{-1}\uvector{i}\leq \lambda \beta \uvector{i}\leq \lambda z.$
Similarly, since $\lambda \geq y'_{i'} (\beta')^{-1},$ we obtain
$y'_{i'}\uvector{i'}\leq \lambda z$. These inequalities can be
written as equalities $y_{i}\uvector{i}\oplus \lambda
z=y'_{i'}\uvector{i'}\oplus \lambda z=\lambda z$, whence $\lambda
z\in \conv(P_{1})\oplus\cone (R_{1})=\cH_{1}$ and $\lambda
z\in\conv(P_{2})\oplus\cone(R_{2})=\cH_{2}$, so $\lambda z\in
\cH_{1}\cap \cH_{2}$, in contradiction with the assumption
$\cH_{1}\cap \cH_{2}=\emptyset$. Thus the proof
of~\eqref{e:sectnonzero} and~\eqref{e:sectzero} is complete and
Lemma~\ref{l:CharaHemiWithSections} implies that $(\cV_{1},\cV_{2})$
is a joined pair of conical hemispaces.
\end{proof}

\subsection{On the $(P,R)$-decomposition of conical hemispaces}\label{ss:hemicone}

We know that the $(P,R)$-decomposition of a conical hemispace, as a linear span of quasisectors
(Theorem~\ref{t:conhemsem}), consists of unit vectors and linear
combinations of two unit vectors (Theorem~\ref{t:onetwo}).
Therefore, to describe the $(P,R)$-decompositions of a joined pair of conical hemispaces
we need to understand how the linear combinations of
two unit vectors are distributed among them. With this aim, we first
associate with a non-trivial joined pair $(\cV_1,\cV_2)$ of conical
hemispaces in $\Rmax^n$ the index sets
\begin{equation}\label{def:IJ}
I:=\left\{ i\in [n]\mid \uvector{i}\in\cV_1\right\}
\; \makebox{ and } \;
J:=\left\{ j\in [n]\mid \uvector{j}\in\cV_2\right\} .
\end{equation}
The following lemma is elementary and will rather serve to define
below the coefficients $\alpha_{ij}$. In what follows, for some
purposes it will be convenient to assume that scalars can also take
the value $+\infty$ (the structure which is obtained defining
$\lambda \oplus (+\infty):= +\infty, (+\infty)\oplus\lambda:=
+\infty$ for $\lambda \in \Rmax$, $\lambda \otimes (+\infty):=
+\infty, (+\infty)\otimes \lambda:= +\infty$ for $\lambda \in \Rinv$
and $\bzero \otimes (+\infty):= \bzero, (+\infty)  \otimes\bzero :=
\bzero$ is usually known as the completed semifield, see for
instance~\cite{CGQS-05}) and to adopt the convention
\begin{equation}\label{newformula-1008}
\uvector{i}\oplus\lambda \uvector{j}=\uvector{j} \text{ if }\lambda =+\infty \; .
\end{equation}

\begin{lemma}\label{l:duets}
Let $(\cV_1,\cV_2)$ be a non-trivial joined pair of conical
hemispaces of $\Rmax^n$, and let $I,J\subset [n]$ be defined as
in~\eqref{def:IJ}. Then, for any $i\in I$ and $j\in J$ we have
\[
\sup\left\{\alpha \in \Rmax\cup \{+\infty\}\mid \uvector{i}\oplus\alpha \uvector{j}\in\cV_1\right\}=
\inf\left\{\beta \in \Rmax\cup \{+\infty\} \mid \uvector{i}\oplus\beta \uvector{j}\in\cV_2\right\}.
\]
\end{lemma}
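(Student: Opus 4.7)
The plan is to introduce the two sets
\[
A := \{\alpha \in \Rmax \cup \{+\infty\} \mid \uvector{i} \oplus \alpha \uvector{j} \in \cV_1\}, \quad
B := \{\beta \in \Rmax \cup \{+\infty\} \mid \uvector{i} \oplus \beta \uvector{j} \in \cV_2\},
\]
and to show that $A$ is ``downward closed'', $B$ is ``upward closed'', and that together they partition $\Rmax \cup \{+\infty\}$. These three facts will force $\sup A = \inf B$, which is exactly the claimed equality.

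First I would establish the partition property. For any $\alpha \in \Rmax$, the vector $\uvector{i} \oplus \alpha \uvector{j}$ has $i$-th coordinate equal to $\bunity$, hence is non-null; and for $\alpha = +\infty$ the convention~\eqref{newformula-1008} makes it equal to $\uvector{j} \neq \bzero$. Since $(\cV_1,\cV_2)$ is a joined pair of conical hemispaces, $\cV_1 \cup \cV_2 = \Rmax^n$ and $\cV_1 \cap \cV_2 = \{\bzero\}$, so each such nonzero vector belongs to exactly one of $\cV_1, \cV_2$. Thus $A, B$ partition $\Rmax \cup \{+\infty\}$. I would also note that $I \cap J = \emptyset$ (otherwise some $\uvector{k}$ would lie in $\cV_1 \cap \cV_2 = \{\bzero\}$), which ensures $\bzero \in A \setminus B$ (since $\uvector{i} \in \cV_1$ but $\uvector{i} \notin \cV_2$) and $+\infty \in B \setminus A$ (since $\uvector{j} \in \cV_2$ but $\uvector{j} \notin \cV_1$); in particular both sets are nonempty.

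Next I would verify the two monotonicity properties. For downward closure of $A$: given $\alpha \in A \cap \Rinv$ and $\bzero < \alpha' \leq \alpha$, the scalar $\alpha' \alpha^{-1} \leq \bunity$ gives the identity
\[
\uvector{i} \oplus \alpha' \uvector{j} = \uvector{i} \oplus \alpha' \alpha^{-1}\bigl(\uvector{i} \oplus \alpha \uvector{j}\bigr),
\]
which lies in $\cV_1$ because $\uvector{i}$ and $\uvector{i} \oplus \alpha \uvector{j}$ both lie in the cone $\cV_1$ (the case $\alpha' = \bzero$ is just $\uvector{i} \in \cV_1$). For upward closure of $B$: given $\beta \in B$ and $\beta \leq \beta'$, either $\beta' = +\infty$ and $\uvector{i} \oplus \beta' \uvector{j} = \uvector{j} \in \cV_2$ since $j \in J$, or $\beta' \in \Rinv$ and $\beta \uvector{j} \leq \beta' \uvector{j}$ yields
\[
\uvector{i} \oplus \beta' \uvector{j} = \bigl(\uvector{i} \oplus \beta \uvector{j}\bigr) \oplus \beta' \uvector{j},
\]
with both summands in the cone $\cV_2$ (the second because $\uvector{j} \in \cV_2$). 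The only non-routine bookkeeping here, and the step I would be most careful about, is treating the boundary values $\bzero$ and $+\infty$ and keeping track of which of $\bzero, +\infty$ belong to which set; everything else is a direct cone computation.

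Finally I would combine these properties. If there existed $\alpha \in A$ and $\beta \in B$ with $\beta < \alpha$, then $\beta$ would lie in $A$ by the downward closure of $A$, contradicting $A \cap B = \emptyset$; hence $\alpha \leq \beta$ for all such pairs, and $\sup A \leq \inf B$. Conversely, if $\sup A < \inf B$ strictly, any value strictly between them would belong to neither $A$ nor $B$, contradicting $A \cup B = \Rmax \cup \{+\infty\}$. Therefore $\sup A = \inf B$, which is the required equality.
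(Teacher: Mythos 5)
Your proof is correct and follows essentially the same route as the paper: both arguments rest on the partition property ($A\cup B=\Rmax\cup\{+\infty\}$, $A\cap B$ empty) together with monotonicity of membership. The only cosmetic difference is that you prove downward closure of $A$ directly via the identity $\uvector{i}\oplus\alpha'\uvector{j}=\uvector{i}\oplus\alpha'\alpha^{-1}(\uvector{i}\oplus\alpha\uvector{j})$, whereas the paper proves only upward closure of $B$ and deduces the rest from the partition by a short case analysis on the value of $\inf B$.
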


\begin{proof}
In the sequel, we will use the fact that every linear combination of two
unit vectors belongs either to $\cV_1$ or to $\cV_2$, which follows
from $\cV_1 \cap \cV_2 = \{\bzero \}$ and $\cV_1 \cup \cV_2 =
\Rmax^n$.

First, assume that $\inf\left\{\beta \in \Rmax\cup \{+\infty\}
\mid \uvector{i}\oplus\beta \uvector{j}\in\cV_2\right\}=+\infty$,
which implies $\uvector{i}\oplus\beta \uvector{j}\not \in\cV_2$ for
all $\beta \in \Rmax$. Then, we have $\uvector{i}\oplus\alpha
\uvector{j}\in\cV_1$ for all $\alpha \in \Rmax$, and so
$\sup\left\{\alpha \in \Rmax\cup \{+\infty\}\mid
\uvector{i}\oplus\alpha \uvector{j}\in\cV_1\right\}= +\infty$.

Assume now that $\inf\left\{\beta \in \Rmax\cup \{+\infty\}
\mid \uvector{i}\oplus\beta \uvector{j}\in\cV_2\right\}\neq +\infty$.
Observe that we have the following implication:
\begin{equation}\label{e:impl}
\uvector{i}\oplus\beta\uvector{j}\in\cV_2\;,\;\gamma\geq\beta\Rightarrow \uvector{i}\oplus\gamma\uvector{j}\in\cV_2\;,
\end{equation}
since $\uvector{j}\in\cV_2$ and, further,
$\uvector{i}\oplus\gamma\uvector{j}=(\uvector{i}\oplus\beta\uvector{j})\oplus\gamma\uvector{j}\in\cV_2$
if $\gamma\geq\beta$. Thus,
\begin{equation}\label{e:supleqinf}
\sup\left\{\alpha \in \Rmax\cup \{+\infty\}\mid \uvector{i}\oplus\alpha \uvector{j}\in\cV_1\right\}\leq
\inf\left\{\beta \in \Rmax\cup \{+\infty\} \mid \uvector{i}\oplus\beta \uvector{j}\in\cV_2\right\}\; ,
\end{equation}
because if we had $>$ in~\eqref{e:supleqinf}, then there would exist $\alpha, \beta \in \Rmax$ with $\alpha >\beta $ such that $\uvector{i}\oplus \alpha \uvector{j}\in \cV_{1}$ and
$\uvector{i}\oplus \beta \uvector{j}\in \cV_{2}$. Then by $\uvector{i}\oplus \beta
\uvector{j}\in \cV_{2}$ and~\eqref{e:impl} it would follow that $\uvector{i}\oplus \alpha \uvector{j}\in \cV_{2}$, whence $\uvector{i}\oplus \alpha \uvector{j}\notin \cV_{1}$, a
contradiction.
If $\inf\left\{\beta \in \Rmax\cup \{+\infty\} \mid
\uvector{i}\oplus\beta
 \uvector{j}\in\cV_2\right\}=\bzero$,
 then the lemma follows from~\eqref{e:supleqinf}.
Thus, it remains to consider the case
$\inf\left\{\beta \in \Rmax\cup \{+\infty\} \mid \uvector{i}\oplus\beta \uvector{j}\in\cV_2\right\}\in \Rinv$.
In this case,
by the definition of $\inf$ we have $\uvector{i}\oplus\alpha \uvector{j}\not \in\cV_2$ for all
$\alpha<\inf\left\{\beta \in \Rmax\cup \{+\infty\} \mid \uvector{i}\oplus\beta \uvector{j}\in\cV_2\right\}$.
Then, since every linear combination of two
unit vectors belongs either to $\cV_1$ or to $\cV_2$, we have
$\uvector{i}\oplus\alpha \uvector{j}\in \cV_1$ for all
$\alpha<\inf\left\{\beta \in \Rmax\cup \{+\infty\} \mid \uvector{i}\oplus\beta \uvector{j}\in\cV_2\right\}$,
and so~\eqref{e:supleqinf} must be satisfied with equality. This completes the proof.
\end{proof}

Henceforth, the matrix whose entries are the coefficients
\begin{equation}\label{alphaij-conic}
\alpha_{ij}:=\sup\left\{\alpha \in \Rmax\cup \{+\infty\}\mid \uvector{i}\oplus\alpha \uvector{j}\in\cV_1\right\}=\inf\left\{\beta \in \Rmax\cup \{+\infty\} \mid \uvector{i}\oplus\beta \uvector{j}\in\cV_2\right\}
\end{equation}
will be referred to as the {\em $\alpha$-matrix} (associated with
the non-trivial joined pair $(\cV_1,\cV_2)$ of conical hemispaces).
Besides, with each coefficient $\alpha_{ij}$ we associate the pair
of subsets of $\Rmax \cup \{+\infty\}$ defined by
\begin{equation}\label{alphapmdef}
(\alpha_{ij}^{(-)},\alpha_{ij}^{(+)}):=
\begin{cases}
(\{\lambda \mid \lambda <\alpha_{ij}\},\{\lambda \mid \lambda \geq\alpha_{ij}\}) & \text{ if } \alpha_{ij}\in\Rinv, \uvector{i}\oplus \alpha_{ij} \uvector{j} \in \cV_2,\\
(\{\lambda \mid \lambda \leq\alpha_{ij}\},\{\lambda \mid \lambda >\alpha_{ij}\}) & \text{ if } \alpha_{ij}\in\Rinv, \uvector{i}\oplus \alpha_{ij} \uvector{j} \in \cV_1,\\
(\{\alpha_{ij}\},\{\lambda \mid \lambda >\alpha_{ij}\}) & \text{ if } \alpha_{ij}=\bzero,\\
(\{\lambda \mid \lambda <\alpha_{ij}\},\{\alpha_{ij} \}) & \text{ if } \alpha_{ij}=+\infty .
\end{cases}
\end{equation}
Thus, by Lemma~\ref{l:duets} it follows that
\begin{equation}\label{alphapmdef2}
\left\{\uvector{i}\oplus \lambda \uvector{j}\mid \lambda \in \alpha_{ij}^{(-)}\right\} \subset \cV_1 \makebox{ and }
\left\{\uvector{i}\oplus \lambda \uvector{j}\mid \lambda \in \alpha_{ij}^{(+)}\right\} \subset \cV_2
\end{equation}
for any $i\in I$ and $j\in J$.

Since $\alpha_{ij}^{(+)}\subseteq\Rinv \cup\{+\infty\}$ and
$\alpha_{ij}^{(-)}\subseteq \Rinv \cup \{ \bzero \}$,
observe that the sets $\alpha_{i_1j_1}^{(+)}$ and $\alpha_{i_2j_2}^{(+)}$,
as well as  $\alpha_{i_1j_1}^{(-)}$ and $\alpha_{i_2j_2}^{(-)}$,
can be unambiguously multiplied (by definition,
the product of two sets consists of
all possible products of an element of one set by an element of
the other set)
for any $i_1,i_2\in I$ and $j_1,j_2\in J$.

In the sequel, we write $I^1+\cdots+I^m=I$ if $I^k$ for $k\in [m]$ and
$I$ are index sets such that $I^1 \cup\cdots\cup I^m=I$ and
$I^1,\ldots, I^m$ are pairwise disjoint.

We now formulate one of the main results of the paper: a
characterization of conical hemispaces in terms of
their generators. We will immediately prove that any conical hemispaces fulfils the given conditions. The proof
that these conditions are also sufficient is going to occupy the
remaining part of this section.

\begin{theorem}\label{t:Ghemi}
A non-trivial cone $\cV\subset \Rmax^n$ is a conical hemispace if and only if
\begin{equation}\label{DefHemiWithGen}
\cV=\cone \left( \left\{\uvector{i}\oplus \lambda \uvector{j}\mid i\in I, j\in J, \lambda \in \sigma_{ij}^{(-)}\right\} \right),
\end{equation}
where $I$ is a non-empty proper subset of $[n]$, $J=[n]\setminus I$,
and the sets $\sigma_{ij}^{(-)}$, which are non-empty proper subsets of $\Rmax\cup \{+\infty\}$
either of the form
$\{\lambda \in \Rmax \mid \lambda \leq \sigma_{ij}\}$ or
$\{\lambda \in \Rmax \mid \lambda < \sigma_{ij}\}$ with
$\sigma_{ij}\in \Rmax\cup \{+\infty\}$,
are such that the pairs $(\sigma^{(-)}_{ij},\sigma^{(+)}_{ij})$,
with $\sigma^{(+)}_{ij}$ defined by $\sigma^{(+)}_{ij}:=(\Rmax\cup \{+\infty\})\setminus \sigma^{(-)}_{ij}$, satisfy
\begin{equation}\label{e:rankone}
\sigma_{i_1j_2}^{(+)}\sigma_{i_2j_1}^{(+)}\cap \sigma_{i_1j_1}^{(-)}\sigma_{i_2j_2}^{(-)}=\emptyset \; \makebox{ and } \;
\sigma_{i_1j_2}^{(-)}\sigma_{i_2j_1}^{(-)}\cap \sigma_{i_1j_1}^{(+)}\sigma_{i_2j_2}^{(+)}=\emptyset
\end{equation}
for any $i_1,i_2\in I$ and $j_1,j_2\in J$.
\end{theorem}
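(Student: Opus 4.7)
The plan is to treat the two implications separately. For \emph{necessity}, assume $\cV = \cV_1$ is a non-trivial conical hemispace with joined partner $\cV_2 = (\complement \cV_1) \cup \{\bzero\}$. First I would verify that the sets $I, J$ from~\eqref{def:IJ} partition $[n]$ (every unit vector is nonzero, hence lies in exactly one of the two cones) and that both are nonempty (if, say, $I = \emptyset$, then $\cV_2 = \Rmax^n$, contradicting non-triviality). Defining $\sigma_{ij}^{(-)} := \{\lambda \mid \uvector{i} \oplus \lambda \uvector{j} \in \cV_1\}$ for $i \in I$, $j \in J$ and $\sigma_{ij}^{(+)}$ as its complement in $\Rmax \cup \{+\infty\}$, the argument of Lemma~\ref{l:duets} shows $\sigma_{ij}^{(-)}$ is downward-closed, contains $\bzero$ (since $\uvector{i} \in \cV_1$) and not $+\infty$ (since $\uvector{j} \in \cV_2$ and the convention~\eqref{newformula-1008}), hence has the required form. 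The representation~\eqref{DefHemiWithGen} then follows by combining Theorem~\ref{t:conhemsem} with~\eqref{e:PRquasihemi} of Theorem~\ref{t:onetwo}: every generator $\uvector{i} \oplus y_k y_i^{-1} \uvector{k}$ of $\cV_1$ has $i \in I$; if $k \in I$ too then it lies in $\cone\{\uvector{i}, \uvector{k}\}$, which sits in the right-hand side of~\eqref{DefHemiWithGen} since $\uvector{i} = \uvector{i} \oplus \bzero \uvector{j}$ with $\bzero \in \sigma_{ij}^{(-)}$; if $k \in J$ then the coefficient lies in $\sigma_{ik}^{(-)}$ by definition.

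The rank-one condition~\eqref{e:rankone} is then proved by contradiction. Suppose $\alpha_1 \alpha_2 = \beta_1 \beta_2$ with $\alpha_1 \in \sigma_{i_1 j_2}^{(+)}$, $\alpha_2 \in \sigma_{i_2 j_1}^{(+)}$, $\beta_1 \in \sigma_{i_1 j_1}^{(-)}$, $\beta_2 \in \sigma_{i_2 j_2}^{(-)}$. Short casework rules out the boundary values (if any $\beta_k = \bzero$ or any $\alpha_k = +\infty$, the equation forces some $\alpha$ into $\sigma^{(-)}$ or some $\beta$ into $\sigma^{(+)}$), so all four quantities lie in $\Rinv$. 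Setting $\mu := \beta_1 \alpha_2^{-1}$, and noting that $\mu \alpha_2 = \beta_1$ and $\mu \beta_2 = \alpha_1$ (the latter from $\beta_1 \beta_2 = \alpha_1 \alpha_2$), the vector
\[
v := \uvector{i_1} \oplus \mu \uvector{i_2} \oplus \beta_1 \uvector{j_1} \oplus \mu \beta_2 \uvector{j_2}
\]
equals both $(\uvector{i_1} \oplus \beta_1 \uvector{j_1}) \oplus \mu(\uvector{i_2} \oplus \beta_2 \uvector{j_2}) \in \cV_1$ and $(\uvector{i_1} \oplus \alpha_1 \uvector{j_2}) \oplus \mu(\uvector{i_2} \oplus \alpha_2 \uvector{j_1}) \in \cV_2$. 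Since $v \neq \bzero$, this contradicts $\cV_1 \cap \cV_2 = \{\bzero\}$. The second relation in~\eqref{e:rankone} follows by swapping the roles of the two hemispaces.

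For \emph{sufficiency}, the plan is to construct an explicit joined partner for the cone $\cV$ given by~\eqref{DefHemiWithGen}. The natural candidate is the ``flipped'' cone
\[
\cV' := \cone\bigl(\{\uvector{j} \oplus \mu \uvector{i} \mid j \in J, i \in I, \mu \in \tau_{ji}^{(-)}\}\bigr), \qquad \tau_{ji}^{(-)} := \{\mu \mid \mu^{-1} \in \sigma_{ij}^{(+)}\},
\]
motivated by the identity $\uvector{j} \oplus \mu \uvector{i} = \mu(\uvector{i} \oplus \mu^{-1} \uvector{j})$. One first checks that each $\tau_{ji}^{(-)}$ is downward-closed of the required form (using that $+\infty \in \sigma_{ij}^{(+)}$ always, so $\bzero \in \tau_{ji}^{(-)}$) and that the rank-one condition~\eqref{e:rankone} is preserved under this inversion, so $\cV'$ is structurally of the same type as $\cV$. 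It then remains to verify $\cV \cap \cV' = \{\bzero\}$ and $\cV \cup \cV' = \Rmax^n$.

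This final verification is the main obstacle, and I expect it to require the detailed combinatorial analysis of the $\alpha$-matrix announced by the authors as the content of Proposition~\ref{p:RD} and the subsequent results of the section. The rank-one condition should force a multiplicative coherence on the matrix $(\sigma_{ij})$ akin to a tropical rank-one factorization $\sigma_{ij} = a_i b_j$, with the strict/non-strict nature of each $\sigma_{ij}^{(-)}$ encoding additional boundary data. The cover $\cV \cup \cV' = \Rmax^n$ should then reduce to a coordinatewise comparison of the quantities $a_i^{-1} x_i$ against $b_j x_j$ for an arbitrary $x \in \Rmax^n$, while disjointness reduces precisely to the impossibility of equality ruled out by~\eqref{e:rankone}. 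The most delicate point will be handling the boundary cases $\sigma_{ij} \in \{\bzero, +\infty\}$ and the compatibility between the strict and non-strict entries of the matrix.
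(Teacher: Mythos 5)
Your ``only if'' direction is correct and is essentially the paper's own argument: the same index sets $I,J$ from~\eqref{def:IJ}, the same identification of $\sigma_{ij}^{(-)}$ with the sets $\alpha_{ij}^{(\pm)}$ via Lemma~\ref{l:duets}, the same appeal to Theorem~\ref{t:onetwo} to see that only unit vectors and two-term combinations are needed, and the same contradiction vector $z=\uvector{i_1}\oplus\beta_1\uvector{j_1}\oplus\mu(\uvector{i_2}\oplus\beta_2\uvector{j_2})$ for the rank-one condition. No issues there.

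The ``if'' direction, however, is a plan rather than a proof, and the plan underestimates where the difficulty lies. Your candidate complement $\cV'$ is the right one (it coincides, after the rescaling $\uvector{i}\oplus\lambda\uvector{j}=\lambda(\uvector{j}\oplus\lambda^{-1}\uvector{i})$, with the cone $\cV_2$ of~\eqref{hemi-grep} and Proposition~\ref{p:complement}), but you do not establish either $\cV\cup\cV'=\Rmax^n$ or $\cV\cap\cV'=\{\bzero\}$, and the heuristic you offer --- a global factorization $\sigma_{ij}=a_ib_j$ followed by a coordinatewise comparison of $a_i^{-1}x_i$ against $b_jx_j$ --- is too coarse in two respects. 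First, the rank-one condition does \emph{not} yield a single global factorization: entries equal to $\bzero$ or $+\infty$ force a block structure, and the paper has to partition $I$ into ordered classes $I^1\preceq\cdots\preceq I^p$ (via Proposition~\ref{p:RD} parts~(ii)--(iv)) and reduce membership of an arbitrary $x$ to membership of a truncated vector $\hat x$ in a single block $\bigoplus_{i\in I^h}\cC_i$ (Theorem~\ref{t:hatx}); the factorization $\sigma_{ij}=\gamma_j^{-1}\beta_i$ is only available blockwise (Lemma~\ref{l:Ci}). Second, and more seriously, disjointness is \emph{not} ``precisely the impossibility of equality ruled out by~\eqref{e:rankone}'': when $\oplus_{j}\gamma_jx_j=\oplus_i\beta_ix_i$ the decision of which side of the borderline $x$ falls on depends on the pattern of strict versus non-strict entries across different rows, and a common point of $\bigoplus_{i\in I^r}\cC_i$ and its reflection $\Tilde\cC$ is excluded only by exploiting the nestedness of the sets $J_i^<$ (Proposition~\ref{p:RD} part~(v)) through an infinite-descent argument producing sequences $i_1>i_2>\cdots$ and $j_1<j_2<\cdots$ (Proposition~\ref{p:Ci-hemi}). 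These are the genuinely new ideas of the sufficiency proof, and your proposal names the destination without supplying them.
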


\noindent {\em Proof of the ``only if'' part of Theorem~\ref{t:Ghemi}.}
Define $\cV_1:=\cV$ and $\cV_2:=\complement \cV \cup \{ \bzero \}$.
Thus, $(\cV_1,\cV_2)$ is a non-trivial joined pair of conical hemispaces in $\Rmax^n$ because
$\cV$ is a conical hemispace and non-trivial. Let $I$ and $J$ be the
sets defined in~\eqref{def:IJ}. Then, $I$ and $J$ satisfy $J=[n]\setminus I$, and these sets
are non-empty since $(\cV_1,\cV_2)$ is non-trivial.
For $i\in I$ and $j\in J$, let $\sigma_{ij}:=\alpha_{ij}$
and $(\sigma_{ij}^{(-)},\sigma_{ij}^{(+)}):=(\alpha_{ij}^{(-)},\alpha_{ij}^{(+)})$,
where the scalars $\alpha_{ij}$ and the pairs of sets $(\alpha_{ij}^{(-)},\alpha_{ij}^{(+)})$
are defined by~\eqref{alphaij-conic} and~\eqref{alphapmdef} respectively. Then,
the sets $\sigma_{ij}^{(-)}$ and $\sigma_{ij}^{(+)}$ are of the required form.

We claim that
\begin{equation}\label{hemi-grep}
\begin{split}
\cV_1=\cone \left( \left\{\uvector{i}\oplus \lambda \uvector{j}\mid i\in I, j\in J, \lambda \in \sigma_{ij}^{(-)}\right\} \right), \\
\cV_2=\cone \left( \left\{\uvector{i}\oplus \lambda \uvector{j}\mid i\in I, j\in J, \lambda \in \sigma_{ij}^{(+)}\right\} \right).
\end{split}
\end{equation}
Indeed, by Theorem~\ref{t:onetwo} both $\cV_1$ and $\cV_2$
are generated by unit vectors and linear combinations of two unit vectors.
The distribution of unit vectors is given by $I$ and $J$.
Observe that~\eqref{hemi-grep} conforms to this distribution,
since for any $i\in I$, $\uvector{i}$ belongs to the generators
of $\cV_1$ as $\bzero \in \sigma_{ij}^{(-)}$,
and for any $j\in J$, $\uvector{j}$ belongs to the generators of $\cV_2$
since $+\infty \in \sigma_{ij}^{(+)}$. This obviously implies that no linear combination of $\uvector{i_1}$
and $\uvector{i_2}$ with $i_1,i_2\in I$
(resp. of $\uvector{j_1}$ and $\uvector{j_2}$ with $j_1,j_2\in J$)
is necessary in~\eqref{hemi-grep} to generate $\cV_1$ (resp. $\cV_2$).
For $i\in I$ and $j\in J$, the distribution of the linear combinations of
$\uvector{i}$ and $\uvector{j}$ is given by~\eqref{alphapmdef2}. Since $(\sigma_{ij}^{(-)},\sigma_{ij}^{(+)})=(\alpha_{ij}^{(-)}\alpha_{ij}^{(+)})$,
it follows that~\eqref{hemi-grep} also conforms to this distribution.
These observations yield~\eqref{hemi-grep}.

It remains to prove~\eqref{e:rankone}. Assume that
\[
\sigma_{i_1j_2}^{(+)}\sigma_{i_2j_1}^{(+)}\cap \sigma_{i_1j_1}^{(-)}\sigma_{i_2j_2}^{(-)}\neq\emptyset \; .
\]
Then, there exist $\beta_{i_1j_2}\in \sigma_{i_1j_2}^{(+)}$,
$\beta_{i_2j_1}\in \sigma_{i_2j_1}^{(+)}$, $\gamma_{i_1j_1}\in \sigma_{i_1j_1}^{(-)}$
and $\gamma_{i_2j_2}\in \sigma_{i_2j_2}^{(-)}$
such that $\beta_{i_1j_2}\beta_{i_2j_1}=\gamma_{i_1j_1}\gamma_{i_2j_2}$.
For this to hold, the products $\beta_{i_1j_2}\beta_{i_2j_1}$ and
$\gamma_{i_1j_1}\gamma_{i_2j_2}$ should be in $\Rinv$,
and hence $\beta_{i_1j_2}$, $\beta_{i_2j_1}$,
$\gamma_{i_1j_1}$ and $\gamma_{i_2j_2}$ should be in $\Rinv$.
Then, we make the linear combination
\[
z=\uvector{i_1}\oplus \beta_{i_1j_2}\uvector{j_2}\oplus\lambda(\uvector{i_2}\oplus \beta_{i_2j_1} \uvector{j_1})\in\cV_2 \; ,
\]
where $\lambda$ satisfies $\lambda \beta_{i_2j_1}=\gamma_{i_1j_1}$,
hence also $\lambda \gamma_{i_2j_2}=\beta_{i_1j_2}$, and observe that
\[
z=\uvector{i_1}\oplus \gamma_{i_1j_1}\uvector{j_1}\oplus\lambda(\uvector{i_2}\oplus \gamma_{i_2j_2} \uvector{j_2})\in\cV_1 \; .
\]
Thus $\cV_1\cap\cV_2\neq\{\bzero\}$, a contradiction.
{\em This completes the proof of the ``only if'' part of Theorem~\ref{t:Ghemi}. The ``if'' part will be proved later (formally after Remark~\ref{JustRemark}, but the preparations for this proof
will start right after Corollary~\ref{c:rankone}).}

The following result shows
that if a non-trivial cone $\cV$ defined as in~\eqref{DefHemiWithGen} is a conical hemispace,
then $\complement\cV\cup\{\bzero\}$ can be defined as $\cV_2$ in~\eqref{hemi-grep} and the scalars $\sigma_{ij}$ are precisely the entries of the $\alpha$-matrix
associated with the non-trivial joined pair of conical hemispaces $(\cV,\complement\cV\cup\{\bzero\})$.

\begin{proposition}\label{p:complement}
Assume that
\begin{equation}\label{DefHemi}
\cV_1=\cone \left( \left\{\uvector{i}\oplus \lambda \uvector{j}\mid i\in I, j\in J, \lambda \in \sigma_{ij}^{(-)}\right\} \right)
\end{equation}
is a conical hemispace, where $I$ is a non-empty proper subset of $[n]$, $J=[n]\setminus I$,
and for $i\in I$ and $j\in J$ the sets $\sigma_{ij}^{(-)}$ are non-empty proper subsets of
$\Rmax\cup \{+\infty\}$ either of the form
$\{\lambda \in \Rmax \mid \lambda \leq \sigma_{ij}\}$ or
$\{\lambda \in \Rmax \mid \lambda < \sigma_{ij}\}$ with
$\sigma_{ij}\in \Rmax\cup \{+\infty\}$. Then, $\cV_1$ and $\cV_2$ defined by
\begin{equation}\label{DefHemiOther}
\cV_2:=\cone
\left( \left\{\uvector{i}\oplus \lambda \uvector{j}\mid i\in I, j\in J, \lambda \in \sigma_{ij}^{(+)}\right\} \right),
\end{equation}
where $\sigma^{(+)}_{ij}:=(\Rmax\cup \{+\infty\})\setminus
\sigma^{(-)}_{ij}$, form a joined pair of conical hemispaces, and we have $\sigma_{ij}=\alpha_{ij}$ for all $i\in I$ and $j\in J$ with
$\alpha_{ij}$ defined by~\eqref{alphaij-conic}.
\end{proposition}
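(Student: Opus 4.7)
The plan is to compare the given data with the canonical complementary cone $\cV_1':=\complement\cV_1\cup\{\bzero\}$. Since $\cV_1$ is a conical hemispace by hypothesis, $(\cV_1,\cV_1')$ is automatically a joined pair of conical hemispaces. If I can show $\cV_2=\cV_1'$, then $(\cV_1,\cV_2)$ is such a joined pair as well, and moreover the scalars $\sigma_{ij}$ must coincide with the entries $\alpha_{ij}$ of the $\alpha$-matrix associated with $(\cV_1,\cV_1')$ via~\eqref{alphaij-conic}, because the ``only if'' direction of Theorem~\ref{t:Ghemi}, already proved, guarantees the canonical representation~\eqref{hemi-grep} for $\cV_1'$. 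Note that $\bzero\in\sigma_{ij}^{(-)}$ and $+\infty\in\sigma_{ij}^{(+)}$ for any admissible choice of these sets (in each case the only alternative is ruled out by non-emptiness/properness), so each $\uvector{i}$ with $i\in I$ lies among the generators of $\cV_1$ and each $\uvector{j}$ with $j\in J$ lies among the generators of $\cV_2$.

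The first step is to identify the set $I$ of the proposition with $I':=\{k\in[n]:\uvector{k}\in\cV_1\}$ from~\eqref{def:IJ}. The inclusion $I\subseteq I'$ follows from the observation above. For the reverse, suppose $j\in J$ and assume toward a contradiction $\uvector{j}=\bigoplus_k\mu_k(\uvector{i_k}\oplus\lambda_k\uvector{j_k})$ with $i_k\in I$, $j_k\in J$, $\lambda_k\in\sigma_{i_kj_k}^{(-)}$; reading the $i_k$-th coordinate, which is $\bzero$ on the left since $i_k\in I$ differs from $j\in J$, forces every $\mu_k$ to vanish, contradicting $\uvector{j}\neq\bzero$. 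Hence $I'=I$ and $J'=J$.

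The key step is the analysis of two-coordinate combinations. Fix $i\in I$, $j\in J$. The inclusion $\sigma_{ij}^{(-)}\subseteq\{\lambda:\uvector{i}\oplus\lambda\uvector{j}\in\cV_1\}$ is immediate from~\eqref{DefHemi}. For the converse, write $\uvector{i}\oplus\lambda\uvector{j}=\bigoplus_k\mu_k(\uvector{i_k}\oplus\lambda_k\uvector{j_k})$ as a combination of generators of $\cV_1$. The coordinate argument from the previous step, applied to indices in $(I\setminus\{i\})\cup(J\setminus\{j\})$, forces $\mu_k=\bzero$ whenever $i_k\neq i$ and $\mu_k\lambda_k=\bzero$ whenever $j_k\neq j$. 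The $i$-th coordinate then yields $\bigoplus_{k:i_k=i}\mu_k=\bunity$, so every surviving $\mu_k$ satisfies $\mu_k\leq\bunity$, and the $j$-th coordinate yields $\lambda=\bigoplus_{k:i_k=i,\,j_k=j}\mu_k\lambda_k$. Since each such $\lambda_k$ belongs to $\sigma_{ij}^{(-)}$ and $\mu_k\leq\bunity$, a case split on the form of $\sigma_{ij}^{(-)}$ shows $\lambda\in\sigma_{ij}^{(-)}$: in the closed case we directly get $\lambda\leq\sigma_{ij}$, and in the open case the finite tropical sum of terms strictly below $\sigma_{ij}$ remains strictly below $\sigma_{ij}$. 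Consequently $\{\lambda:\uvector{i}\oplus\lambda\uvector{j}\in\cV_1\}=\sigma_{ij}^{(-)}$, so by~\eqref{alphaij-conic} and~\eqref{alphapmdef} we obtain $\alpha_{ij}=\sigma_{ij}$ and $(\alpha_{ij}^{(-)},\alpha_{ij}^{(+)})=(\sigma_{ij}^{(-)},\sigma_{ij}^{(+)})$.

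Having matched the index sets and the scalar data, applying the ``only if'' direction of Theorem~\ref{t:Ghemi} to $(\cV_1,\cV_1')$ gives the representation~\eqref{hemi-grep} for $\cV_1'$ with precisely the generators appearing in the definition~\eqref{DefHemiOther} of $\cV_2$; hence $\cV_2=\cV_1'$ and the proposition follows. I expect the main obstacle to lie in the coordinate-based inequality of the third step, and in particular in preserving the open/closed distinction of $\sigma_{ij}^{(-)}$ under the finite tropical sum defining $\lambda$; the uniform bound $\mu_k\leq\bunity$ extracted from the $i$-th coordinate is exactly what makes that preservation work.
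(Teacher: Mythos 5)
Your proposal is correct and follows essentially the same route as the paper: both proofs first show, by the same coordinate/support argument with the bound $\mu_k\leq\bunity$ extracted from the $i$-th coordinate and downward closure of $\sigma_{ij}^{(-)}$, that the unit vectors and two-unit-vector combinations lying in $\cV_1$ are exactly its listed generators, and then invoke the already-established structure theory (the paper cites Theorem~\ref{t:onetwo} directly, you route through~\eqref{hemi-grep} in the ``only if'' part of Theorem~\ref{t:Ghemi}, whose proof is itself an application of Theorem~\ref{t:onetwo}) to identify $\complement\cV_1\cup\{\bzero\}$ with $\cV_2$. The identification $\sigma_{ij}=\alpha_{ij}$ then follows in both cases from~\eqref{alphaij-conic}.
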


\begin{proof}
Let $R:=\left\{\uvector{i}\oplus \lambda \uvector{j}\mid i\in I, j\in J, \lambda \in \sigma_{ij}^{(-)}\right\}$. We first claim that the unit vectors
and linear combinations of two unit vectors contained in $\cV_1$ are precisely the ones in $R$.
Indeed given $j\in J$, since $\uvector{j}\not \in R$, it readily follows that $\uvector{j}\not \in \cV_1$. Then, the unit vectors contained in $\cV_1$ are precisely the ones in $R$ (i.e. $\uvector{i}$ for $i\in I$).
Assume now that $\uvector{i}\oplus \beta \uvector{j}\in \cV_1$ for some $i\in I$, $j\in J$ and $\beta \in \Rinv$. Then, we have
$\uvector{i}\oplus \beta \uvector{j}=\oplus_{y\in R} \delta_y y$, where only a finite number of the scalars $\delta_y$ is not equal to $\bzero$. Observe that
\begin{equation}\label{PropSimple}
\begin{split}
\delta_y\neq \bzero & \implies y_k=\bzero \text{ for }k\in [n]\setminus \{i,j\} \implies y=\uvector{i}\oplus \lambda \uvector{j} \text{ for some } \lambda \in \sigma_{ij}^{(-)} \\
& \implies y_i=\bzero , y_j\in \sigma_{ij}^{(-)}, \text{ and } y_k=\bzero \text{ for }k\in [n]\setminus \{i,j\}
\; .
\end{split}
\end{equation}
Then $\bunity=(\uvector{i}\oplus \beta \uvector{j})_i=(\oplus_{y\in R} \delta_y y)_i=\oplus_{y\in R} \delta_y y_i=\oplus_{y\in R} \delta_y$, and so $\delta_y\leq \bunity$ for all $y\in R$.
Besides, since only a finite number of the scalars $\delta_y$ is not equal to $\bzero$ and  $\beta=(\uvector{i}\oplus \beta \uvector{j})_j=(\oplus_{y\in R} \delta_y y)_j=\oplus_{y\in R} \delta_y y_j$,
we conclude that $\beta=\delta_y y_j$ for some $y\in R$ such that $\delta_y \neq \bzero$. Using~\eqref{PropSimple} and the fact that $\lambda \in \sigma_{ij}^{(-)}$ and $\delta \leq \bunity$ imply
$\delta \lambda \in \sigma_{ij}^{(-)}$, it follows that $\beta \in \sigma_{ij}^{(-)}$, and so $\uvector{i}\oplus \beta \uvector{j}\in R$. This completes the proof of our claim.

By Theorem~\ref{t:onetwo}, the conical hemispace $\complement \cV_1\cup \{\bzero \}$
is generated by the unit vectors and linear combinations of two unit vectors which it contains, i.e. those which do not belong to $\cV_1$. By the first part of the proof and the definition of $\sigma^{(+)}_{ij}$ as complements of $\sigma^{(-)}_{ij}$ in $\Rmax\cup \{+\infty\}$, we know that these vectors are precisely the generators of $\cV_2$ in~\eqref{DefHemiOther}.
Then $\cV_2=\complement \cV_1\cup \{\bzero \}$, and so $\cV_1$ and $\cV_2$ form a joined pair of conical hemispaces.

Finally, the fact that the entries $\alpha_{ij}$ of the $\alpha$-matrix associated with $(\cV_1,\cV_2)$  coincide with the scalars $\sigma_{ij}$ follows from their definition~\eqref{alphaij-conic} and
from~\eqref{DefHemi} and~\eqref{DefHemiOther}.
\end{proof}

Condition~\eqref{e:rankone} will be called the {\em rank-one condition},
due to the following observation.

\begin{corollary}\label{c:rankone}
If condition~\eqref{e:rankone} is satisfied and
$\sigma_{ij}\in\Rinv$ for $i\in \{i_1,i_2\}$ and $j\in\{j_1,j_2\}$,
then
$\sigma_{i_1j_1}\sigma_{i_2j_2}=\sigma_{i_1j_2}\sigma_{i_2j_1}$.
In particular, if all the entries of an $\alpha$-matrix belong to $\Rinv$, then it has rank one.
\end{corollary}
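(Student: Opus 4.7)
The plan is a short argument by contradiction, driven by the following key observation: when $\sigma_{ij}\in\Rinv$, inspection of~\eqref{alphapmdef} shows that every scalar strictly greater than $\sigma_{ij}$ lies in $\sigma_{ij}^{(+)}$ and every scalar strictly less than $\sigma_{ij}$ lies in $\sigma_{ij}^{(-)}$, regardless of which of the two ``$\Rinv$'' subcases in~\eqref{alphapmdef} applies (i.e., regardless of where the boundary value $\sigma_{ij}$ itself is placed). This is the only place where the hypothesis $\sigma_{ij}\in\Rinv$ enters; it rules out the two degenerate cases $\sigma_{ij}=\bzero$ and $\sigma_{ij}=+\infty$, in which $\sigma_{ij}^{(\pm)}$ have a different shape and the factorizations below would fail.

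Fix $i_1,i_2\in I$ and $j_1,j_2\in J$ with all four scalars $\sigma_{i_kj_\ell}$ in $\Rinv$, and suppose for contradiction that $\sigma_{i_1j_1}\sigma_{i_2j_2}\neq\sigma_{i_1j_2}\sigma_{i_2j_1}$. By the symmetry between the two halves of~\eqref{e:rankone} (interchanging the superscripts $(-)$ and $(+)$), I may assume $\sigma_{i_1j_1}\sigma_{i_2j_2}>\sigma_{i_1j_2}\sigma_{i_2j_1}$. Pick any $\mu\in\Rinv$ strictly between these two products. Since $\mu>\sigma_{i_1j_2}\sigma_{i_2j_1}$, I can select $a\in\Rinv$ with $\sigma_{i_1j_2}<a<\mu\sigma_{i_2j_1}^{-1}$ and set $b:=\mu a^{-1}$; then $a>\sigma_{i_1j_2}$, $b>\sigma_{i_2j_1}$, and $ab=\mu$, so $\mu\in\sigma_{i_1j_2}^{(+)}\sigma_{i_2j_1}^{(+)}$. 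Symmetrically, using $\mu<\sigma_{i_1j_1}\sigma_{i_2j_2}$ I factor $\mu=cd$ with $c<\sigma_{i_1j_1}$ and $d<\sigma_{i_2j_2}$, obtaining $\mu\in\sigma_{i_1j_1}^{(-)}\sigma_{i_2j_2}^{(-)}$. This contradicts the first disjointness in~\eqref{e:rankone}, and the opposite strict inequality $\sigma_{i_1j_1}\sigma_{i_2j_2}<\sigma_{i_1j_2}\sigma_{i_2j_1}$ would contradict the second by the same reasoning.

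For the ``in particular'' clause, once $\sigma_{i_1j_1}\sigma_{i_2j_2}=\sigma_{i_1j_2}\sigma_{i_2j_1}$ is known for every pair of rows and columns, the matrix $(\sigma_{ij})$ has all $2\times 2$ tropical minors vanishing, which is the standard characterization of tropical rank one (equivalently, $\sigma_{ij}=u_iv_j$ for suitable scalars $u_i,v_j$). I would simply cite this rather than reprove it, so there is no substantive obstacle beyond the elementary case-check in~\eqref{alphapmdef} mentioned in the first paragraph.
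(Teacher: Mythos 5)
Your proof is correct. The paper gives no proof of this corollary at all (it is presented as an immediate observation justifying the name ``rank-one condition''), and your argument --- choosing $\mu$ strictly between the two products and factoring it into $\sigma^{(+)}\sigma^{(+)}$ and $\sigma^{(-)}\sigma^{(-)}$ pieces, which works precisely because $\sigma_{ij}\in\Rinv$ forces every scalar strictly above (resp.\ below) $\sigma_{ij}$ into $\sigma_{ij}^{(+)}$ (resp.\ $\sigma_{ij}^{(-)}$) regardless of where the boundary value sits --- is exactly the intended reasoning, written out in full.
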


In the rest of this subsection, we assume that $I$ is a non-empty
proper subset of $[n]$ and $\cV$ is the non-trivial cone defined
by~\eqref{DefHemiWithGen}, where $J:=[n]\setminus I$ and the sets
$\sigma_{ij}^{(-)}$, which are either of the form $\{\lambda \in
\Rmax \mid \lambda \leq\sigma_{ij}\}$ or $\{\lambda \in \Rmax \mid
\lambda <\sigma_{ij}\}$ with $\sigma_{ij}\in \Rmax\cup \{+\infty\}$,
are such that the pairs $(\sigma^{(-)}_{ij},\sigma^{(+)}_{ij})$,
with $\sigma^{(+)}_{ij}$ defined by
$\sigma^{(+)}_{ij}:=(\Rmax\cup \{+\infty\})\setminus \sigma^{(-)}_{ij}$,
satisfy the rank-one condition~\eqref{e:rankone}. With the objective of showing that any
such cone is a conical hemispace, we first give a detailed
description of the ``thin structure'' of the corresponding
$\sigma$-matrix that follows from the rank-one
condition~\eqref{e:rankone}. This description can be also seen as
one of the main results.

\begin{proposition}\label{p:RD}
If we define
\[
\begin{split}
J_i^< := & \{j\in J\mid \sigma_{ij}\in\Rinv \;\makebox{and}\;\sigma_{ij}\in\sigma_{ij}^{(+)} \}, \\
J_i^{\leq} := & \{j\in J\mid \sigma_{ij}\in\Rinv \;\makebox{and}\;\sigma_{ij}\in\sigma_{ij}^{(-)} \},\\
J_i^{\bzero}:= & \{j\in J\mid  \sigma_{ij}=\bzero\},\\
J_i^{\infty}:= & \{j\in J\mid  \sigma_{ij}=+\infty\},
\end{split}
\]
 for $i\in I$, then by the rank-one condition~\eqref{e:rankone} it follows that:
\begin{enumerate}[(i)]
\item\label{p:RD:P2} $J_i^{\leq}+J_i^<+J_i^{\infty}+J_i^{\bzero}=J$ for each $i\in I$;
\item\label{p:RD:P3}  $J_{i_1}^{\infty}\subseteq J_{i_2}^{\infty}$ or $J_{i_2}^{\infty}\subseteq J_{i_1}^{\infty}$,
and $J_{i_1}^{\bzero}\subseteq J_{i_2}^{\bzero}$ or $J_{i_2}^{\bzero}\subseteq J_{i_1}^{\bzero}$
for any $i_1,i_2\in I$;
\item\label{p:RD:P4} If $(J_{i_1}^{<}\cup J_{i_1}^{\leq})\cap (J_{i_2}^{<}\cup J_{i_2}^{\leq})\neq \emptyset$, then $J_{i_1}^{<}\cup J_{i_1}^{\leq}= J_{i_2}^{<}\cup J_{i_2}^{\leq}$, \break $J_{i_1}^{\infty} = J_{i_2}^{\infty}$ and $J_{i_1}^{\bzero} = J_{i_2}^{\bzero}$;
\item\label{p:RD:P5} If $(J_{i_1}^{<}\cup J_{i_1}^{\leq})\cap (J_{i_2}^{<}\cup J_{i_2}^{\leq})\neq \emptyset$, then $J_{i_1}^{<}\subseteq J_{i_2}^{<}$ or $J_{i_2}^{<}\subseteq J_{i_1}^{<}$;
\item\label{p:RD:P6} If $(J_{i_1}^{<}\cup J_{i_1}^{\leq})\cap (J_{i_2}^{<}\cup J_{i_2}^{\leq})\neq \emptyset$, then
there exists $\lambda \in \Rinv$ such that \break $\sigma_{i_1
j}=\lambda \sigma_{i_2 j}$ for all $j\in J_{i_1}^{<} \cup
J_{i_1}^{\leq}= J_{i_2}^{<}\cup J_{i_2}^{\leq}$.
\end{enumerate}
\end{proposition}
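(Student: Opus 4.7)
My plan is to handle (i)--(v) in order, using increasingly refined consequences of the rank-one condition~\eqref{e:rankone} together with Corollary~\ref{c:rankone}. Item (i) is a direct case analysis: each $\sigma_{ij}\in\Rmax\cup\{+\infty\}$ is either $\bzero$ (forcing $\sigma_{ij}^{(-)}=\{\bzero\}$, hence $j\in J_i^{\bzero}$), $+\infty$ (forcing $\sigma_{ij}^{(-)}=\Rmax$, hence $j\in J_i^{\infty}$), or in $\Rinv$, in which case the strict or non-strict form of $\sigma_{ij}^{(-)}$ places $j$ in exactly one of $J_i^{<}$ or $J_i^{\leq}$. The basic mechanism I will exploit throughout is the following: if $\sigma_{ij}\in\Rinv$ then both $\sigma_{ij}^{(-)}$ and $\sigma_{ij}^{(+)}$ contain elements of $\Rinv$; if $\sigma_{ij}=\bzero$ then $\sigma_{ij}^{(+)}=\Rinv\cup\{+\infty\}$; and if $\sigma_{ij}=+\infty$ then $\sigma_{ij}^{(-)}=\Rmax$. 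In each configuration where two opposing sets contain $\Rinv$ witnesses, their set-products contain nontrivial subsets of $\Rinv$ which can be compared.

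For (ii), suppose towards contradiction that $j_1\in J_{i_1}^{\infty}\setminus J_{i_2}^{\infty}$ and $j_2\in J_{i_2}^{\infty}\setminus J_{i_1}^{\infty}$. Then $\sigma_{i_1j_1}^{(-)}=\sigma_{i_2j_2}^{(-)}=\Rmax$, while $\sigma_{i_1j_2}$ and $\sigma_{i_2j_1}$ are finite so that $\sigma_{i_1j_2}^{(+)}$ and $\sigma_{i_2j_1}^{(+)}$ contain $\Rinv$ elements $c$ and $d$; the product $cd\in\Rinv$ sits in $\sigma_{i_1j_2}^{(+)}\sigma_{i_2j_1}^{(+)}$, and also in $\sigma_{i_1j_1}^{(-)}\sigma_{i_2j_2}^{(-)}=\Rmax\cdot\Rmax=\Rmax$, violating the first half of~\eqref{e:rankone}. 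The analogous argument for $J_i^{\bzero}$ uses the second half of~\eqref{e:rankone}: if $j_1\in J_{i_1}^{\bzero}\setminus J_{i_2}^{\bzero}$ and $j_2\in J_{i_2}^{\bzero}\setminus J_{i_1}^{\bzero}$, then $\sigma_{i_1j_1}^{(+)}\sigma_{i_2j_2}^{(+)}\supseteq\Rinv$ (both factors equal $\Rinv\cup\{+\infty\}$), while $\sigma_{i_1j_2}^{(-)}$ and $\sigma_{i_2j_1}^{(-)}$ each contain an $\Rinv$ element whose product lands in their set-product and in $\Rinv$.

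For (iii), the non-empty intersection gives some $j\in J$ with $\sigma_{i_1j},\sigma_{i_2j}\in\Rinv$, which provides $\Rinv$ witnesses in all four of $\sigma_{i_1j}^{(\pm)}$, $\sigma_{i_2j}^{(\pm)}$ simultaneously. To prove $J_{i_1}^{\infty}\subseteq J_{i_2}^{\infty}$, fix $j'\in J_{i_1}^{\infty}$ and suppose $j'\notin J_{i_2}^{\infty}$, so that $\sigma_{i_2j'}^{(+)}$ contains an $\Rinv$ element $f$ and $\sigma_{i_1j'}^{(-)}=\Rmax$; then for an $\Rinv$ witness $c\in\sigma_{i_1j}^{(+)}$ and any $e\in\sigma_{i_2j}^{(-)}\cap\Rinv$, the scalar $cf\in\Rinv$ belongs to $\sigma_{i_1j}^{(+)}\sigma_{i_2j'}^{(+)}$ and also to $\sigma_{i_1j'}^{(-)}\sigma_{i_2j}^{(-)}=\Rmax\cdot\{e\}=\Rmax$, contradicting~\eqref{e:rankone} applied to $(i_1,i_2,j',j)$. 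Symmetry gives $J_{i_1}^{\infty}=J_{i_2}^{\infty}$. An essentially identical argument, exploiting that $\sigma_{i_1j'}^{(+)}=\Rinv\cup\{+\infty\}$ when $\sigma_{i_1j'}=\bzero$ so that the witness $c=ae/d\in\Rinv$ lands in $\sigma_{i_1j'}^{(+)}$ automatically, yields $J_{i_1}^{\bzero}=J_{i_2}^{\bzero}$; the first equality in (iii) then follows from (i).

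Items (iv) and (v) reduce to Corollary~\ref{c:rankone}. For (iv), if $J_{i_1}^{<}$ and $J_{i_2}^{<}$ were not nested, I could pick $j_1\in J_{i_1}^{<}\setminus J_{i_2}^{<}$ and $j_2\in J_{i_2}^{<}\setminus J_{i_1}^{<}$, both belonging to the common finite part by (iii), so that all four entries $\sigma_{i_kj_l}$ lie in $\Rinv$. Corollary~\ref{c:rankone} then yields the scalar identity $\sigma_{i_1j_1}\sigma_{i_2j_2}=\sigma_{i_1j_2}\sigma_{i_2j_1}$; but the left side lies in $\sigma_{i_1j_1}^{(+)}\sigma_{i_2j_2}^{(+)}$ (as $j_1\in J_{i_1}^{<}$, $j_2\in J_{i_2}^{<}$) and the right in $\sigma_{i_1j_2}^{(-)}\sigma_{i_2j_1}^{(-)}$ (as $j_2\in J_{i_1}^{\leq}$, $j_1\in J_{i_2}^{\leq}$), contradicting~\eqref{e:rankone}. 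For (v), I fix any $j^{*}$ in the common set and let $\lambda:=\sigma_{i_1j^{*}}\sigma_{i_2j^{*}}^{-1}\in\Rinv$; Corollary~\ref{c:rankone} applied to $(i_1,i_2,j^{*},j)$ with $j$ another common element gives $\sigma_{i_1j^{*}}\sigma_{i_2j}=\sigma_{i_1j}\sigma_{i_2j^{*}}$, i.e., $\sigma_{i_1j}=\lambda\sigma_{i_2j}$. The main subtlety throughout is the bookkeeping needed to guarantee that selected witnesses lie in $\Rinv$ so that set products can be handled freely; this is ultimately routine, governed by the assumed forms of $\sigma_{ij}^{(-)}$, and does not constitute a genuine obstacle.
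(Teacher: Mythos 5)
Your proof is correct and follows essentially the same route as the paper's: items (i)--(iii) are the paper's forbidden $2\times 2$ minor arguments with the witness elements of the intersections in~\eqref{e:rankone} written out explicitly, and (iv)--(v) invoke Corollary~\ref{c:rankone} exactly as the paper does. The only cosmetic difference is that in (iii) you prove $J_{i_1}^{\infty}=J_{i_2}^{\infty}$ and $J_{i_1}^{\bzero}=J_{i_2}^{\bzero}$ directly from a common column with both entries in $\Rinv$ and then deduce equality of the finite parts via (i), whereas the paper establishes the finite-part equality first; both orderings work.
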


\begin{proof}
In this proof, we will use $F$, $\geq F$ and $\leq F$
to represent an entry of a matrix which belongs to $\Rinv$,
$\Rinv \cup \{+\infty\}$ and $\Rinv \cup \{\bzero\} =\Rmax$, respectively.

\eqref{p:RD:P2} This property readily follows from the definition of the sets
$J_i^<$, $J_i^{\leq}$, $J_i^{\bzero}$, and $J_i^{\infty}$.

\eqref{p:RD:P3} If these conditions are violated, then the
$\sigma$-matrix has one of the following $2\times 2$ minors
\[
\begin{pmatrix}
+\infty & \leq F\\
\leq F & +\infty
\end{pmatrix}
\; ,\quad
\begin{pmatrix}
\bzero & \geq F\\
\geq F & \bzero
\end{pmatrix}\; ,
\]
violating~\eqref{e:rankone}.

\eqref{p:RD:P4} If this condition is violated, then the
$\sigma$-matrix has one of the following $2\times 2$ minors
\[
\begin{pmatrix}
F & F\\
\bzero & F
\end{pmatrix},\quad
\begin{pmatrix}
F & F\\
+\infty & F
\end{pmatrix},\quad
\begin{pmatrix}
+\infty & F\\
\bzero & F
\end{pmatrix},
\]
violating~\eqref{e:rankone}. More precisely, one of the first two minors will appear when
$(J_{i_1}^{<}\cup J_{i_1}^{\leq})\cap (J_{i_2}^{<}\cup J_{i_2}^{\leq})\neq\emptyset$ but
$(J_{i_1}^{<}\cup J_{i_1}^{\leq})\neq (J_{i_2}^{<}\cup J_{i_2}^{\leq})$. The third one will appear if
$(J_{i_1}^{<}\cup J_{i_1}^{\leq})=(J_{i_2}^{<}\cup J_{i_2}^{\leq})\neq\emptyset$ but
$J_{i_1}^{\infty}\neq J_{i_2}^{\infty}$ (equivalently, $J_{i_1}^{\bzero}\neq J_{i_2}^{\bzero}$).

\eqref{p:RD:P5} If $J_{i_1}^{<}\subseteq J_{i_2}^{<}$ and
$J_{i_2}^{<}\subseteq J_{i_1}^{<}$ do not hold for some $i_1,i_2$,
then there exist $j_1$ and $j_2$ such that
$\sigma_{i_1j_1}\in\sigma_{i_1j_1}^{(+)}$,
$\sigma_{i_2j_2}\in\sigma_{i_2j_2}^{(+)}$,
$\sigma_{i_1j_2}\in\sigma_{i_1j_2}^{(-)}$,
$\sigma_{i_2j_1}\in\sigma_{i_2j_1}^{(-)}$, and
$\sigma_{i_1j_1},\sigma_{i_1j_2},\sigma_{i_2j_1},\sigma_{i_2,j_2}\in\Rinv$.
However, this contradicts the rank-one condition~\eqref{e:rankone},
since
$\sigma_{i_1j_1}\sigma_{i_2j_2}=\sigma_{i_1j_2}\sigma_{i_2j_1}$ by
Corollary~\ref{c:rankone}.

\eqref{p:RD:P6} This property follows from Corollary~\ref{c:rankone}
and Property~\eqref{p:RD:P4}.
\end{proof}

\begin{remark}\label{r:longeriii}
Regarding Property~\eqref{p:RD:P3} of Proposition~\ref{p:RD},
observe that the condition ``$J_{i_1}^{\infty}\subseteq J_{i_2}^{\infty}$ or $J_{i_2}^{\infty}\subseteq J_{i_1}^{\infty}$'' can be equivalently formulated as
``$J_{i_1}^{<}\cup J_{i_1}^{\leq}\cup J_{i_1}^{\bzero}\subseteq J_{i_2}^{<}\cup J_{i_2}^{\leq}\cup J_{i_2}^{\bzero}$ or $J_{i_2}^{<}\cup J_{i_2}^{\leq}\cup J_{i_2}^{\bzero}\subseteq J_{i_1}^{<}\cup J_{i_1}^{\leq}\cup J_{i_1}^{\bzero}$'' for any $i_1,i_2\in I$.
Similarly, the condition ``$J_{i_1}^{\bzero}\subseteq J_{i_2}^{\bzero}$ or $J_{i_2}^{\bzero}\subseteq J_{i_1}^{\bzero}$'' can be equivalently formulated as
``$J_{i_1}^{<}\cup J_{i_1}^{\leq}\cup J_{i_1}^{\infty}\subseteq J_{i_2}^{<}\cup J_{i_2}^{\leq}\cup J_{i_2}^{\infty}$ or $J_{i_2}^{<}\cup J_{i_2}^{\leq}\cup J_{i_2}^{\infty}\subseteq J_{i_1}^{<}\cup J_{i_1}^{\leq}\cup J_{i_1}^{\infty}$'' for any $i_1,i_2\in I$.
\end{remark}

Consider the equivalence relation on $I$ defined by
\[
i_1 \sim i_2 \iff J_{i_1}^{\infty}=J_{i_2}^{\infty}
\makebox{ and } J_{i_1}^{\bzero}=J_{i_2}^{\bzero}.
\]
By Proposition~\ref{p:RD} part~\eqref{p:RD:P3}, the relation
\[
i_1\preceq i_2 \iff
\begin{cases}
J_{i_2}^{\infty} \subset J_{i_1}^{\infty} \makebox{ or }\\
J_{i_2}^{\infty} = J_{i_1}^{\infty} \makebox{ and } J_{i_1}^{\bzero} \subseteq J_{i_2}^{\bzero}
\end{cases}
\]
defines a total order on $I$, which induces a total order
(also denoted by $\preceq$) on the equivalence classes associated with $\sim$.
Assume that $I^1, \ldots , I^p$ are these equivalence classes and that
$I^1 \preceq I^2 \preceq \cdots \preceq I^p$.

By definition,
note that there exist subsets $L^1,\ldots,L^p$, $K^1,\ldots ,K^p$,
and $J^1,\ldots ,J^p$ of $J$, such that $J_i^{\bzero}=L^r$,
$J_{i}^{\infty}=K^r$ and $J_{i}^{<}\cup J_{i}^{\leq}=J^r$ for $i\in I^r$. Thus,
by Proposition~\ref{p:RD} part~\eqref{p:RD:P2}, it follows that
\[
J^r+K^r+L^r=J
\]
for $r\in [p]$, and from part~\eqref{p:RD:P4}
we conclude that the sets $J^1,\ldots ,J^p$
are pairwise disjoint. Moreover, for $r\in [2,p]$ we have
\begin{equation}\label{PropertyJrcupKrsubsetKr-1}
J^r \cup K^r \subseteq K^{r-1} ,
\end{equation}
or equivalently
\[
J^{r-1}\cup L^{r-1}\subseteq L^r.
\]
Indeed, if $i_1\in I^{r-1}$ and $i_2\in I^r$,
using Remark~\ref{r:longeriii} we conclude that
either $J_{i_2}^<\cup J_{i_2}^{\leq}\cup J_{i_2}^{\infty}\subseteq
J_{i_1}^<\cup J_{i_1}^{\leq}\cup J_{i_1}^{\infty}$ or $J_{i_1}^<\cup J_{i_1}^{\leq}\cup J_{i_1}^{\infty}\subseteq J_{i_2}^<\cup J_{i_2}^{\leq}\cup J_{i_2}^{\infty}$.
Using Proposition~\ref{p:RD} part~\eqref{p:RD:P3} and the fact that
$J^{r-1}=J_{i_1}^<\cup J_{i_1}^{\leq}$ and $J^r=J_{i_2}^<\cup J_{i_2}^{\leq}$
are disjoint, it follows that either
$J_{i_2}^<\cup J_{i_2}^{\leq}\cup J_{i_2}^{\infty}\subseteq J_{i_1}^{\infty}$ or
$J_{i_1}^<\cup J_{i_1}^{\leq}\cup J_{i_1}^{\infty}\subseteq J_{i_2}^{\infty}$.
In the former case, we have $J^r\cup K^r=J_{i_2}^<\cup J_{i_2}^{\leq}\cup J_{i_2}^{\infty} \subseteq  J_{i_1}^{\infty}=K^{r-1}$.
In the latter case, as  $i_1 \preceq i_2$,
we have $J_{i_2}^{\infty}\subseteq J_{i_1}^{\infty}$ and so
$K^{r-1}=J_{i_1}^{\infty}=J_{i_2}^{\infty}=K^r$
and $J^{r-1}= J_{i_1}^<\cup J_{i_1}^{\leq}=\emptyset$.
Thus, $L^{r-1}=J_{i_1}^{\bzero}\subseteq J_{i_2}^{\bzero}=L^r$
because $i_1 \preceq i_2$,
which implies $J^r\cup K^r=J\setminus L^r\subseteq J\setminus L^{r-1}=J^{r-1}\cup K^{r-1}= K^{r-1}$.

Finally, note that by Proposition~\ref{p:RD} part~\eqref{p:RD:P5}, we have
\begin{equation}\label{PropertyChain}
J_{i_1}^<\subseteq J_{i_2}^< \;\makebox{ or }\;
J_{i_2}^<\subseteq J_{i_1}^<
\end{equation}
for all $i_1,i_2\in I^r$ and $r\in [p]$.

Observe that $\cV$ is also generated by the set
\[
\bigcup_{i\in I}  \Bigl( \left\{ \uvector{i}\right\} \cup \left\{\uvector{i}\oplus \sigma_{ij} \uvector{j} \mid j\in J_i^{\leq}\right\}
 \cup  \left\{ \uvector{i}\oplus \lambda \uvector{j} \mid j\in J_i^{<} , \lambda < \sigma_{ij}\right\} \cup \left\{ \uvector{i}\oplus \lambda \uvector{j} \mid j\in J_i^{\infty} , \lambda \in \Rinv  \right\} \Bigr) \; ,
\]
since any vector of the form $\uvector{i}\oplus \lambda
\uvector{j}$, where $j\in J_i^{\leq}$ and $\lambda< \sigma_{ij}$,
can be expressed as a linear combination of $\uvector{i}\oplus
\sigma_{ij} \uvector{j}$ and $\uvector{i}$. Moreover, defining
\begin{equation}\label{def:CiDi}
\begin{split}
\cC_i&:=\cone\left(\left\{ \uvector{i}\right\}\cup \left\{\uvector{i}\oplus \sigma_{ij} \uvector{j} \mid j\in J_i^{\leq}\right\} \cup  \left\{ \uvector{i}\oplus \lambda \uvector{j} \mid j\in J_i^{<} , \lambda < \sigma_{ij}\right\}\right),\\
\cD_i&:=\cone\left(\left\{ \uvector{i}\right\}  \cup \left\{ \uvector{i}\oplus \lambda \uvector{j} \mid j\in J_i^{\infty} , \lambda \in \Rinv  \right\} \right),
\end{split}
\end{equation}
for $i\in I$, we have $\cV=\mysup_{i\in I} (\cC_i\oplus \cD_i)$.

\begin{lemma}\label{l:Ci}
There exist $\beta_h \in \Rinv$, for $h\in I$, and
$\gamma_j \in \Rinv$, for $j\in \cup_{i\in I}(J_i^{\leq}\cup J_i^{<})$,
such that for each $i\in I$, the set of non-null vectors of the cone $\cC_i$ is the set
of vectors satisfying
\begin{equation}\label{e:Ci}
\left\{
\begin{array}{l}
\gamma_j x_j\leq \beta_i x_i\;\makebox{ for all } \;j\in J_i^{\leq}  \\
\gamma_j x_j< \beta_i x_i
\;\makebox{ for all } \;j\in J_i^{<} \\
x_j=\bzero\;\makebox{ for all } \;j\in J_i^{\bzero}\cup J_i^{\infty}\cup (I\bez \{i\})
\end{array}
\right.
\end{equation}
\end{lemma}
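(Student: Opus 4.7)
The plan is to build the scalars $\beta_h$ and $\gamma_j$ using the hierarchical structure of the $\sigma$-matrix that the paper has just established, and then to check inclusion in both directions.

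First I would organize the constants. Let $I^1,\ldots,I^p$ be the equivalence classes for $\sim$ with representatives $i_r\in I^r$, and recall that the blocks $J^1,\ldots,J^p$ are pairwise disjoint subsets of $J$, with $J^r=J_i^{\leq}\cup J_i^{<}$ for every $i\in I^r$. Since $\sigma_{i_r j}\in\Rinv$ for every $j\in J^r$, I set
\[
\gamma_j := \sigma_{i_r j}^{-1} \in \Rinv
\]
unambiguously for $j\in\bigcup_r J^r$. Within $I^r$, Proposition~\ref{p:RD}\eqref{p:RD:P6} gives a unique $\lambda_i\in\Rinv$ with $\sigma_{ij}=\lambda_i\sigma_{i_r j}$ for all $j\in J^r$ (take $\lambda_i=\bunity$ if $J^r=\emptyset$, and $\lambda_{i_r}=\bunity$), and I set $\beta_i:=\lambda_i$. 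By construction one has the key identity $\gamma_j\sigma_{ij}=\beta_i$ for every $i\in I$ and every $j\in J_i^{\leq}\cup J_i^{<}$.

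Second, I would verify that every generator of $\cC_i$ satisfies~\eqref{e:Ci}, and that the system is preserved under tropical addition and scalar multiplication. The vectors $\uvector{i}$, $\uvector{i}\oplus\sigma_{ij}\uvector{j}$ (with $j\in J_i^{\leq}$) and $\uvector{i}\oplus\lambda\uvector{j}$ (with $j\in J_i^{<}$, $\lambda<\sigma_{ij}$) all have support in $\{i\}\cup J_i^{\leq}\cup J_i^{<}$, and the identity $\gamma_j\sigma_{ij}=\beta_i$ yields equality in the $J_i^{\leq}$-case and strict inequality in the $J_i^{<}$-case, with the remaining inequalities being trivial. Closure under scalar multiplication is immediate; closure under $\oplus$ is obvious for the weak inequalities, and for the strict ones one uses that, for each $j\in J_i^{<}$, the only generators contributing to coordinate $j$ are of the form $\uvector{i}\oplus\lambda\uvector{j}$ with $\lambda<\sigma_{ij}$, and finitely many contribute so the maximum is attained.

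Third, for the reverse inclusion I would take $x\neq\bzero$ satisfying~\eqref{e:Ci}. A short case analysis shows $x_i\neq\bzero$ (if $x_i=\bzero$ the constraints force every $x_j$ to vanish). Using $\gamma_j\sigma_{ij}=\beta_i$, the system rewrites as $x_j\leq\sigma_{ij}x_i$ for $j\in J_i^{\leq}$ and $x_j<\sigma_{ij}x_i$ for $j\in J_i^{<}$, so I can write
\[
x = x_i\,\uvector{i}\;\oplus\;\bigoplus_{j\in J_i^{\leq}}\sigma_{ij}^{-1}x_j\,(\uvector{i}\oplus\sigma_{ij}\uvector{j})\;\oplus\;\bigoplus_{\substack{j\in J_i^{<}\\ x_j\neq\bzero}} x_i\,(\uvector{i}\oplus x_jx_i^{-1}\uvector{j}),
\]
and verify the equality coordinate by coordinate. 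The strict bound $x_jx_i^{-1}<\sigma_{ij}$ for $j\in J_i^{<}$ is precisely what certifies that each term in the last sum is a legitimate generator of $\cC_i$, so $x\in\cC_i$.

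The main obstacle is the first step: producing $\beta_h$ and $\gamma_j$ so that a single choice works simultaneously for all $i\in I$. This rests on two facts from Proposition~\ref{p:RD}, namely the pairwise disjointness of $J^1,\ldots,J^p$ (which makes $\gamma_j=\sigma_{i_r j}^{-1}$ unambiguous) and the proportionality of the rows $(\sigma_{ij})_{j\in J^r}$ within each equivalence class $I^r$ (which makes $\gamma_j\sigma_{ij}=\beta_i$ hold with a single $\beta_i$). Both are consequences of the rank-one condition~\eqref{e:rankone}; the rest of the proof is essentially coordinate-wise bookkeeping.
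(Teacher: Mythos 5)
Your proof is correct and takes essentially the same route as the paper's: both rest on part~(vi) of Proposition~\ref{p:RD} to obtain the factorization $\sigma_{ij}=\gamma_j^{-1}\beta_i$ (you merely make explicit, via class representatives and the disjointness of the $J^r$, the construction that the paper leaves implicit), then rescale the generators of $\cC_i$ and verify the two inclusions by the same coordinate-wise computations. No gaps.
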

\begin{proof}
Proposition~\ref{p:RD} part~\eqref{p:RD:P6} implies that there exist
$\beta_i,\gamma_j\in\Rinv$ such that
$\sigma_{ij}=\gamma_j^{-1}\beta_i$ for all $\sigma_{ij}\in\Rinv $.
Thus, the cone $\cC_i$ can be equivalently defined by
\[
\cC_i=\cone\left(\left\{ \uvector{i}\right\}\cup \left\{\gamma_j\uvector{i}\oplus \beta_i \uvector{j} \mid j\in J_i^{\leq}\right\} \cup  \left\{ \gamma_j \uvector{i}\oplus \lambda \beta_i \uvector{j} \mid j\in J_i^{<} , \lambda < \bunity \right\}\right) .
\]
Next, any non-null vector $x\in\cC_i$ can be written as a linear combination of vectors in the cones
\[
\begin{split}
\cC^\leq_{ij}&:=\cone\left(\left\{\uvector{i}\right\}\cup
\left\{ \gamma_j \uvector{i}\oplus \beta_i \uvector{j} \mid j\in J_i^{\leq}\right\}\right) , \\
\cC^<_{ij}&:=\cone\left(\left\{\uvector{i}\right\}\cup  \left\{ \gamma_j \uvector{i}\oplus \lambda\beta_i \uvector{j} \mid j\in J_i^{<},\lambda < \bunity\right\}\right),
\end{split}
\]
with the same coefficient $x_i$ at $\uvector{i}$.
The generators of $\cC^\leq_{ij}$ and $\cC^<_{ij}$ satisfy
the first and second conditions of~\eqref{e:Ci} respectively,
hence $x$ also satisfies all these conditions.
Conversely, each non-null vector $x$ satisfying~\eqref{e:Ci} can be written
(using similar ideas to those in the proof of Proposition~\ref{p:ckigenconv})
as a linear combination of the generators of $\cC^\leq_{ij}$ and $\cC^<_{ij}$,
and so it belongs to $\cC_i$.
\end{proof}

\if{
\begin{proof}
Proposition~\ref{p:RD} part~\eqref{p:RD:P6} implies that there exist
$\beta_i,\gamma_j\in\Rinv$ such that
$\sigma_{ij}=\gamma_j^{-1}\beta_i$ for all $\sigma_{ij}\in\Rinv $.
Thus, the cone $\cC_i$ can be equivalently defined by
\begin{equation}
\label{e:cci}
\cC_i=\cone\left(\left\{ \uvector{i}\right\}\cup \left\{\gamma_j\uvector{i}\oplus \beta_i \uvector{j} \mid j\in J_i^{\leq}\right\} \cup  \left\{ \gamma_j \uvector{i}\oplus \lambda \beta_i \uvector{j} \mid j\in J_i^{<} , \lambda < \bunity \right\}\right) .
\end{equation}
Let $x$ be a non-null vector in $\cC_i$.
All generators of $\cC_i$ in~\eqref{e:cci} satisfy
all conditions of~\eqref{e:Ci}. As the set described by~\eqref{e:Ci} (enlarged with the $\bzero$ vector)
is a cone, $x$ also satisfies all these conditions.
Conversely, each non-null vector $x$ satisfying~\eqref{e:Ci} can be written
(using similar ideas to those in the proof of Proposition~\ref{p:ckigenconv})
as a linear combination of the generators of $\cC^\leq_{ij}$ and $\cC^<_{ij}$,
and so it belongs to $\cC_i$.
\end{proof}
}\fi

Later we will show that certain Minkowski sums of the cones $\cC_i$
are conical hemispaces. To this end, note that $\cC_i = \left\{x \in
\Rmax^n\mid x_j=\bzero \; \makebox{ for} \;j\neq i\right\}$ if
$J_{i}^{<}\cup J_{i}^{\leq}= \emptyset$, and so
\begin{equation}\label{e:hemitriv}
\mysup_{i\in \Tilde{I}} \cC_i = \left\{x \in \Rmax^n\mid x_j=\bzero \; \makebox{ for all} \;j\not \in \Tilde{I}\right\}
\end{equation}
when for $\Tilde{I}\subseteq I$ we have $J_{i}^{<}\cup J_{i}^{\leq}=
\emptyset$ for all $i\in \Tilde{I}$. Evidently, any set given
by~\eqref{e:hemitriv} is a conical hemispace.

\begin{remark}\label{RemarkNullVectorOnly}
Since $\cV = \mysup_{i\in I} (\cC_i\oplus \cD_i)$,
observe that the null vector $\bzero$ is the only vector $x$ in $\cV$
satisfying $x_i = \bzero$ for all $i\in I$.
\end{remark}

\begin{theorem}\label{t:hatx}
Given $x\in \Rmax^n$, if $x_i \neq \bzero$ for some $i\in I$,
let $h:=\min \{r\in [p] \mid x_{t}\neq \bzero \makebox{ for some } t \in I^r \}$
and $\hat{x}\in \Rmax^n$ be the vector defined by
$\hat{x}_k:=\bzero$ if $k\in \left(\cup_{r>h} I^r\right) \cup K^h$
and $\hat{x}_k:=x_k$ otherwise.
Then, $x\in \cV$ if and only if $\hat{x}\in \mysup_{i\in I^h} \cC_i$.
\end{theorem}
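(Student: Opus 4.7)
The plan is to exploit the decomposition $\cV = \mysup_{i \in I}(\cC_i \oplus \cD_i)$ from~\eqref{def:CiDi} together with the structural results immediately following Proposition~\ref{p:RD}. The key input is the inclusion $J^r \cup K^r \subseteq K^{r-1}$ from~\eqref{PropertyJrcupKrsubsetKr-1}, which when iterated yields $J^r \cup K^r \subseteq K^h$ for all $r > h$, together with the monotonicity $L^h \subseteq L^r$ for $r \geq h$. I will also repeatedly use that every generator of $\cC_i$ and $\cD_i$ has $\bunity$ as its $i$-th coordinate and $\bzero$ on all other coordinates in $I$, so that the $I$-part of a summand $c_i \oplus d_i$ determines whether it vanishes.

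For the forward implication, write $x = \mysup_{i \in I}(c_i \oplus d_i)$ with $c_i \in \cC_i$ and $d_i \in \cD_i$. The preceding observation and the minimality of $h$ force $c_i = d_i = \bzero$ for all $i \in \bigcup_{r < h} I^r$. For $i \in I^r$ with $r > h$, the support of $c_i \oplus d_i$ is contained in $\{i\} \cup J^r \cup K^r \subseteq \{i\} \cup K^h$, which is exactly the set of coordinates zeroed out in the passage $x \mapsto \hat x$; hence these summands do not contribute to $\hat x$. For $i \in I^h$, the support of $c_i \oplus d_i$ lies in $\{i\} \cup J^h \cup K^h$, and zeroing the $K^h$-part produces $c_i \oplus d_{i,i}\uvector{i} \in \cC_i$ (since $\uvector{i}$ is a generator of $\cC_i$). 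Summing over $i \in I^h$ recovers $\hat x$; a separate check using $L^h \subseteq L^r$ for $r \geq h$ confirms that no surviving summand contributes to coordinates in $L^h$, so $\hat x_j = x_j = \bzero$ there, consistent with $\hat x \in \mysup_{i \in I^h} \cC_i$.

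For the reverse implication, $\hat x \in \mysup_{i \in I^h} \cC_i \subseteq \cV$ is immediate. Since the supports of elements of $\mysup_{i \in I^h} \cC_i$ lie in $I^h \cup J^h$, the definition of $\hat x$ forces $x_j = \bzero$ for $j \in L^h$. Pick $i_0 \in I^h$ with $x_{i_0} \neq \bzero$, which exists by the definition of $h$. For each $i' \in \bigcup_{r > h} I^r$ with $x_{i'} \neq \bzero$ the vector $x_{i'}\uvector{i'}$ lies in $\cC_{i'} \subseteq \cV$; for each $k \in K^h$ with $x_k \neq \bzero$, choosing $\lambda_k \in \Rinv$ with $\lambda_k \geq x_k (x_{i_0})^{-1}$, the vector $x_k \lambda_k^{-1}(\uvector{i_0} \oplus \lambda_k \uvector{k})$ lies in $\cD_{i_0} \subseteq \cV$. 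Then
\[
x = \hat x \;\oplus\; \bigoplus_{i' \in \bigcup_{r > h} I^r,\; x_{i'} \neq \bzero} x_{i'}\uvector{i'} \;\oplus\; \bigoplus_{k \in K^h,\; x_k \neq \bzero} x_k \lambda_k^{-1}\bigl(\uvector{i_0} \oplus \lambda_k \uvector{k}\bigr),
\]
and the choice of $\lambda_k$ ensures the mass added at the $i_0$-coordinate stays at or below $x_{i_0} = \hat x_{i_0}$ and is absorbed. A coordinate-by-coordinate verification, using that $I^h$, $\bigcup_{r > h} I^r$, $\bigcup_{r < h} I^r$, $J^h$, $K^h$, $L^h$ partition $[n]$ consistently with the supports of each summand, gives equality.

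The main obstacle is the support bookkeeping in the forward direction: proving that the supports of $c_i \oplus d_i$ for $i \in I^r$ with $r > h$ are entirely absorbed into the coordinate set zeroed by the $\hat x$ construction, and that no contribution from any $c_i \oplus d_i$ with $r \geq h$ lands in $L^h$. Both facts rely on iterating~\eqref{PropertyJrcupKrsubsetKr-1} down the chain $I^1 \preceq \cdots \preceq I^p$ and on the monotonicity properties of $\{K^r\}$ and $\{L^r\}$ along that chain, and the indexing requires care. Once these containments are secured, the rest of the argument is routine coordinate-wise computation.
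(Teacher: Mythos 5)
Your proposal is correct and follows essentially the same route as the paper's proof: both directions rest on the decomposition $\cV=\mysup_{i\in I}(\cC_i\oplus\cD_i)$, the iterated inclusion $J^r\cup K^r\subseteq K^h$ for $r>h$ from~\eqref{PropertyJrcupKrsubsetKr-1}, and the explicit reconstruction of $x$ from $\hat x$ by adding unit vectors $\uvector{i'}$ and generators $\uvector{i_0}\oplus\lambda\uvector{k}$ of $\cD_{i_0}$ with $k\in K^h$. The only cosmetic difference is that the paper adjusts the summands $y^i$ by adding $x_i\uvector{i}$ rather than replacing $d_i$ by $d_{i,i}\uvector{i}$, and scales the $\cD$-generators so the $i_0$-coordinate contribution is exactly $x_{i_0}$ rather than merely dominated by it.
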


\begin{proof}
The ``if'' part: Let $t \in I^h$ be such that $x_{t}\neq \bzero$. Then, by the definition of $\hat{x}$ we have
\[
x=\hat{x} \oplus \left(\bigoplus_{i\in \cup_{r>h} I^r} x_i \uvector{i}\right)\oplus
\left(\bigoplus_{j\in K^h} x_t (\uvector{t}\oplus x_t^{-1} x_j  \uvector{j})\right) \; .
\]
It follows that $x\in \cV$ because $\hat{x}\in\mysup_{i\in I^h} \cC_i\subseteq\cV$,
$\uvector{i}\in\cV$ for all $i\in I$ and $\uvector{i}\oplus \lambda \uvector{j}\in\cV$
for all $i\in I^h$, $j\in K^h$ and $\lambda\in \Rinv $.

The ``only if'' part: Let $x\in \cV$.
As $\cV=\mysup_{i\in I}(\cC_i\oplus\cD_i)$,
we have $x=\mysup_{i\in I} (y^i\oplus z^i)$ for some $y^i\in\cC_i$ and
$z^i\in\cD_i$. Note that $y^i\oplus z^i=\bzero$ for $i\in I^r$ with $r<h$ since
$y_i^i\oplus z_i^i=x_i=\bzero$ for such vectors. So
$x=\mysup_{i\in\cup_{r\geq h} I^r} (y^i\oplus z^i)$.

We will show that $y^i$ can be chosen so that
$\hat{x}=\mysup_{i\in I^h} y^i\in\mysup_{i\in I^h}\cC_i$.
For this, observe that for all $i\in I^h$, since $\uvector{i}\in\cC_i$,
we can assume $x_i=\hat{x}_i=y^i_i$,
adding $x_i\uvector{i}$ to $y^i$ if necessary. This fixes our choice of $y^i$.
Then by~\eqref{PropertyJrcupKrsubsetKr-1},
for $r> h$ we have $J^r\cup K^r\subseteq K^h$,
or equivalently, $J^h\cup L^h\subseteq L^r$.
It follows from~\eqref{def:CiDi} and the above that $\supp (y^i\oplus z^i) \subseteq  I^r\cup K^r\cup J^r =  I^r\cup (J\setminus L^r)
\subseteq [n]\setminus (I^h\cup J^h\cup L^h)$ for $i\in I^r$ and $r>h$. Thus, $\hat{x}_k=x_k=\left(\bigoplus_{i\in I^h} (y^i\oplus z^i)\right)_k$ for all
$k\in I^h\cup J^h\cup L^h$. Moreover, since we have $\supp (z^i)\subseteq K^h\cup\{i\}$ (from~\eqref{def:CiDi}) and $x_i=\hat{x}_i=y_i^i$ for $i\in I^h$, it follows that $\hat{x}_k=x_k=\left(\bigoplus_{i\in I^h} y^i\right)_k$ for all
$k\in I^h\cup J^h\cup L^h$. Finally,
the claim follows from the fact that $\hat{x}_k=\bzero =\left(\bigoplus_{i\in I^h} y^i\right)_k$ for
$k\not \in I^h\cup J^h\cup L^h$.
\end{proof}

We now describe $\mysup_{i\in I^r}\cC_i$ as the set of vectors lying in a
halfspace~\eqref{e:Ci-cond1} and satisfying a constraint~\eqref{e:Ci-cond2}.

\begin{lemma}\label{l:Ci-halfs}
If $J^r\neq\emptyset$, then the non-null elements of the cone
$\mysup_{i\in I^r}\cC_i$ are the vectors $x\in\Rmax^n$ that satisfy
$x_i\neq\bzero$ for some $i\in I^r$,
\begin{equation}\label{e:Ci-cond1}
\mysup_{j\in J^r} \gamma_j x_j \leq \mysup_{i\in I^r} \beta_i x_i\makebox{ and }   x_j=\bzero \makebox{ for } j\notin I^r\cup J^r,
\end{equation}
and, in addition,
\begin{equation}\label{e:Ci-cond2}
\gamma_j x_j =\mysup_{i\in I^r} \beta_i x_i \implies
\exists k \in I^r \makebox{ such that } \gamma_j x_j=\beta_{k} x_{k} \makebox{ and } j\in J_{k}^{\leq}.
\end{equation}
\end{lemma}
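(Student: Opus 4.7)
The plan is to prove both inclusions between the set of non-null elements of $\mysup_{i\in I^r}\cC_i$ and the set $S$ of vectors $x\in\Rmax^n$ fulfilling the stated conditions, working directly from the explicit description~\eqref{e:Ci} of each $\cC_i$ given by Lemma~\ref{l:Ci}.

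For the forward inclusion, I would start from $x=\bigoplus_{i\in I^r}y^i$ with each $y^i\in\cC_i$ and $x\neq\bzero$. The support constraint in~\eqref{e:Ci} forces $y^i_k=\bzero$ for $k\in(I\setminus\{i\})\cup L^r\cup K^r$, so $y^i$ lives on $\{i\}\cup J^r$. Since $J^r$, $K^r$, $L^r$ are pairwise disjoint and $I\cap J=\emptyset$, this yields $x_k=\bzero$ for $k\notin I^r\cup J^r$ and $x_k=y^k_k$ for $k\in I^r$. Moreover, all generators of $\cC_i$ have non-zero $i$-th coordinate, so any non-null $y^i$ satisfies $y^i_i\neq\bzero$; hence $x\neq\bzero$ forces $x_i\neq\bzero$ for some $i\in I^r$. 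Taking the max over $i\in I^r$ of $\gamma_jy^i_j\leq\beta_iy^i_i$ gives the inequality~\eqref{e:Ci-cond1}. For~\eqref{e:Ci-cond2}, assume $\gamma_jx_j=M:=\bigoplus_{i\in I^r}\beta_ix_i$ and choose $k\in I^r$ attaining $\gamma_jy^k_j=\gamma_jx_j$; then $\gamma_jy^k_j=M\geq\beta_ky^k_k$, and comparison with the defining inequalities of $\cC_k$ rules out $j\in J_k^{<}$, so $j\in J_k^{\leq}$ and all inequalities collapse to give $\gamma_jx_j=\beta_kx_k$.

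For the reverse inclusion, given $x\in S$, set $M:=\bigoplus_{i\in I^r}\beta_ix_i$, which is $>\bzero$ since some $x_i\neq\bzero$. I assign to each $j\in J^r$ a single index $k(j)\in I^r$ as follows: if $\gamma_jx_j<M$, pick any $k(j)$ with $\beta_{k(j)}x_{k(j)}=M$; if $\gamma_jx_j=M$, use~\eqref{e:Ci-cond2} to pick $k(j)$ with $\beta_{k(j)}x_{k(j)}=\gamma_jx_j$ and $j\in J_{k(j)}^{\leq}$. Then define $y^i\in\Rmax^n$ by $y^i_i:=x_i$, $y^i_j:=x_j$ for those $j\in J^r$ with $k(j)=i$, and $y^i_k:=\bzero$ otherwise. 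The identity $x=\bigoplus_{i\in I^r}y^i$ is immediate, and the support condition of~\eqref{e:Ci} holds since $\{i\}\cup J^r$ is disjoint from $L^r\cup K^r\cup(I\setminus\{i\})$. The only non-trivial inequality check is for $j\in J_i^{<}$ with $y^i_j=x_j$: by construction $k(j)=i$, and this assignment can only arise from the case $\gamma_jx_j<M=\beta_ix_i$ (the equality case forces $j\in J_{k(j)}^{\leq}$), giving the required strict inequality $\gamma_jy^i_j<\beta_iy^i_i$.

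The main obstacle is the interplay between the strict and non-strict inequalities encoded in $J_i^{<}$ versus $J_i^{\leq}$, which is precisely what makes condition~\eqref{e:Ci-cond2} necessary. Without it, when the maximum $M$ is attained by some $\gamma_jx_j$, every candidate $k$ with $\beta_kx_k=M$ might satisfy $j\in J_k^{<}$, forbidding the strict inequality from holding at equality. Both directions of the proof turn entirely on recognizing that~\eqref{e:Ci-cond2} is exactly the right condition to remove this obstruction, serving as both necessary in the forward direction and a guaranteed choice in the reverse construction.
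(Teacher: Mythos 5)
Your proposal is correct and follows essentially the same route as the paper's proof: both directions hinge on the explicit description~\eqref{e:Ci} from Lemma~\ref{l:Ci}, the necessity argument picks $k$ with $y^k_j=x_j$ and collapses the chain of inequalities to force $j\in J_k^{\leq}$, and your reverse construction (assigning each $j\in J^r$ to a maximizer $k(j)$, using~\eqref{e:Ci-cond2} in the equality case) produces exactly the decomposition $x=\bigoplus_i\bigl(x_i\uvector{i}\oplus\bigoplus_{k(j)=i}x_j\uvector{j}\bigr)$ that the paper builds from the generators $\uvector{k}\oplus x_jx_k^{-1}\uvector{j}$. No gaps.
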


\begin{proof}
Assume first that the conditions are satisfied for $x\in \Rmax^n$.
Given $j\in J^r$, if $\gamma_j x_j = \bigoplus_{i\in I^r} \beta_i
x_i$, let ${k}\in I^r$ be such that $\beta_{k} x_{k}=\bigoplus_{i\in
I^r} \beta_i x_i$ and $j\in J_{k}^{\leq}$. Then, the vector $y^{k
j}:=\uvector{k}\oplus x_j x_{k}^{-1} \uvector{j}$ belongs to
$\cC_{k}$ because $j\in J_{k}^{\leq}$ and $x_j
x_{k}^{-1}=\beta_{k}\gamma_j^{-1}=\sigma_{k j}$. Given $j\in J^r$
such that $\gamma_{j} x_j < \bigoplus_{i\in I^r} \beta_i x_i$, let
$k$ be any element of $I^r$ such that $\beta_{k} x_{k}$ attains the
maximum in $\bigoplus_{i\in I^r} \beta_i x_i$. The vector $y^{k
j}:=\uvector{k}\oplus x_j x_{k}^{-1} \uvector{j}$ again belongs to
$\cC_{k}$, because $j\in J_{k}^{\leq}\cup J_{k}^<$ and $x_j
x_{k}^{-1}<\beta_{k}\gamma_j^{-1}=\sigma_{k j}$. Since
$\uvector{i}\in \cC_i$ for all $i\in I^r$, it readily follows that
$x\in \mysup_{i\in I^r}\cC_i$ as a sum of $x_i\uvector{i}$ for $i\in
I^r$ and $x_{k} y^{k j}=x_{k} \uvector{k}\oplus x_j\uvector{j}$ over
all $y^{k j}$ considered above.

Assume now that $x\in\mysup_{i\in I^r}\cC_i$ is non-null.
Represent $x= \mysup_{i\in I^r}y^i$ where $y^i\in\cC_i$.
Using~\eqref{e:Ci} we observe that each vector $y$ in $\cC_i$ for $i\in I^r$ satisfies
$\mysup_{j\in J^r} \gamma_j y_j\leq \beta_i y_i$ and $y_h=\bzero$ for all $h\notin I^r\cup J^r$,
hence it lies in the halfspace~\eqref{e:Ci-cond1}, and so the same holds for $x$.
Besides, the fact that $x\neq \bzero$ and~\eqref{e:Ci-cond1} imply that
$x_i\neq\bzero$ for some $i\in I^r$.
Finally, if $\gamma_{j} x_j = \mysup_{i\in I^r} \beta_i x_i$,
let $k \in I^r$ be such that $x_j=y^{k}_j$. Since $y^{k}\in \cC_{k}$,
by~\eqref{e:Ci} we have $\gamma_{j} y^{k}_j\leq \beta_{k} y^{k}_{k}$, and
it follows that $\gamma_{j} x_j = \gamma_{j} y^{k}_j\leq \beta_{k} y^{k}_{k} \leq \beta_{k} x_{k} \leq \mysup_{i\in I^r} \beta_i x_i$.
All these inequalities turn into equalities, so we have
$\gamma_{j} y^{k}_j= \beta_{k} y^{k}_{k}$ with $y^{k}\in\cC_{k}$,
and hence $j\in J^{\leq}_{k}$ by~\eqref{e:Ci}. This shows that the conditions of the lemma are also necessary.
\end{proof}

\begin{proposition}\label{p:Ci-hemi}
For each $r\in [p]$ the cone $\mysup_{i\in I^r} \cC_i$ is a conical
hemispace.
\end{proposition}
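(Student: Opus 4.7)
The plan is to case-split on whether $J^r$ is empty. If $J^r=\emptyset$, then by construction $J_i^<\cup J_i^{\leq}=\emptyset$ for every $i\in I^r$, and~\eqref{e:hemitriv} gives $\mysup_{i\in I^r}\cC_i=\{x\in\Rmax^n\mid x_j=\bzero\text{ for all }j\notin I^r\}$. This cone pairs with $\{\bzero\}\cup\{x\in\Rmax^n\mid x_i=\bzero\text{ for all }i\in I^r,\ x\neq\bzero\}$ to form a joined pair of conical hemispaces (both pieces are straightforwardly closed under $\oplus$ and scalar multiplication). Henceforth assume $J^r\neq\emptyset$, write $\cV_1:=\mysup_{i\in I^r}\cC_i$, and use the halfspace-with-tiebreaker description of its non-null elements furnished by Lemma~\ref{l:Ci-halfs}.

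The candidate complementary cone is tautological: $\cV_2:=\{\bzero\}\cup(\Rmax^n\setminus\cV_1)$, for which the disjointness $\cV_1\cap\cV_2=\{\bzero\}$ and the coverage $\cV_1\cup\cV_2=\Rmax^n$ are automatic. Closure of $\cV_2$ under multiplication by scalars is immediate from Lemma~\ref{lem:wedge}, since $\cV_1$ is a wedge. The substantive issue is closure of $\cV_2$ under the tropical sum $\oplus$, which I would settle by contradiction.

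Suppose $x,y\in\cV_2$ are both non-null while $z:=x\oplus y\in\cV_1\setminus\{\bzero\}$. Lemma~\ref{l:Ci-halfs} applied to $z$ forces $\supp(z)\subseteq I^r\cup J^r$, whence $\supp(x),\supp(y)\subseteq I^r\cup J^r$. Setting $T_1(w):=\mysup_{i\in I^r}\beta_iw_i$ and $T_2(w):=\mysup_{j\in J^r}\gamma_jw_j$, which are additive under $\oplus$, each of $x,y$ falls into exactly one of three mutually exclusive categories: (I) $T_1=\bzero$; (II) $T_1\neq\bzero$ and $T_2>T_1$; or (III) $T_1=T_2\neq\bzero$ with a tiebreaker failure at some $j^*\in J^r$. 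A short bookkeeping argument handles every combination except one: (I)-(I) forces $T_1(z)=\bzero$; any pair involving (II) either directly yields $T_2(z)>T_1(z)$ or produces an equality case that transfers the (II)-side violation onto a tiebreaker failure for $z$; the mixed (I)-(III) and the (III)-(III) pairs with $T_1(x)\neq T_1(y)$ transfer the tiebreaker failure of the side with the larger $T_1$ verbatim to $z$ (because on the other side the values $\beta_k w_k$ for $k\in I^r$ are all strictly below $T_1(z)$). All of these contradict $z\in\cV_1$.

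The main obstacle is the (III)-(III) case with $T_1(x)=T_1(y)$. Setting $K_x:=\{k\in I^r\mid\beta_kx_k=T_1(x)\}$, $K_y:=\{k\in I^r\mid\beta_ky_k=T_1(y)\}$ and $K_z:=\{k\in I^r\mid\beta_kz_k=T_1(z)\}$, one checks $K_z=K_x\cup K_y$. The tiebreaker failures of $x$ at $j_x^*$ and of $y$ at $j_y^*$ rule out any $k\in K_x$ (resp.\ $k\in K_y$) having $j_x^*\in J_k^{\leq}$ (resp.\ $j_y^*\in J_k^{\leq}$), so for the tiebreaker of $z$ to hold at both indices one would need $k_x\in K_y\setminus K_x$ with $j_x^*\in J_{k_x}^{\leq}$ and $k_y\in K_x\setminus K_y$ with $j_y^*\in J_{k_y}^{\leq}$. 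The chain property~\eqref{PropertyChain} applied to $k_x,k_y\in I^r$ gives $J_{k_x}^<\subseteq J_{k_y}^<$ or $J_{k_y}^<\subseteq J_{k_x}^<$: the first inclusion yields $j_y^*\in J_{k_x}^<\subseteq J_{k_y}^<$, contradicting $j_y^*\in J_{k_y}^{\leq}$; the second yields $j_x^*\in J_{k_y}^<\subseteq J_{k_x}^<$, contradicting $j_x^*\in J_{k_x}^{\leq}$. This completes the verification that $\cV_2$ is closed under $\oplus$, so $(\cV_1,\cV_2)$ is a joined pair of conical hemispaces and $\mysup_{i\in I^r}\cC_i$ is a conical hemispace.
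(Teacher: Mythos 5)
Your proof is correct in its essentials, but it takes a genuinely different route from the paper's. The paper constructs an explicit candidate complement: the ``reflection'' cone $\Tilde{\cC}$ obtained by swapping the roles of $I^r$ and $J^r$ and of $J_k^{\leq}$ and $J_k^{<}$ in~\eqref{e:Ci-cond1}--\eqref{e:Ci-cond2}; it then verifies that $\Tilde{\cC}$ is a cone, that $\complement\left(\mysup_{i\in I^r}\cC_i\right)\subseteq\Tilde{\cC}$, and that $\Tilde{\cC}\cap\left(\mysup_{i\in I^r}\cC_i\right)=\{\bzero\}$, this last step by an infinite-descent argument built on a total order of $I^r$ and $J^r$ coming from the nestedness of the sets $J_i^{<}$ (i.e.\ from~\eqref{PropertyChain}). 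You instead take the tautological complement and prove directly that it is closed under $\oplus$, by classifying the three ways a non-null vector supported on $I^r\cup J^r$ can fail the membership criterion of Lemma~\ref{l:Ci-halfs} and checking all combinations; the only substantive combination --- two tiebreaker failures at the same level --- is resolved by a single two-index application of~\eqref{PropertyChain}. Your route is shorter and avoids both the explicit reflection and the descent; the paper's route has the side benefit of yielding a concrete description of the complementary conical hemispace (cf.\ Remark~\ref{JustRemark}), which is of independent interest. I checked the hard (III)--(III) case with $T_1(x)=T_1(y)$ in detail and it is airtight: $K_z=K_x\cup K_y$, the required witnesses $k_x\in K_y\setminus K_x$ and $k_y\in K_x\setminus K_y$ satisfy $j_y^*\in J_{k_x}^{<}$ and $j_x^*\in J_{k_y}^{<}$ by the respective failures, and either inclusion in~\eqref{PropertyChain} gives a contradiction.

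Two small blemishes. First, in the degenerate case $J^r=\emptyset$ the complement of the cone in~\eqref{e:hemitriv} is $\left\{x\in\Rmax^n\mid x_j\neq\bzero\ \text{for some}\ j\notin I^r\right\}$, not the set $\left\{x\mid x_i=\bzero\ \text{for all}\ i\in I^r\right\}\setminus\{\bzero\}$ you name: your claimed pair fails to cover $\Rmax^n$ (e.g.\ a vector with non-null entries both inside and outside $I^r$ lies in neither set). The correct complement enlarged with $\bzero$ is still evidently a cone, so the conclusion stands. Second, in the pairs involving category (II), and in the (I)--(III) pair, what transfers to $z$ in the equality sub-case is the category-(III) side's tiebreaker failure (the premise $\gamma_{j^*}z_{j^*}=T_1(z)$ survives because the other summand contributes at most $T_2\leq T_1(z)$ at $j^*$, and $K_z$ reduces to the argmax set of the (III) side), not ``the (II)-side violation''; the conclusion $z\notin\mysup_{i\in I^r}\cC_i$ holds in every sub-case, but the mechanism should be stated that way.
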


\begin{proof}
The case when $J^r=\emptyset$ was treated in~\eqref{e:hemitriv}, so
we can assume $J^r\neq\emptyset$. We have shown that the non-trivial
elements of $\mysup_{i\in I^r} \cC_i$ are precisely the elements of
$\Rmax^n$ that satisfy~\eqref{e:Ci-cond1} and~\eqref{e:Ci-cond2}. In
the rest of the proof, we assume that the complement of $I^r\cup
J^r$ is empty, or equivalently, we will show that $\mysup_{i\in I^r}
\cC_i$ is a conical hemispace in the plane $\{x_i=\bzero\mid i\notin
I^r\cup J^r\}$, from which it follows that $\mysup_{i\in I^r} \cC_i$
is a conical hemispace in $\Rmax^n$. (For this, verify that the
complement of a cone lying in $\{x_i=\bzero\mid i\in \Tilde{I}\}$,
for $\Tilde{I}$ a subset of $[n]$, is a cone, if the restriction of
that complement to $\{x_i=\bzero\mid i\in \Tilde{I}\}$ is a cone.)
Thus, we assume $I^r\cup J^r=[n]$.

Let us build a ``reflection'' of $\mysup_{i\in I^r} \cC_i$,
swapping the roles of $I^r$ and $J^r$,
and the roles of $J_k^{\leq}$ and $J_k^<$
in~\eqref{e:Ci-cond1} and~\eqref{e:Ci-cond2}.
Namely, we define it as the set $\Tilde{\cC}$
containing $\bzero$ and all the vectors $x\in\Rmax^n$ that satisfy
\begin{equation}\label{e:Ci-cond1r}
\mysup_{i\in I^r} \beta_i x_i\leq \mysup_{j\in J^r} \gamma_j x_j
\end{equation}
and
\begin{equation}\label{e:Ci-cond2r}
\beta_ix_i=\mysup_{j\in J^r} \gamma_j x_j \implies
\exists k\in J^r \makebox{ such that } \gamma_k x_k=\beta_i x_i
\makebox{ and }  k\in J_i^<.
\end{equation}

We need to show that $\Tilde{\cC}$ is a cone.
Evidently, $x\in\Tilde{\cC}$ implies
$\lambda x\in\Tilde{\cC}$ for all $\lambda\in\Rmax$.
If $x,y\in\Tilde{\cC}\setminus \{\bzero \}$ and $z=x\oplus y$ satisfies~\eqref{e:Ci-cond1r}
with strict inequality, then $z\in\Tilde{\cC}$. If not,
let $i$ be such that $\beta_iz_i=\mysup_{j\in J^r} \gamma_j z_j$,
and assume $z_i=x_i$.
It follows that $\beta_ix_i=\mysup_{j\in J^r} \gamma_j x_j$,
and then there exists $k\in J^r$ such that
$\gamma_k x_k=\beta_i x_i$ and $k\in J_i^<$.
Further observe that
$\gamma_k z_k\geq \gamma_k x_k=\beta_ix_i=\beta_iz_i=\mysup_{j\in J^r} \gamma_j z_j\geq \gamma_k z_k$, and so $\gamma_k z_k=\beta_i z_i$,
showing that $z$ satisfies~\eqref{e:Ci-cond2r} and is in $\Tilde{\cC}$.

We now show that $\Tilde{C}\setminus \{\bzero \}$ is the complement
of $\mysup_{i\in I^r} \cC_i$, so $\Tilde{C}$ and $\mysup_{i\in I^r}
\cC_i$ form a joined pair of conical hemispaces. Building the
complement of $\mysup_{i\in I^r} \cC_i$ by
negating~\eqref{e:Ci-cond1} and~\eqref{e:Ci-cond2}, we see that it
consists of two branches: vectors $x$ satisfying
\[
\mysup_{i\in I^r} \beta_i x_i< \mysup_{j\in J^r} \gamma_j x_j,
\]
and those satisfying
$$\mysup_{i\in I^r} \beta_i x_i = \mysup_{j\in J^r} \gamma_j x_j $$
and
\[
\exists k\in J^r \makebox{ such that }
\gamma_k x_k = \mysup_{i\in I^r} \beta_i x_i,
\makebox{ and } k\in J_{h}^<
\makebox{ whenever }
\beta_{h}x_{h}=\gamma_k x_k.
\]
It can be verified that both branches belong to the ``reflection'' $\Tilde{\cC}$ as defined
by~\eqref{e:Ci-cond1r} and~\eqref{e:Ci-cond2r}.

We are now left to show that $\mysup_{i\in I^r} \cC_i$ and its ``reflection'' $\Tilde{\cC}$ do not contain
any common non-null vector. We will use~\eqref{PropertyChain}, i.e.,
the fact that for each $i_1,i_2\in I^r$ either
$J_{i_1}^<\subseteq J_{i_2}^<$ or $J_{i_2}^<\subseteq J_{i_1}^<$.
This property means that the sets $J_i^<$ and $J_i^{\leq}=J\bez J_i^{<}$ are nested, hence the elements of $I^r$ and $J^r$ can
be assumed to be ordered so that
\begin{equation*}
i_1\le i_2\Leftrightarrow J_{i_2}^{\leq}\subseteq J_{i_1}^{\leq}
\end{equation*}
and the following properties are satisfied:
\begin{equation}
\label{e:nest-order}
\begin{split}
j_1 \in J^{\leq}_{i_1}\; ,\;  j_2 \in J^{<}_{i_1}  &\implies  \; j_1 < j_2\; ,\\
j_1 \in J^{<}_{i_1}\; , \; j_1 \in J^{\leq}_{i_2} & \implies \; i_2< i_1\; .
\end{split}
\end{equation}

Assume now $x\in \left(\mysup_{i\in I^r}
\cC_i\right)\cap\Tilde{\cC}$ but $x\neq \bzero$. Then, we
necessarily have $\mysup_{i\in I^r} \beta_i x_i = \mysup_{j\in J^r}
\gamma_{j} x_j\neq \bzero$. Let $i_1\in I^r$ be such that
$\beta_{i_1} x_{i_1}=\mysup_{j\in J^r} \gamma_j x_j $. Since $x \in
\Tilde{\cC}$, there exists $j_1\in J_{i_1}^{<}$ such that
$\mysup_{j\in J^r} \gamma_{j} x_j=\gamma_{j_1} x_{j_1}$. As $x \in
\mysup_{i\in I^r} \cC_i$, there exists $i_2\in I^r$ such that
$\beta_{i_2} x_{i_2}=\mysup_{i\in I^r} \beta_i x_i=\gamma_{j_1}
x_{j_1} $ and $j_1\in J_{i_2}^{\leq}$, and so $i_2 < i_1$
by~\eqref{e:nest-order}. Again, using the fact that $x \in
\Tilde{\cC}$ and $\beta_{i_2} x_{i_2}=\mysup_{j\in J^r} \gamma_j
x_j$, we conclude that there exists $j_2\in J_{i_2}^{<}$ such that
$\mysup_{j\in J^r} \gamma_{j} x_j=\gamma_{j_2} x_{j_2}$, and so $j_1
< j_2$ by~\eqref{e:nest-order}. Repeating this argument again and
again we obtain infinite sequences $i_1>i_2>i_3>\ldots$ and
$j_1<j_2<j_3<\ldots$,  which is impossible. Hence, $\mysup_{i\in
I^r} \cC_i$ and $\Tilde{\cC}$ form a joined pair of conical
hemispaces.
\end{proof}

\begin{remark}\label{JustRemark}
It can be shown that $\Tilde{\cC}=\mysup_{j\in J^r} \Tilde{\cC}_j$,
where $\Tilde{\cC}_j$ are defined as the ``reflection'' of $\cC_i$,
i.e., cones whose non-null vectors satisfy
\[
\left\{
\begin{array}{l}
\beta_i x_i\leq \gamma_j x_j \makebox{ for all } i \makebox{ such that }
 j\in J_i^{<}\\
\beta_i x_i< \gamma_j x_j 
\makebox{ for all }  i \makebox{ such that }  j\in J_i^{\leq} \\
x_i=\bzero \makebox{ for all }  i\in J^r\setminus \{j\}
\end{array}
\right.
\]
The proof of $\Tilde{\cC}=\mysup_{j\in J^r} \Tilde{\cC}_j$ is based on the
arguments of Lemmas~\ref{l:Ci} and~\ref{l:Ci-halfs}.
As this observation is just a remark, we will not provide a proof.
\end{remark}

\begin{proof}[Proof of the ``if'' part of Theorem~\ref{t:Ghemi}]

Let $\cC_i\subset \Rmax^n$, for $i\in I$,
be defined by~\eqref{def:CiDi} (see also~\eqref{e:Ci},
a working equivalent definition,
and~Lemma~\ref{l:Ci-halfs} for an equivalent definition of
$\mysup_{i\in I^r} \cC_i$).
Let the operator $x\mapsto \hat{x}$
be defined as in Theorem~\ref{t:hatx}.

Let $x\in \complement \cV$ (which in particular means $x\neq \bzero$) and $\lambda \in \Rinv$.
If $x_i = \bzero$ for all $i\in I$, then $\lambda x \in \complement \cV$ is immediate by
Remark~\ref{RemarkNullVectorOnly} because $x\neq \bzero$.
If $x_i\neq \bzero$ for some $i\in I$, let
$h:=\min \{r\in [p]\mid x_t \neq \bzero \makebox{ for some } t \in I^r \}$.
Then, $\hat{x}\not \in \mysup_{i\in I^h} \cC_i$ by
Theorem~\ref{t:hatx} because $x\in \complement \cV$.
Note that for $y:=\lambda x$ we have
$\min \{r\in [p]\mid y_t \neq \bzero \makebox{ for some } t \in I^r \}=h$
and $\hat{y} =\lambda \hat{x}$. By Theorem~\ref{t:hatx}
it follows that $y\in \complement \cV$
because $\hat{y} =\lambda \hat{x} \not \in \mysup_{i\in I^h} \cC_i$.

Let now $x,y \in \complement \cV$
(which in particular means $x\neq \bzero$ and $y\neq \bzero$)
and define $z:=x\oplus y$.

Assume first that $x_i=y_i=  \bzero$ for all $i\in I$.
Then, $z_i=\bzero$ for all $i\in I$, and as $z\neq \bzero$,
we conclude $z\in \complement \cV$ by Remark~\ref{RemarkNullVectorOnly}.

In the second place, assume $x_i\neq \bzero$ for some $i\in I$
but $y_t = \bzero$ for all $t \in I$. Then,
note that $\hat{z}=\hat{x}\oplus w$ for some vector $w$
which satisfies $\supp (w)\cap I=\emptyset$.
Let $h:=\min \{r\in [p]\mid x_t \neq \bzero \makebox{ for some } t \in I^r \}$,
so $\hat{x} \not \in \mysup_{i\in I^h} \cC_i$ by Theorem~\ref{t:hatx}.
Since $\hat{z}=\hat{x}\oplus w$ and $\supp (w)\cap I^h=\emptyset$,
from Lemma~\ref{l:Ci-halfs} it follows that
$\hat{z}\not \in \mysup_{i\in I^h} \cC_i$,
and so $z\in  \complement \cV$ by Theorem~\ref{t:hatx}.

Finally, assume $x_i\neq \bzero$ and $y_t \neq \bzero$ for some $i,t
\in I$. Let $h:=\min \{r\in [p]\mid x_t\neq \bzero \makebox{ for
some } t \in I^r \}$ and $k:=\min \{r\in [p]\mid y_t \neq \bzero
\makebox{ for some } t \in I^r \}$. We first consider the case $h
\neq k$, and so without loss of generality we may assume $h < k$.
Then, as above, we conclude that $z \in \complement \cV$ because
$\hat{z}=\hat{x}\oplus w$ for some vector $w$ satisfying $\supp
(w)\cap I^h=\emptyset$. Suppose now $h=k$. Then, $\min \{r\in
[p]\mid z_t \neq \bzero \makebox{ for some } t \in I^r \}=h$ and
$\hat{z}=\hat{x}\oplus \hat{y}$. From $\hat{x}\not \in \mysup_{i\in
I^h} \cC_i$ and $\hat{y}\not \in \mysup_{i\in I^h} \cC_i$, it
follows that $\hat{z}\not \in \mysup_{i\in I^h} \cC_i$, because
$\mysup_{i\in I^h} \cC_i$ is a conical hemispace by
Proposition~\ref{p:Ci-hemi}. Thus, again by Theorem~\ref{t:hatx}, we
have $z \in \complement \cV$.
\end{proof}

\begin{example}
Let us consider the cone
\[
\cV =\cone \left(\left\{\uvector{1}\right\} \cup \left\{\uvector{1}\oplus \uvector{3}\right\} \cup \left\{\uvector{1}\oplus \delta \uvector{4}\mid \delta \in \Rmax \right\} \cup \left\{\uvector{2}\right\} \cup \left\{ \uvector{2}\oplus \uvector{4}\right\}\right) \subseteq \Rmax^4 \; .
\]
Note the $\cV$ can be written in the form~\eqref{DefHemiWithGen} defining $I:=\{1,2\}$, $J:=\{3,4\}$,
$\sigma^{(-)}_{13}:=\{\lambda \mid \lambda \leq \bunity\}$, $\sigma^{(-)}_{14}:= \Rmax$, $\sigma^{(-)}_{23}:=\{\bzero \}$ and $\sigma^{(-)}_{24}:=\{\lambda \mid \lambda \leq \bunity\}$. Since the rank-one condition~\eqref{e:rankone} is satisfied with $\sigma^{(+)}_{13}:=\Rmax \cup \{+\infty \}\setminus \sigma^{(-)}_{13} = \{\lambda \mid \lambda > \bunity\}$, $\sigma^{(+)}_{14}:=\Rmax \cup \{+\infty \}\setminus \sigma^{(-)}_{14}= \{+\infty \}$, $\sigma^{(+)}_{23}:=\Rmax \cup \{+\infty \}\setminus\sigma^{(-)}_{23}=\Rinv \cup \{+\infty \}$ and $\sigma^{(+)}_{24}:=\Rmax \cup \{+\infty \}\setminus \sigma^{(-)}_{24}=\{\lambda \mid \lambda > \bunity\}$, by Theorem~\ref{t:Ghemi} we know that $\cV$ is a conical hemispace. Then, by
Proposition~\ref{p:complement} we also know that $\cV_1:=\cV$ and
\[
\cV_2:=\cone \left( \left\{\uvector{3}\right\} \cup \left\{\uvector{3}\oplus \alpha \uvector{1}\mid \alpha<\bunity \right\} \cup \left\{\uvector{3}\oplus \beta \uvector{2}\mid \beta \in \Rmax \right\} \cup \left\{\uvector{4}\right\} \cup\left\{\uvector{4}\oplus \gamma \uvector{2} \mid \gamma< \bunity \right\} \right)
\]
form a joined pair of conical hemispaces. Let us verify that this holds.

We first show that $\cV_1\cap \cV_2=\{\bzero\}$.
Assume $x\in \cV_1\cap \cV_2$. Note that we can always
express $x$ as a linear combination of the generators of $\cV_1$
containing at most one vector of the form
$\uvector{1} \oplus \delta \uvector{4}$.
The same observation holds for the generators of $\cV_2$
and vectors of the form
$\uvector{3} \oplus \alpha \uvector{1}$,
$\uvector{3} \oplus \beta \uvector{2}$ and
$\uvector{4} \oplus \gamma \uvector{2}$. Thus, we have
\[
x = \mu_1 \uvector{1}\oplus \mu_2(\uvector{1}\oplus \uvector{3})\oplus \mu_3(\uvector{1}\oplus \delta \uvector{4})\oplus \mu_4\uvector{2}\oplus \mu_5(\uvector{2}\oplus \uvector{4})
\]
for some $\mu_1,\mu_2,\mu_3,\mu_4,\mu_5\in \Rmax$ since $x\in \cV_1$, and
\[
x = \nu_1 \uvector{3}\oplus \nu_2(\uvector{3}\oplus\alpha \uvector{1})\oplus \nu_3(\uvector{3}\oplus \beta \uvector{2})\oplus \nu_4\uvector{4}\oplus \nu_5 (\uvector{4}\oplus \gamma \uvector{2})
\]
for some $\nu_1,\nu_2,\nu_3,\nu_4,\nu_5\in \Rmax$ since $x\in \cV_2$.

Writing the equality on components in these expressions gives:
\begin{equation}\label{eq:comp}
\begin{aligned}
\mu_1 \oplus\mu_2 \oplus\mu_3 &=\alpha \nu_2, \\
\mu_4\oplus \mu_5 &= \nu_3 \beta\oplus \nu_5\gamma,\\
\mu_2&=\nu_1 \oplus \nu_2 \oplus \nu_3,\\
\mu_3 \delta \oplus \mu_5 &=\nu_4\oplus \nu_5.
\end{aligned}
\end{equation}

From the first and third equalities in \eqref{eq:comp} it follows that
$$
\mu_2\le \mu_1\oplus \mu_2 \oplus \mu_3 =\alpha \nu_2\le \alpha (\nu_1 \oplus \nu_2 \oplus \nu_3)=\alpha \mu_2,
$$
which, due to $\alpha <\bunity $, implies $\mu_1=\mu_2=\mu_3=\nu_1=\nu_2=\nu_3=\bzero$.
Then, from the second and fourth equalities in \eqref{eq:comp} it follows that
$$
\mu_5 \leq \mu_4\oplus\mu_5 =\nu_5 \gamma\le (\nu_4\oplus\nu_5)\gamma =\mu_5 \gamma,
$$
which, due to $\gamma <\bunity $, implies $\mu_4=\mu_5=\nu_4=\nu_5=\bzero$.

To show that $\cV_1\cup \cV_2=\Rmax^4$, let $x\in \Rmax^4$.
It is convenient to consider different cases.

If $x_1=x_3=\bzero$, we have $x=x_4(\uvector{2}\oplus \uvector{4}) \oplus x_2 \uvector{2}\in \cV_1$ when $x_2\geq x_4$, and defining $\gamma =x_4^{-1} x_2$
we have $x= x_4 (\uvector{4}\oplus \gamma \uvector{2})\in \cV_2$
when $x_2< x_4$.

When $x_1=\bzero$ and $x_3\neq \bzero$, defining $\beta =x_3^{-1} x_2$ we have
$x= x_4 \uvector{4} \oplus x_3 (\uvector{3}\oplus \beta \uvector{2})\in \cV_2$.

When $x_1\neq \bzero$ and $x_3=\bzero$, defining $\delta =x_1^{-1} x_4$ we have
$x= x_2 \uvector{2} \oplus x_1 (\uvector{1}\oplus \delta \uvector{4})\in \cV_1$.

If $x_1\neq \bzero$ and $x_3\neq \bzero$, defining
$\delta =x_1^{-1} x_4$ we have $x=x_1 \uvector{1} \oplus x_2 \uvector{2} \oplus x_3(\uvector{1}\oplus \uvector{3})\oplus  x_1 (\uvector{1}\oplus \delta \uvector{4})\in \cV_1$ when $x_1\geq x_3$, and defining $\beta =x_3^{-1} x_2$ and $\alpha =x_3^{-1}x_1$ we have
$x=x_3 \uvector{3} \oplus x_4 \uvector{4} \oplus x_3 (\uvector{3}\oplus
\beta \uvector{2}) \oplus x_3 (\uvector{3} \oplus \alpha \uvector{1})\in \cV_2$ when $x_1 < x_3$.
\end{example}

\subsection{Closed hemispaces and closed halfspaces}
\label{ss:final}

We now consider the case of closed conical hemispaces, and show that
these are precisely the closed homogeneous halfspaces, i.e., cones
of the form
\begin{equation}\label{e:halfs}
\left\{x\in \Rmax^n \mid \mysup_{j\in J}\gamma_j x_j\leq\mysup_{i\in I} \beta_i x_i\;\makebox{and}\;  x_i=\bzero\;\makebox{for all}\; i\in L \right\},
\end{equation}
where $I$, $J$ and $L$ (with $I$ and $J$, or $L$, possibly empty)
are pairwise disjoint subsets of $[n]$.

\begin{theorem}[Briec and Horvath~\cite{BH-08}]\label{t:hemishalfs}
Closed conical hemispaces $=$ closed homogeneous halfspaces.
\end{theorem}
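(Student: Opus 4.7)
The plan is to prove both inclusions separately, the reverse inclusion being the substantive one.

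For the forward inclusion, any set $H$ of the form~\eqref{e:halfs} is closed (being defined by non-strict inequalities and equalities) and is visibly a cone. Its complement enlarged with $\bzero$ is also a cone by a short case analysis: if $x,y\in\complement H\cup\{\bzero\}$ are both non-null, then either one already has $x_k\neq\bzero$ for some $k\in L$ (whence $(x\oplus y)_k\neq\bzero$ as well), or both strictly violate the halfspace inequality, in which case the elementary fact that the max of two strictly greater quantities is strictly greater than the max of the smaller ones yields the same violation for $x\oplus y$. Hence $H$ is a closed conical hemispace.

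For the reverse inclusion, let $\cV$ be a closed conical hemispace. The trivial cases $\cV=\{\bzero\}$ and $\cV=\Rmax^n$ correspond in~\eqref{e:halfs} to $(I,J,L)=(\emptyset,\emptyset,[n])$ and $(\emptyset,\emptyset,\emptyset)$ respectively. For non-trivial $\cV$, Theorem~\ref{t:Ghemi} furnishes the representation~\eqref{DefHemiWithGen}. The key step is to exploit closedness to force $J_i^{<}=J_i^{\infty}=\emptyset$ for every $i\in I$. If $\sigma_{ij}\in\Rinv$ and $\sigma_{ij}^{(-)}=\{\lambda<\sigma_{ij}\}$, letting $\lambda\nearrow\sigma_{ij}$ in $\uvector{i}\oplus\lambda\uvector{j}\in\cV$ gives $\uvector{i}\oplus\sigma_{ij}\uvector{j}\in\cV$ by closedness, contradicting $\sigma_{ij}\in\sigma_{ij}^{(+)}$. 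If $\sigma_{ij}=+\infty$, then $\lambda^{-1}(\uvector{i}\oplus\lambda\uvector{j})\in\cV$ for every $\lambda\in\Rinv$ (since $\cV$ is a cone), and this sequence converges to $\uvector{j}$ as $\lambda\to+\infty$, forcing $\uvector{j}\in\cV$ by closedness and contradicting $j\in J$.

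With $K^r=J_i^{\infty}=\emptyset$ for all $r$, the inclusion~\eqref{PropertyJrcupKrsubsetKr-1} forces $J^r=\emptyset$ for every $r\geq 2$, so only the first equivalence class $I^1$ contributes non-trivial generators of the form $\uvector{i}\oplus\sigma_{ij}\uvector{j}$; every other class supplies only the unit vectors $\uvector{i}$ with $i\in I^r$. Proposition~\ref{p:RD} part~\eqref{p:RD:P6} then yields $\beta_i,\gamma_j\in\Rinv$ with $\sigma_{ij}=\gamma_j^{-1}\beta_i$ for $i\in I^1$ and $j\in J^1$, and I would identify $\cV$ with the halfspace
\[
H=\left\{x\in\Rmax^n \mid \mysup_{j\in J^1}\gamma_jx_j\leq\mysup_{i\in I^1}\beta_ix_i \text{ and } x_k=\bzero \text{ for all }k\in L^1\right\}.
\]
The inclusion $\cV\subseteq H$ is verified generator by generator. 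For the reverse inclusion, given $x\in H$, the restriction $\mysup_{k\in I^1\cup J^1}x_k\uvector{k}$ satisfies the hypotheses of Lemma~\ref{l:Ci-halfs} (condition~\eqref{e:Ci-cond2} is vacuous since $J_i^<=\emptyset$), so it belongs to $\mysup_{i\in I^1}\cC_i\subseteq\cV$, while $\mysup_{i\in I\setminus I^1}x_i\uvector{i}$ lies in $\cV$ because each such $\uvector{i}$ is a generator of $\cV$. I expect the main obstacle to be the analysis of multiple equivalence classes: a priori $p\geq 2$ would suggest that $\cV$ is an intersection of several halfspaces, and it is the combinatorial consequence $J^r=\emptyset$ for $r\geq 2$ that collapses this picture to a single halfspace with the coordinates indexed by $I\setminus I^1$ left unconstrained.
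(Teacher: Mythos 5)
Your proof is correct and follows essentially the same route as the paper: both directions rest on the representation of Theorem~\ref{t:Ghemi}, the observation that closedness forces $J_i^{<}=J_i^{\infty}=\emptyset$ (hence $K^r=\emptyset$ and, via~\eqref{PropertyJrcupKrsubsetKr-1}, $J^r=\emptyset$ for $r\geq 2$), and Lemma~\ref{l:Ci-halfs} to identify $\mysup_{i\in I^1}\cC_i$ with a homogeneous halfspace. The only differences are cosmetic: you spell out the limiting arguments that the paper merely asserts, and you treat the equivalence classes uniformly where the paper splits into the cases $p\geq 2$, $p=1$ with $J^1\neq\emptyset$, and $p=1$ with $J^1=\emptyset$.
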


\begin{proof}
Closed homogeneous halfspaces are closed conical hemispaces, since
the complement of~\eqref{e:halfs} is given by
\[
\left\{x\in \Rmax^n \mid \mysup_{j\in J}\gamma_j x_j > \mysup_{i\in I} \beta_i x_i\;\makebox{or}\;  x_i\neq \bzero\;\makebox{for some}\; i\in L \right\},
\]
and adding the null vector $\bzero$ to this complement we get a cone.

Conversely, if a conical hemispace $\cV$ is closed, then
in~\eqref{DefHemiWithGen} we have $\sigma_{ij} \in \Rmax$ for all
$i\in I$ and $j\in J$, and the sets $\sigma_{ij}^{(-)}$ can only be of the form
\[
\sigma_{ij}^{(-)}=
\begin{cases}
\{\lambda \mid \lambda \leq \sigma_{ij}\} &
\text{if } \sigma_{ij} \in \Rinv \; , \\
\{\sigma_{ij}\} & \text{if } \sigma_{ij} = \bzero \; .
\end{cases}
\]
Equivalently, the sets $J_i^<$ and $J_i^{\infty}$ of
Proposition~\ref{p:RD} are empty for all $i\in I$, and so
$K^r=\emptyset$ for $r\in [p]$. Observe that this means that $L^r=J$
if $J^r=\emptyset$, which in turn implies $p=r$. Moreover, we also
have $\cV=\mysup_{i\in I} (\cC_i\oplus \cD_i)= \mysup_{i\in I}
\cC_i$ if $\cV$ is a closed conical hemispace, since
$J_i^{\infty}=\emptyset$ implies $\cD_i \subseteq \cC_i$.

Assume first that $p\geq 2$, which implies $J^1\neq \emptyset $
as mentioned above. Then, we have
$J^2\cup K^2 \subseteq K^1=\emptyset$ by~\eqref{PropertyJrcupKrsubsetKr-1}.
It follows that $J^2=\emptyset$, and so $p=2$.
Thus, we have $I=I^1\cup I^2$ and
$\cV=\mysup_{i\in I^1\cup I^2} \cC_i$.
By Lemma~\ref{l:Ci-halfs},
the cone $\bigoplus_{i\in I^1}\cC_i$ can be represented by
\begin{equation}\label{e:halfs-emerge}
\mysup_{j\in J^1} \gamma_j x_j\leq\mysup_{i\in I^1} \beta_i x_i \makebox{ and } x_j=\bzero \makebox{ for } j\in L^1\cup I^2.
\end{equation}
Note that this is just condition~\eqref{e:Ci-cond1},
and condition~\eqref{e:Ci-cond2} is always satisfied as
$J_{k}^{\leq}=J^1$ for all $k \in I^1$. Since $J^2=\emptyset$,
it follows that $\mysup_{i\in I^2} \cC_i$ is generated by
$\{\uvector{i}\mid i\in I^2\}$, and then~\eqref{e:halfs-emerge}
implies that $\cV=\mysup_{i\in I^1\cup I^2} \cC_i$
is the set of all vectors satisfying
\begin{equation}\label{e:closed-hemi}
\mysup_{j\in J^1} \gamma_j x_j\leq\mysup_{i\in I^1} \beta_i x_i\makebox{ and } x_j=\bzero\;\makebox{for}\; j\in L^1,
\end{equation}
which is a closed homogeneous halfspace.
Note that by Lemma~\ref{l:Ci-halfs} we arrive at the same
conclusion if we assume that $p=1$ and $J^1\neq \emptyset$.

Finally, if we assume that $p=1$ and $J^1= \emptyset$, then
$\cV=\mysup_{i\in I^1} \cC_i$ is generated by $\{\uvector{i}\mid i\in I^1=I\}$,
i.e., $\cV=\{x\in \Rmax^n\mid x_j=\bzero \makebox{ for } j\in J\}$
is a closed homogeneous halfspace.
\end{proof}

We now recall an important observation of~\cite{BH-08},
which will allow us to easily extend the result
of Theorem~\ref{t:hemishalfs} to general hemispaces.
For the reader's convenience, we give an elementary proof
based on (tropical) segments and their perturbations.

\begin{lemma}[Briec and Horvath~\cite{BH-08}]\label{LemmaBandH}
Closures of hemispaces $=$ closed hemispaces.
\end{lemma}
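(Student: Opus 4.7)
The non-trivial direction is showing that if $\cH$ is a hemispace with complementary convex set $\cH^c := \Rmax^n \setminus \cH$, then $\overline{\cH}$ is a hemispace; the reverse inclusion is immediate since any closed hemispace equals its own closure. The closure $\overline{\cH}$ is itself convex because the tropical convex combination $(x,y)\mapsto \alpha x \oplus \beta y$ (with $\alpha \oplus \beta = \bunity$ fixed) is jointly continuous in the relevant topology, so it carries limits of sequences in $\cH$ to limits in $\overline{\cH}$. The real content of the lemma is therefore that the complement $\Rmax^n \setminus \overline{\cH}$, which is the topological interior of $\cH^c$, is also convex.

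The key tool I would establish is a perturbation lemma: for any $u, v \in \Rmax^n$, scalars $\alpha,\beta \in \Rmax$ with $\alpha \oplus \beta = \bunity$, and any neighborhoods $U$ of $u$ and $V$ of $v$, the image set $\{\alpha u' \oplus \beta v' : u' \in U,\; v' \in V\}$ contains a neighborhood of $z := \alpha u \oplus \beta v$. Granted this, convexity of $\Rmax^n \setminus \overline{\cH}$ is quick: given $u, v \in \Rmax^n \setminus \overline{\cH}$, choose open $U \ni u$ and $V \ni v$ contained in $\Rmax^n \setminus \overline{\cH} \subseteq \cH^c$, and for any $\alpha \oplus \beta = \bunity$ set $z := \alpha u \oplus \beta v$. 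The perturbation lemma supplies an open neighborhood $W$ of $z$ whose points are all tropical convex combinations of elements of $U$ and $V$; convexity of $\cH^c$ forces $W \subseteq \cH^c$, and since $W$ is open we get $W \subseteq \Rmax^n \setminus \overline{\cH}$, hence $z \notin \overline{\cH}$, as required.

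I would prove the perturbation lemma coordinate by coordinate. Because $\alpha \oplus \beta = \bunity$, at least one of $\alpha, \beta$ lies in $\Rinv$. For each index $i$, the value $z_i = \alpha u_i \oplus \beta v_i$ splits into three cases according to whether $\alpha u_i > \beta v_i$, $\alpha u_i < \beta v_i$, or $\alpha u_i = \beta v_i$; a small prescribed perturbation of $z_i$ is realized by adjusting only $u_i$, only $v_i$, or either coordinate respectively, with the other kept within tolerance so that the tropical maximum is preserved. The main obstacle is the tie case combined with the possibility that the common value equals $\bzero$: one must verify that perturbations in both directions can be produced while respecting the neighborhood structure at $\bzero$, which is governed by the $d_{\infty}$ metric in the $\Rmaxplus$ model or the Euclidean metric in the $\Rmaxtimes$ model. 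Assembling these per-coordinate adjustments produces $u' \in U$ and $v' \in V$ with $\alpha u' \oplus \beta v' = z'$ for every $z'$ sufficiently close to $z$, completing the proof.
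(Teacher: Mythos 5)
Your proposal is correct and follows essentially the same route as the paper: reduce the statement to convexity of the (open) complement of the closure, and establish it via a coordinate-wise perturbation argument showing that every point sufficiently close to $z=\alpha u\oplus\beta v$ is itself a tropical convex combination of points close to $u$ and close to $v$, which is exactly the content of the paper's case analysis on whether $\mu y_i$ exceeds, equals, or is below $x_i$. The paper carries out the per-coordinate verification explicitly (in the max-times model) where you leave it sketched, but the key lemma and the deduction from it are the same.
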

\begin{proof}[Proof (in the max-times setting, with usual arithmetics)]
Consider the closure of a hemispace $\cH$ in $\Rmaxtimes^n$.
Since the closure of a convex set is a closed convex set
(see e.g.~\cite{GK-07,BSS}), we only need to show that the complement
of this closure is also convex.
This complement is open, so it consists of all points
$x\in\complement\cH$ for which there exists an open ``ball''
$B_x^{\epsilon }:=\{u\in \Rmaxtimes^n\mid |u_i-x_i|< \epsilon \makebox{ for all } i\in [n]\}$
such that $B_x^{\epsilon }\subseteq\complement\cH$.
We need to show that if $x$ and $y$ have this property,
then any linear combination $z=\lambda x\oplus \mu y$ with
$\lambda\oplus\mu=1$ also does. If we assume $\lambda=1$, then
\[
z_i=
\begin{cases}
\mu y_i, & \text {if }\mu y_i>x_i,\\
x_i, & \text{if }\mu y_i\leq x_i.
\end{cases}
\]
Let us consider $\hat{z}\in \Rmaxtimes^n$
defined by $\hat{z}_i:=z_i+\epsilon_i$,
where $\epsilon_i'$ are such that $|\epsilon_i| \leq \epsilon $
for all $i\in [n]$. We can write
\[
\hat{z}_i=
\begin{cases}
\mu y_i +\epsilon_i, &\text{if }\mu y_i+\epsilon_i>x_i\text{ and }x_i<\mu y_i ,\\
\mu y_i+\epsilon_i = x_i+\epsilon'_i, &\text{if }\mu y_i+\epsilon_i\leq x_i<\mu y_i,\\
x_i+\epsilon_i, &\text{if } \mu y_i\leq x_i+\epsilon_i\text{ and }\mu y_i\leq x_i,\\
x_i+\epsilon_i=\mu y_i+\epsilon'_i, & \text{if } x_i+\epsilon_i<\mu y_i\leq x_i,
\end{cases}
\]
where always $|\epsilon'_i|\leq|\epsilon_i|\leq \epsilon $. Thus, defining
\[
\begin{cases}
\hat{y}_i:=y_i +\mu^{-1} \epsilon_i\text{ and } \hat{x}_i:=x_i, &\text{if }\mu y_i+\epsilon_i>x_i\text{ and }x_i<\mu y_i ,\\
\hat{y}_i:=y_i +\mu^{-1} \epsilon_i\text{ and }
\hat{x}_i:=x_i+\epsilon'_i, &\text{if } \mu y_i+\epsilon_i\leq x_i<\mu y_i,\\
\hat{y}_i:=y_i \text{ and } \hat{x}_i:=x_i+\epsilon_i, &\text{if }\mu y_i\leq x_i+\epsilon_i\text{ and }\mu y_i\leq x_i,\\
\hat{y}_i:=y_i +\mu^{-1} \epsilon'_i\text{ and } \hat{x}_i:=x_i+\epsilon_i, &\text{if } x_i+\epsilon_i<\mu y_i\leq x_i,
\end{cases}
\]
we have $\hat{z}=\mu \hat{y}\oplus \hat{x}$, $\hat{x}\in B_x^\epsilon $ and
$\hat{y}\in B_y^{\epsilon''}$, where $\epsilon'':=\mu^{-1} \epsilon $.
Since $\complement\cH$ is convex, it follows that
$B_z^\epsilon \subseteq\complement\cH$ if
$B_x^{\epsilon''}\subseteq\complement\cH$
and $B_y^{\epsilon''}\subseteq\complement\cH$, proving the claim.
\end{proof}

\begin{corollary}[Briec and Horvath~\cite{BH-08}]\label{c:hemishalfs}
Closed hemispaces $=$ closed halfspaces.
\end{corollary}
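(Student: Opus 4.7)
The plan is to prove the two inclusions. First, that a closed halfspace in $\Rmax^n$ is a closed hemispace: by Joswig~\cite{Jos-05} a closed halfspace is a union of finitely many closed sectors, hence tropically convex, and a parallel direct verification (equivalently, observing that its complement is a union of ``open sectors'' defined by the opposite strict inequality) shows that its complement is also tropically convex, so the halfspace is a closed hemispace.

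The harder direction is that every closed hemispace in $\Rmax^n$ is a closed halfspace. Let $\cH \subseteq \Rmax^n$ be a closed hemispace, so that $(\cH, \complement\cH)$ is a complementary pair of hemispaces. By Theorem~\ref{t:hemihomog} there exist cones $\cV_1, \cV_2 \subseteq \Rmax^{n+1}$ forming a joined pair of conical hemispaces with $\cH = C^{\bunity}_{\cV_1}$. The cone $\cV_1$ need not be closed, and the crucial step is to replace it by a closed conical hemispace. I claim that the topological closure $\overline{\cV_1}$ is itself a closed conical hemispace: indeed, by continuity of tropical addition and scalar multiplication, $\overline{\cV_1}$ is again a cone; by Proposition~\ref{p:conhemi}, $\cV_1$ is in particular a hemispace in $\Rmax^{n+1}$, so by Lemma~\ref{LemmaBandH} its closure $\overline{\cV_1}$ is a closed hemispace; applying Proposition~\ref{p:conhemi} a second time yields that $\overline{\cV_1}$ is a closed conical hemispace. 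This transfer of closure from hemispaces to conical hemispaces, essentially for free via Lemma~\ref{LemmaBandH} combined with Proposition~\ref{p:conhemi}, is the main conceptual point of the proof; the rest is routine.

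It remains to verify that $C^{\bunity}_{\overline{\cV_1}} = \cH$ and then invoke Theorem~\ref{t:hemishalfs}. The inclusion $\cH = C^{\bunity}_{\cV_1} \subseteq C^{\bunity}_{\overline{\cV_1}}$ is immediate; for the reverse, any $(x, \bunity) \in \overline{\cV_1}$ is the limit of a sequence $(x^k, \alpha^k) \in \cV_1$, and since $\alpha^k \to \bunity \neq \bzero$ we have $\alpha^k \neq \bzero$ for $k$ large, so $((\alpha^k)^{-1} x^k, \bunity) \in \cV_1$ and hence $(\alpha^k)^{-1} x^k \in \cH$; passing to the limit gives $x \in \overline{\cH} = \cH$. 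By Theorem~\ref{t:hemishalfs}, $\overline{\cV_1}$ has the form~\eqref{e:halfs} in $\Rmax^{n+1}$, and intersecting with the coordinate plane $x_{n+1} = \bunity$ turns the coefficients of $x_{n+1}$ into the constant terms of the defining inequality and translates the constraints $x_k = \bzero$ from $k \in L \cap [n]$ (while the case $n+1 \in L$ is ruled out by $\cH$ being non-empty, which we may assume). Thus $\cH$ is a closed halfspace in $\Rmax^n$, completing the proof.
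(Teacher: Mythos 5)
Your proof is correct and follows essentially the same route as the paper: homogenize via Theorem~\ref{t:hemihomog}, pass to the closure $\overline{\cV_1}$ and show it is a closed conical hemispace using Lemma~\ref{LemmaBandH} (your explicit appeal to Proposition~\ref{p:conhemi} just spells out what the paper does implicitly), verify $C^{\bunity}_{\overline{\cV_1}}=\cH$ by the same sequence argument, and conclude with Theorem~\ref{t:hemishalfs} and dehomogenization.
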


\begin{proof}
We need to consider the case of a closed halfspace
that is not necessarily homogeneous,
and of a closed hemispace.
A general closed halfspace is a set of the form
\begin{equation}\label{e:halfs1}
\left\{ x\in\Rmax^n \mid \mysup_{j\in J} \gamma_jx_j\oplus \alpha\leq \mysup_{i\in I} \beta_ix_i\oplus\delta \makebox{ and }
x_j=\bzero \makebox{ for } j\in L \right\},
\end{equation}
where $I$, $J$ and $L$ are pairwise disjoint subsets of $[n]$. As in
the case of conical hemispaces, it can be argued that the complement
is convex too, so~\eqref{e:halfs1} describes a hemispace.

Conversely, by Theorem~\ref{t:hemihomog}, for a general hemispace
$\cH\subseteq\Rmax^n$ there exists a conical hemispace $\cV\subseteq
\Rmax^{n+1}$ such that $\cH=C_{\cV}^{\bunity}$. Even if $\cH$ is closed,
$\cV$ may be not closed in general. However, if
$\overline{\cV}$ is the closure of $\cV$, then the section
$C_{\overline{\cV}}^{\bunity}$ still coincides with $\cH$. Indeed,
for any $z=(x,\bunity)\in \overline{\cV}$ there exists a sequence
$\{z^k\}_{k\in \N}$ of vectors of $\cV$ such that $\lim_k z^k = z$.
Since $z_{n+1}=\bunity$  and, by Proposition~\ref{p:section},
$C_{\cV}^{\alpha}=\{\alpha x\mid x\in\cH\}$ for any non-null
$\alpha$,
we can assume that $z^k=(\lambda_k
x^k,\lambda_k)$ for some $\lambda_k\in \Rmax $ and $x^k\in \cH$. It
follows that $\lim_k \lambda_k =\bunity$ and $\lim_k x^k = x$. Thus,
$x\in \cH$ because $\cH$ is closed. Therefore, we conclude that
$C_{\overline{\cV}}^{\bunity}=C_{\cV}^{\bunity}=\cH$.

By Lemma~\ref{LemmaBandH} it follows that $\complement
\overline{\cV}$ is convex, and so $\complement \overline{\cV}\cup
\{\bzero \}$ and $\overline{\cV}$ form a joined pair of conical
hemispaces. Then, by Theorem~\ref{t:hemishalfs}, $\overline{\cV}$
can be expressed as a solution set to
\[
\mysup_{j\in J} \gamma_jx_j\oplus \alpha x_{n+1}\leq \mysup_{i\in I} \beta_ix_i\oplus\delta x_{n+1} \makebox{ and }
x_j=\bzero \makebox{ for } j\in L,
\]
for some disjoint subsets $I$, $J$ and $L$ of $[n]$.
The original hemispace in $\Rmax^n$ appears as a section of this closed
homogeneous halfspace by $x_{n+1}=\bunity$, and so it is of the
form~\eqref{e:halfs1}.
\end{proof}

\begin{corollary}\label{c:openhemishalfs}
Open hemispaces $=$ open halfspaces.
\end{corollary}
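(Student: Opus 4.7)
The plan is to derive this as a direct dual of Corollary~\ref{c:hemishalfs} by complementation, using only the elementary topological facts that (i) the complement of an open set is closed, and vice versa, and (ii) both sides of a complementary pair of hemispaces are convex by definition.

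First, suppose $\cH\subseteq\Rmax^n$ is an open hemispace. Then $\complement \cH$ is closed. Since $\cH$ is a hemispace, $\complement \cH$ is convex, and since $\complement(\complement \cH)=\cH$ is convex too, $\complement \cH$ is a closed hemispace. Applying Corollary~\ref{c:hemishalfs} to $\complement \cH$, we obtain that $\complement \cH$ is a closed halfspace of the form~\eqref{e:halfs1}, whence $\cH$ is the complement of such a set, i.e., an open halfspace (defined, by natural analogy with the closed case, as the complement in $\Rmax^n$ of a closed halfspace of the form~\eqref{e:halfs1}).

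Conversely, if $\cH$ is an open halfspace, then by definition $\complement \cH$ is a closed halfspace of the form~\eqref{e:halfs1}, and hence by Corollary~\ref{c:hemishalfs} it is a closed hemispace. Thus both $\complement \cH$ and $\cH$ are convex, so $\cH$ is a hemispace, and $\cH$ is open since it is the complement of a closed set.

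There is essentially no obstacle beyond fixing the definition of an ``open halfspace'' as the set-theoretic complement of a closed halfspace~\eqref{e:halfs1}; once this is agreed upon, the corollary follows by a one-line duality from Corollary~\ref{c:hemishalfs} and the basic observation that taking complements swaps ``open'' with ``closed'' while preserving the hemispace property.
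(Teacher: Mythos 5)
Your proposal is correct and is essentially the paper's own argument: the paper's proof is the one-line observation that open hemispaces and open halfspaces are obtained as complements of their closed ``partners'', which is exactly the complementation duality you spell out. Your only addition is making explicit the (reasonable, implicitly used) convention that an open halfspace is the complement of a closed halfspace of the form~\eqref{e:halfs1}.
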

\begin{proof}
Open hemispaces and open halfspaces can be obtained
as complements of their closed ``partner''.
\end{proof}

\subsection{Characterization of hemispaces by means of $(P,R)$-decompositions}\label{ss:mainres}

We now characterize hemispaces by means of $(P,R)$-decompositions,
as foreseen by Theorem~\ref{t:hemihomog} and Theorem~\ref{t:Ghemi}.

\begin{theorem}\label{t:mainres}
 Let $\cH$ be a non-empty proper convex subset of $\Rmax^n$. Then, $\cH$ is a hemispace
if and only if there exist non-empty disjoint sets $I$ and $J$ satisfying $I+J=[n+1]$
and $n+1\in I$, and sets $\sigma_{ij}^{(-)}$,
which are non-empty proper subsets of $\Rmax\cup \{+\infty\}$ either of the form
$\{\lambda \in \Rmax \mid \lambda \leq\sigma_{ij}\}$ or
$\{\lambda \in \Rmax \mid \lambda <\sigma_{ij}\}$ with
$\sigma_{ij}\in \Rmax\cup \{+\infty\}$,
such that the pairs $(\sigma^{(-)}_{ij},\sigma^{(+)}_{ij})$,
with $\sigma^{(+)}_{ij}$ defined by $\sigma^{(+)}_{ij}:=(\Rmax\cup \{+\infty\})\setminus \sigma^{(-)}_{ij}$, satisfy the rank-one condition~\eqref{e:rankone} and
\begin{equation}\label{hemi-grrep1}
\cH=\conv \left( \left\{\lambda \uvector{j}\mid j\in J, \lambda \in\sigma_{n+1,j}^{(-)}\right\} \right) \oplus \cone \left( \left\{\uvector{i}\oplus \lambda \uvector{j}\mid i\in I\setminus \{n+1\}, j\in J, \lambda \in\sigma_{ij}^{(-)}\right\} \right)
\end{equation}
if $\bzero \in \cH$, and
\begin{equation}\label{hemi-grrep2}
\cH=\conv \left( \left\{ \lambda \uvector{j}\mid j\in J, \lambda \neq +\infty, \lambda \in\sigma_{n+1,j}^{(+)}\right\} \right) \oplus\cone \left( \left\{ \uvector{i}\oplus \lambda \uvector{j} \mid i\in I\setminus \{n+1\}, j\in J, \lambda \in \sigma_{ij}^{(+)}\right\} \right)
\end{equation}
otherwise.
Moreover, if $\cH$ is a hemispace given by
the right-hand side of~\eqref{hemi-grrep1}, then $\complement\cH$
is given by the right-hand side of~\eqref{hemi-grrep2}, and vice versa.
\end{theorem}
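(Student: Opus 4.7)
The plan is to derive this characterization as a corollary of Theorems~\ref{t:hemihomog} and~\ref{t:Ghemi}, using the sectioning machinery of Proposition~\ref{p:gen-hom} and Corollary~\ref{CoroOfp:gen-hom}. For the ``only if'' direction, I would start by applying Theorem~\ref{t:hemihomog} to $\cH$ and its complement, producing a joined pair $(\cV_1,\cV_2)$ of conical hemispaces in $\Rmax^{n+1}$ with $\cH=C_{\cV_1}^{\bunity}$. The condition $\bzero\in\cH$ is equivalent to $\uvector{n+1}\in\cV_1$, which dictates whether Theorem~\ref{t:Ghemi} should be applied directly to $\cV_1$ (giving $n+1\in I$ in the resulting parameterization) or to $\cV_2$ (after which Proposition~\ref{p:complement} yields the $\sigma^{(+)}$-type generators for $\cV_1$). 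In both cases, the rank-one condition on the $\sigma_{ij}^{(-)}$'s is inherited directly from Theorem~\ref{t:Ghemi}.

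With the generators of $\cV_1$ in hand, I would section at $x_{n+1}=\bunity$ via Proposition~\ref{p:gen-hom}. A generator $\uvector{i}\oplus\lambda\uvector{j}$ of $\cV_1\subseteq\Rmax^{n+1}$ has non-zero $(n+1)$-th coordinate precisely when $i=n+1$ and $\lambda\neq+\infty$; in that case it contributes $\lambda\uvector{j}$ to the set $P$, and otherwise (including the case $i=n+1,\lambda=+\infty$, for which $\uvector{n+1}\oplus(+\infty)\uvector{j}=\uvector{j}$ by~\eqref{newformula-1008}) it contributes to the set $R$. In Case 1 ($\bzero\in\cH$) every $\sigma_{ij}^{(-)}$ avoids $+\infty$, so the generators split cleanly into the sets listed in~\eqref{hemi-grrep1}. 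In Case 2 ($\bzero\notin\cH$) the literal sectioning additionally puts $\uvector{j}$ into $R$ for each $j\in J$ (coming from $i=n+1,\lambda=+\infty$); these vectors are already in the $R$ of~\eqref{hemi-grrep2} via $\uvector{i}\oplus(+\infty)\uvector{j}=\uvector{j}$ for any $i\in I\setminus\{n+1\}$, and in the degenerate subcase $I=\{n+1\}$ they turn out to be redundant because $\cV_2$ is then generated entirely by vectors with non-zero last coordinate, forcing $\cH=\conv(P)$ on its own.

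For the ``if'' direction, I would feed the generators of~\eqref{hemi-grrep1} or~\eqref{hemi-grrep2} into Corollary~\ref{CoroOfp:gen-hom} to build a cone $\cV_1\subseteq\Rmax^{n+1}$ with $\cH=C_{\cV_1}^{\bunity}$, then appeal to the ``if'' part of Theorem~\ref{t:Ghemi} to certify that $\cV_1$ (in Case 1) or its complementary cone $\cV_2$ built from the $\sigma^{(-)}$ family (in Case 2, invoking Proposition~\ref{p:complement}) is a conical hemispace. Theorem~\ref{t:hemihomog} then gives $\cH$ as a hemispace. The ``moreover'' clause comes for free: starting from the $\sigma^{(-)}$-decomposition~\eqref{hemi-grrep1} of $\cH$, Proposition~\ref{p:complement} produces the $\sigma^{(+)}$ description of the partner conical hemispace in $\Rmax^{n+1}$, and sectioning that partner at $x_{n+1}=\bunity$ is precisely the Case 2 computation that yields~\eqref{hemi-grrep2} for $\complement\cH$.

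The main difficulty is the careful accounting in Case 2, where the completed-semifield convention $\uvector{i}\oplus(+\infty)\uvector{j}=\uvector{j}$ creates a mismatch between the literal list of generators coming from Proposition~\ref{p:complement} and the trimmed list stated in~\eqref{hemi-grrep2}. The subcase $I=\{n+1\}$ is especially delicate: here the cone part of~\eqref{hemi-grrep2} is empty, and one must verify directly that $\cH=\conv(P)$, which reduces to checking that $\conv(P)$ is closed under tropical addition of arbitrary $\mu\uvector{j}$ for $j\in J$. This closure follows from the structure of $P$: every element of $\conv(P)$ has some coordinate $j'$ strictly above $\sigma_{n+1,j'}$ (from the $\nu_{k_0}=\bunity$ constraint in the convex-combination definition), and that ``dominant'' coordinate can absorb the weight needed to represent any perturbation in the $J$-directions as a valid tropical convex combination of elements of $P$.
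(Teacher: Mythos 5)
Your overall route is the paper's route: Theorem~\ref{t:hemihomog} plus the ``only if'' part of Theorem~\ref{t:Ghemi} and Proposition~\ref{p:complement} for necessity, and Corollary~\ref{CoroOfp:gen-hom}, the ``if'' part of Theorem~\ref{t:Ghemi} and Proposition~\ref{p:complement} for sufficiency, with Proposition~\ref{p:gen-hom} doing the sectioning in both directions. (One small slip: the step ``sections of a joined pair of conical hemispaces form a complementary pair of hemispaces'' is Lemma~\ref{l:CharaHemiWithSections}, not Theorem~\ref{t:hemihomog}, which only goes the other way.) Your accounting of which generators land in $P$ versus $R$ under the convention~\eqref{newformula-1008} is correct, and you rightly notice that the only delicate point is Case~2 with $I=\{n+1\}$, where the literal section of $\cV_2$ is $\conv(P)\oplus\cone(\{\uvector{j}\mid j\in J\})$ while~\eqref{hemi-grrep2} has an empty cone part.

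However, your resolution of that subcase has a genuine hole. First, the assertion that ``$\cV_2$ is then generated entirely by vectors with non-zero last coordinate'' is false: the generators $\uvector{n+1}\oplus(+\infty)\uvector{j}=\uvector{j}$ have last coordinate $\bzero$ and are not redundant as generators of the cone $\cV_2$. Second, and more importantly, the absorption argument $\conv(P)\oplus\cone(\{\uvector{j}\mid j\in J\})=\conv(P)$ breaks down precisely when $\sigma_{n+1,j}^{(+)}=\{+\infty\}$ (equivalently $\sigma_{n+1,j}^{(-)}=\Rmax$) for some $j\in J$: then $P$ contains no vector supported on coordinate $j$, so no element of $\conv(P)$ has a non-null $j$-th coordinate, whereas $\conv(P)\oplus\cone(\{\uvector{j}\})$ does. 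A concrete instance with $n=2$, $I=\{3\}$, $\sigma_{31}^{(-)}=\Rmax$, $\sigma_{32}^{(-)}=\{\lambda\leq\bunity\}$ satisfies the rank-one condition, and there $C_{\cV_2}^{\bunity}=\{x\mid x_2>\bunity\}$ while the right-hand side of~\eqref{hemi-grrep2} is only $\{x\mid x_1=\bzero,\,x_2>\bunity\}$. Your ``dominant coordinate'' only certifies that the coefficient constraint $\mysup\nu=\bunity$ can be met; it does nothing for a direction $k$ in which $P$ has no element at all. To close the gap you must show that this situation cannot arise for the canonical parameterization produced by the necessity argument: when $I=\{n+1\}$, each $\uvector{j}\in\cV_2$ forces some sector $\Sector_j(y)\subseteq\cH$, hence $y_j\uvector{j}\in\cH$ with $y_j$ finite, hence $\sigma_{n+1,j}^{(+)}\setminus\{+\infty\}\neq\emptyset$ for every $j\in J$; only then does your absorption argument go through. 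Note that this point also affects the ``if'' direction and the ``moreover'' clause for non-canonical parameterizations with $I=\{n+1\}$ and some $\sigma_{n+1,j}^{(-)}=\Rmax$, which is an issue the paper's own terse proof does not address either.
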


\begin{proof}
Sufficiency: Consider the cones
\begin{equation}\label{v1v2-grep}
\begin{split}
\cV_1 = \cone \left( \left\{ \uvector{n+1}\oplus \lambda \uvector{j}\mid j\in J, \lambda \in\sigma_{n+1,j}^{(-)}\right\} \right) \oplus
\cone \left( \left\{ \uvector{i}\oplus \lambda \uvector{j}\mid i\in I\setminus \{n+1\},
j\in J, \lambda \in\sigma_{ij}^{(-)}\right\} \right) \; ,\\
\cV_2 = \cone \left( \left\{ \uvector{n+1}\oplus \lambda \uvector{j}\mid j\in J,  \lambda \in\sigma_{n+1,j}^{(+)}\right\} \right) \oplus\cone \left( \left\{ \uvector{i}\oplus \lambda \uvector{j} \mid i\in I\setminus \{n+1\},
j\in J, \lambda \in\sigma_{ij}^{(+)}\right\} \right) \; .
\end{split}
\end{equation}
By Theorem~\ref{t:Ghemi} (the ``if'' part), $\cV_1$ is a conical
hemispace. Further, by Proposition~\ref{p:complement}, $\cV_1$ and
$\cV_2$ form a joined pair of conical hemispaces.
Then, from Lemma~\ref{l:CharaHemiWithSections} it follows
that $C_{\cV_1}^{\bunity}$ and $C_{\cV_2}^{\bunity}$ form a
complementary pair of hemispaces. Besides, by
Proposition~\ref{p:gen-hom} we have $\cH=C_{\cV_1}^{\bunity}$ if
$\bzero \in \cH$ and $\cH=C_{\cV_2}^{\bunity}$ otherwise. Thus,
$\cH$ is a hemispace.

Necessity: If $\cH$ is a hemispace, then $(\cH,\complement \cH )$ is
a non-trivial complementary pair of hemispaces. By
Theorem~\ref{t:hemihomog}, $\cH$ and $\complement \cH$ can be
represented as sections of some conical hemispaces $\cV_1$ and
$\cV_2$, which form a joined pair of conical hemispaces. Since
$(\cH,\complement \cH )$ is non-trivial, it follows that
$(\cV_1,\cV_2)$ is also non-trivial. By
Theorem~\ref{t:Ghemi} (the ``only if'' part) and Proposition~\ref{p:complement},
$\cV_1$ and $\cV_2$ must be as in~\eqref{v1v2-grep}. Then, since
$\uvector{n+1}\in \cV_1$, we have $\cH=C_{\cV_1}^{\bunity}$ if
$\bzero \in \cH$ and  $\cH=C_{\cV_2}^{\bunity}$ otherwise.
Consequently, using Proposition~\ref{p:gen-hom}, we see that
$\cH$ has a $(P,R)$-decomposition as in~\eqref{hemi-grrep1} and its
complement as in~\eqref{hemi-grrep2} if $\bzero \in \cH$.
Similarly, $\cH$ has a $(P,R)$-decomposition as
in~\eqref{hemi-grrep2} and its complement as
in~\eqref{hemi-grrep1} if $\bzero\notin\cH$.
\end{proof}

\section*{Acknowledgement} We are grateful to Ivan Singer for very careful reading and numerous
suggestions aimed at improving the clarity of presentation and polishing the proofs.
We also thank Charles Horvath for
useful discussions, and, together with Walter Briec, for sending the
full text of their work~\cite{BH-08}.



\end{document}